\documentclass[11pt,a4paper]{article}

\usepackage[english]{babel}
\usepackage[utf8]{inputenc}
\usepackage[OT1]{fontenc}
\usepackage{titlesec}
\usepackage{csquotes}

\usepackage[backend=biber, maxnames=50, style=alphabetic]{biblatex}
\usepackage[right=3cm, a4paper]{geometry}

\usepackage{amsmath}
\usepackage{amssymb}	

\usepackage{comment}
\usepackage{mathtools}
\usepackage{tikz-cd}
\usepackage{float}
\usepackage{graphicx}
\usepackage{xcolor}
\usepackage{quiver}

\usepackage[colorlinks=true, allcolors=blue]{hyperref}
\usepackage{enumitem}
\usepackage{svg}
\usepackage{scalerel}
\usepackage{overpic}
\usepackage{multicol}
\usepackage{framed}
\usepackage[font=small, labelfont=it, textfont=it]{caption}
\usepackage[font=small]{subcaption}

\usepackage{pdfpages}

\usepackage[dvipsnames]{xcolor}

\newcommand{\tr}{\operatorname{tr}}

\newcommand{\g}{\ensuremath{\mathfrak{g}}}
\newcommand{\h}{\ensuremath{\mathfrak{h}}}

\newcommand{\Ca}{\ensuremath{\mathcal{C}}}

\newcommand{\Ad}{\ensuremath{\operatorname{Ad}}}
\newcommand{\Sym}{\ensuremath{\operatorname{Sym}}}
\newcommand{\Flip}{\ensuremath{\operatorname{Flip}}}

\newcommand{\Hom}{\ensuremath{\operatorname{Hom}}}

\newcommand{\Umod}[2][\Phi]{U(#2)\text{-}\mathrm{mod}_\hbar^{#1}} 
\newcommand{\Umodnp}[2][\Phi]{U{#2}\text{-}\mathrm{mod}_\hbar^{#1}} 

\usepackage{amsthm}
\newcounter{dummy} \numberwithin{dummy}{section}
\newtheorem{theorem}[dummy]{Theorem}
\newtheorem{proposition}[dummy]{Proposition}
\newtheorem{corollary}[dummy]{Corollary}

\theoremstyle{definition}
\newtheorem{definition}[dummy]{Definition} 
\newtheorem{remark}[dummy]{Remark} 
\newtheorem{example}[dummy]{Example}

\begin{document}

\title{Pentagon Equation in Deformation Quantization of Character Varieties}
\author{Ján Pulmann \\ Mathematical Institute, Charles University of Prague \\  Institut für Mathematik, Humboldt-Universität zu Berlin}
\date{}

\maketitle

\begin{abstract}
Deformation quantizations of the moduli space of framed flat bundles live naturally in the Drinfeld category. Such quantizations were constructed using the so-called quantum fusion by Li-Bland and Ševera. We give a complete characterization of the dependence of this quantization on combinatorial choices, and prove that these quantizations are all isomorphic, forming vertices of an explicitly constructed complete commuting graph of algebra isomorphism. To this end, we extend the Kontsevich integral methods of Le and Murakami from even associators to all associators. Finally, we also prove that bi-invariant Haar measures provide traces on these quantizations.  
\end{abstract}

\tableofcontents

\section{Introduction}
The phase space of Chern--Simons theory, i.e.\ the moduli space of flat connections on a surface, has a natural Poisson structure, due to Atiyah--Bott \cite{AtiyahBott} and Goldman \cite{Goldman86}. This Poisson structure has been quantized by many authors \cite{AGS95, Andersen1998, RocheSzenes02, LBS:DQ, BZBJ:integrating}. In this work, we study the deformation quantization of Li-Bland and Ševera \cite{LBS:DQ, LBS:coherent}, which goes through quasi-Poisson geometry of framed moduli spaces \cite{AKSM, LBS:poisson}. This quantization is defined in a straightforward way, gluing the moduli space from simple building blocks (framed moduli spaces on a disk) using a quantum version of the quasi-Poisson fusion. As a consequence, the quantization depends on a decomposition of the surface into disks.

We completely characterize the dependence of this quantization on this decomposition.  We get one star product $\star_\Gamma$ for each decomposition of the surface and a \emph{explicit} isomorphism  $\Psi_{\Gamma \to \Gamma'}$ between these star products such that the following diagrams commute
\begin{equation} \label{diag:coherent-quantizations}\begin{tikzcd}
	{\star_\Gamma} && {\star_{\Gamma''}} \\
	& {\star_{\Gamma'}}
	\arrow["{\Psi_{\Gamma \to \Gamma''}}", from=1-1, to=1-3]
	\arrow["{\Psi_{\Gamma \to \Gamma'}}"', from=1-1, to=2-2]
	\arrow["{\Psi_{\Gamma' \to \Gamma''}}"', from=2-2, to=1-3]
\end{tikzcd}.\end{equation}
The isomorphisms $\Psi$ are constructed as follows. The \emph{flip}, the elementary move between surface decompositions, is implemented by the action of the Drinfeld associator. In the \emph{Ptolemy} groupoid of decompositions, where arrows are generated by flips, the relations are given by a pentagon equation. Drinfeld associators satisfy a corresponding pentagon equation. Therefore, we conclude that the star products form an honest representation of the Ptolemy groupoid in the category of associative algebras in Drinfeld's braided monoidal category $\Umodnp{\g}$ \cite{DrinfeldGal, Cartier}. This provides an explicit connection between these two pentagon equations. These results also apply to moduli spaces of unframed flat bundles, i.e. to character varieties, by taking $G$-invariants.

Independence of star products on triangulations is easy to conclude when using the formalism of factorization homology to construct these quantizations. Indeed, in my work with Karlsson, Keller and M\"uller \cite{KKMP}, we relate the quantization of Li-Bland and Ševera to the deformation quantization coming from factorization homology of \cite{BZBJ:integrating}, giving an abstract proof of the triangulation independence. Thus, the main result of the present paper can be seen as an alternative, explicit proof of this result \cite{KKMP}, giving explicit isomorphisms using the Kontsevich integral. 
\smallskip

To give such explicit isomorphisms, we extend to general associators the Kontsevich integral techniques developed by Le and Murakami in \cite{LeMurakami:universal, LeMurakami:parallel} for even associators. An important technical step is relating the operation which multiplies each chord with $-1$ to the reflection around a horizontal axis: this allows us to simplify the diagrams appearing in \cite{LBS:DQ} quantization. Such operations appeared in the non-perturbative setting in $K_{\mathfrak{q}}$ algebras of \cite{Gaiotto:Kq-algebras}.

We believe that the methods we use here, namely Drinfeld's quantization of infinitesimally braided monoidal categories \cite{DrinfeldGal, Cartier, BarNatanOnAssociators} and the Kontsevich integral \cite{KontsevichVassiliev, BarNatan:vassiliev, LeMurakami:parallel, LeMurakami:universal}, are highly effective \emph{computational} tools for quantizations of moduli spaces of flat connections. We illustrate this by giving an explicit twist relating the associators $\Phi(X, Y)$ and $\Phi(-X, -Y)$ and constructing  traces on \cite{LBS:DQ} quantizations. It would be interesting to interpret these results back in terms of braided monoidal categories and factorization homology.

\subsection{Acknowledgement}
This work is an extension of an unpublished part of my Ph.D. thesis \cite{Jan:thesis}, where the main result appeared. This article is written as an independent text; my thesis is referenced in a couple of places for additional results not needed for the main argument. 

I would like to thank my Ph.D. advisor Pavol Ševera for his guidance and many discussions which led to this work.  I would like to thank Eilind Karlsson, Corina Keller and Lukas Müller, as our joint work shaped the current paper. Additionally, I would like to thank Anton Alekseev, Dror Bar-Natan, Adrien Brochier, Thang Le, Florian Naef, Nikolai Perry, Pavel Safronov and Andras Szenes for numerous discussions at various stages of this project. I would like to acknowledge the support of the GAČR grant PIF-25-17640I.

\paragraph{AI use:} After writing the first draft, I had Claude Opus 5.5 generate a ``pre-referee report'' and implemented some of its suggestions. These amounted to mild mathematical corrections and typos. The report is available in the arxiv source code. In addition to this, I used Claude to find the references \cite{BarNatan:braids, Lusztig:QG, Gaitsgory:KZ}.

\section{Background}
\subsection{Quasi-Poisson geometry of moduli spaces of flat connections}

The relevant mathematical ingredients we need from Chern--Simons theory to construct the phase spaces are a connected Lie group $G$ with a \emph{Casimir} Lie algebra $\g$ and a surface $\Sigma$ with boundary. The ``phase space'' will be (a variant of) the moduli space of flat connections on $\Sigma$.

We can identify flat $G$-bundles on $\Sigma$ with equivalence classes of group homomorphisms 
\[  M_{\Sigma}(G) \coloneq \Hom_\text{Grp}(\pi_1(\Sigma), G)/G\,,\] where two morphisms are identified if they differ by conjugation via $g\in G$. These moduli spaces are usually not smooth, for example the moduli space of flat bundles over the annulus is the set of conjugacy classes of $G$. 

If $\Sigma$ is connected and has non-empty boundary, then the  \emph{framed} moduli space 
\[\Hom_\text{Grp}(\pi_1(\Sigma), G)\] is isomorphic to a product of $G$'s. An invariant bivector on this space, which descends to the Atiyah--Bott--Goldman Poisson bracket on $M_{\Sigma}(G)$, was constructed by Alekseev, Kosmann-Schwarzbach, Malkin and Meinrenken \cite{AMM, AKS, AKSM} and generalized by Li-Bland and Ševera in \cite{LBS:poisson, LBS:symplectic-poisson}. We will now recall this construction. 

The \emph{framed moduli spaces} we will study are defined as the space of isomorphism classes of principal $G$-bundles\footnote{Note that, since $G$ is connected and $\Sigma$ has boundary, all these principal bundles are trivial.} over $\Sigma$ equipped with a flat connection and a chosen trivialization of fibers over $v \in V\subset \partial \Sigma$, called framing. They are taken modulo bundle isomorphisms over the identity $\Sigma \to \Sigma$ which relate the flat connections and trivializations. Via Riemann-Hilbert correspondence, we can represent each such class in terms of holonomies as follows \cite[Sec.~6]{LBS:poisson}.
\begin{definition}
Let $(\Sigma, V)$ be a \emph{marked surface}, i.e.\ a surface $\Sigma$ and $V\subset \partial \Sigma$ a finite set meeting each component\footnote{but not necessarily each component of $\partial \Sigma$} of $\Sigma$. The fundamental groupoid $\Pi_1(\Sigma, V)$ is the groupoid with the set $V$ of objects and homotopy classes of paths on $\Sigma$ as morphisms. Let $G$ be a connected Lie group. The \emph{moduli space of flat principal $G$-bundles framed at $V$} is the space of groupoid morphisms
\begin{equation}
	M_{\Sigma, V}(G) \coloneqq \Hom_\text{Grpd}(\Pi_1(\Sigma, V), BG).
\end{equation} 
Here, $BG$ is the groupoid with one object and $G$ worth of morphisms.
\end{definition}
For $\Sigma$ connected with genus $g$ and $n$ boundary components, one can identify \[M_{\Sigma, V}(G)\cong G^{2g+n-2 + |V|}\] by choosing a generating set of edges for $\Pi_1(\Sigma, V)$, e.g.\ a skeleton. We review such parametrizations of the moduli space in the next section.
\smallskip

In order to define the bivector field on the moduli space, the Lie algebra $\g$ of $G$ should come with a Casimir element $t$ in the following sense.

\begin{definition}
A \emph{Casimir Lie algebra} is a Lie algebra $\g$ together with an invariant symmetric $2$-tensor $t \in (\Sym^2{\g})^\g$. The corresponding Cartan element $\phi \in (\wedge^3 \g)^\g$ is defined as\footnote{Going between polyvectors and elements of $\g^{\otimes{k}}$ requires choosing the normalization of the wedge product, or of the embedding $\wedge^k \g \to \g^{\otimes k}$. We use the unnormalized antisymmetrization.}    \begin{equation*}
    	\phi = -\tfrac {1}{24} [t_{1,2}, t_{2,3}] \in  \wedge^3 \g \subset (U\g)^{\otimes 3},
    \end{equation*}
where $t_{1,2} = t \otimes 1 \in (U\g)^{\otimes 3}$ and $t_{2,3} = 1 \otimes t \in (U\g)^{\otimes 3}$.

    If $(\g, t)$ is a fixed Casimir Lie algebra, we denote by $\bar\g$ the Casimir Lie algebra $(\g, -t)$. 
\end{definition}
Semisimple Lie algebras, and more generally Lie algebras equipped with a non-degenerate symmetric invariant pairing, give examples of Casimir Lie algebras. An abelian Lie algebra is Casimir for any symmetric $2$-tensor.

The bivector field on the framed moduli space of flat connections will not be Poisson, but instead quasi-Poisson in the following sense \cite{AKS, AKSM}.
\begin{definition}	
	Let $G$ be a Lie group with a Casimir Lie algebra. A $G$-quasi-Poisson manifold is a $G$-manifold $M$ equipped with a $G$-invariant bivector $\pi$ which satisfies
	\begin{equation} \label{eq:quasi-Poisson-Jacobi} [\pi, \pi]/2 =  \phi_M  \in \Gamma (\wedge^3 TM), \end{equation}
	where the right hand side is the action of the Cartan element $\phi\in \wedge^3 \g$ on the $G$-manifold $M$.
	\smallskip
	
	If $(M, \pi)$ is a $G\times G \times H$-quasi-Poisson manifold, then the \emph{fusion} \cite[]{AKSM} of $M$ is the $G\times H$-quasi-Poisson manifold $(M, \pi_\text{f})$ where $G$ acts diagonally, and the bivector $\pi_\text{f}$ is equal to
	\begin{equation}\label{eq:fusion-of-bivectors} \pi_\text{f} = \pi - \tfrac 12 t^{ij} (e_i)_1 \wedge (e_j)_2. \end{equation}
	Here $e_i$ is a basis of $\g$, $t = t^{ij} e_i \otimes e_j$ and the subscripts $1, 2$ denote the two $G$ actions on $M$.
\end{definition}

In terms of functions, the defining equation \eqref{eq:quasi-Poisson-Jacobi} says that 
\begin{equation}\label{eq:quasi-Poisson-Jacobi-on-functions}
  \{ \{f, g\} , h \} + \text{ cycl. } = - \tfrac 14 f^{ijk} e_i(f) e_j (g) e_k(h),
\end{equation}
where $f^{ijk} = f^i_{ab} t^{aj} t^{bk}$ are the structure constants of $\g$ with indices lifted via $t$. The fusion term in \eqref{eq:fusion-of-bivectors} is equal to
\begin{equation} \label{eq:fusion-on-functions} -\tfrac 12 t^{ij} \left[(e_i)_1(f) (e_j)_2 (g) - (e_i)_1(g) (e_j)_2(f)\right]. \end{equation}

\begin{remark}[Terminology]
	The unframed moduli space is often called the \emph{character variety} (since functions on it are conjugation invariant), while the framed moduli space is called the \emph{representation variety} (see e.g. \cite{BZBJ:integrating}). In terms of the category of quasicoherent sheaves on the character \emph{stack}, the functions on the character variety are given by the endomorphism algebra of the structure sheaf, while the functions on the representation variety are given by the \emph{internal} endomorphism algebra of the structure sheaf \cite[Def.~3.9]{BZBJ:integrating}. These two algebras (and their quantizations) live in different categories, which accommodates a relaxation of the Jacobi condition \eqref{eq:quasi-Poisson-Jacobi}.
\end{remark}

The group $G^V \cong G^{\times |V|}$ acts on the moduli space $M_{\Sigma, V}(G)$ by shifting the framing. In terms of holonomies, this is \[
((g_v)_{v\in V} \triangleright \rho ) (\gamma) = g_{t{(\gamma)}}\rho(\gamma) g_{s{(\gamma)}}^{-1}, \quad \text{ i.e.\ } \quad  \begin{overpic}{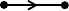} \put(-10, 25){$R_{g^{-1}}$} \put(86, 25){$L_{g}$}\end{overpic}, \]
where $\rho\colon \Pi_1(\Sigma, V) \to G$ is the holonomy of a flat connection and  $\gamma$ is a homotopy class of paths from $s(\gamma)$ to $t(\gamma)$. 
\begin{theorem}[{\cite{FockRosly, AMM, AKSM, LBS:poisson}}]
	There exists a unique family of $G^V$-quasi-Poisson bivectors on $M_{\Sigma, V}(G)$, for each $(\Sigma, V)$, satisfying:
	\begin{itemize}
		\item $\pi=0$ on the disk with two marked points on the boundary,
		\item if we locally modify the surface by joining vertices $v, v' \in V$ as follows
		\[  \begin{overpic}{pic/fusion-of-surfaces} \put(47, 15){$\mapsto$} 
		\put(10, 15){$v$}
		\put(18, 15){$v'$}
		\put(81.3, 26){$v''$}
		\end{overpic}  \]
		then the bivector field changes by the fusion term \eqref{eq:fusion-of-bivectors}, with the two $G$ actions coming at the action at $v$ and $v'$. The position of the new point $v''$ is determined by the orientation of the surface, which is taken to be counterclockwise on the above picture.		
	\end{itemize}
	Moreover, the induced \emph{Poisson} bracket on $G^V$-invariant functions on $M_{\Sigma, V}(G)$ agrees with the Atiyah--Bott--Goldman Poisson bracket.
\end{theorem}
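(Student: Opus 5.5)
We prove the theorem in three steps: existence and uniqueness by building each marked surface from disks with two marked points, well-definedness by checking that the fusion terms do not depend on the gluing order, and the comparison with Atiyah--Bott--Goldman by a computation on holonomy functions.

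\textbf{Reduction to elementary surfaces.} Every connected marked surface $(\Sigma, V)$ can be obtained from a disjoint union of disks with two marked points by iterating the local joining move of the statement. Cutting $\Sigma$ along arcs ending at points of $V$ decomposes it into such disks, and reversing the cuts gives a sequence of joinings. For a disjoint union the bivector is the sum of the bivectors of the pieces, which here is zero, since the moduli space is a product. Each joining changes $\pi$ by the explicit fusion term \eqref{eq:fusion-of-bivectors}. Hence the bivector is forced: it equals the sum of the fusion terms along the chosen sequence, and uniqueness follows.

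\textbf{Existence.} Two things must be checked.
\begin{enumerate}
\item Fusion preserves the quasi-Poisson property. This is the standard computation of \cite{AKSM}. One expands $[\pi_\text{f},\pi_\text{f}]/2$ using $G$-invariance of $\pi$ (so $[\pi, (e_i)_1\wedge(e_j)_2]=0$) and the identity
\[
[t_{12}\text{-term}, t_{12}\text{-term}]/2 = \phi_1+\phi_2-\phi_{\mathrm{diag}}\,,
\]
coming from the definition $\phi=-\tfrac1{24}[t_{12},t_{23}]$.
\item The result does not depend on the sequence of joinings. This is the main obstacle. Two decompositions of $\Sigma$ into disks are related by elementary moves: commuting two joinings at disjoint vertex pairs, and a local move near a common vertex relating the orders $(vv')v''$ and $v(v'v'')$. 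In the first case the fusion terms involve disjoint actions, and by $G$-invariance they simply add, so order does not matter. For the local move, the total fusion contribution in both orders is
\[
-\tfrac12\bigl(t_{12}+t_{13}+t_{23}\bigr)\text{-type wedges}\,,
\]
where the cyclic position of the new point, fixed by the orientation, makes the signs agree. One must also handle joinings of two points on the same disk, which create genus or extra boundary. Here we would instead appeal to the equivalent description of \cite{LBS:poisson}: the bivector is given by a formula local at each $v\in V$, depending only on the cyclic order of germs of edges at $v$. That description is manifestly independent of choices, and one then verifies that it changes by the fusion term under joining.
\end{enumerate}

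\textbf{Comparison with Atiyah--Bott--Goldman.} On $G^V$-invariant functions, the fusion corrections pair the $\g$-actions, so the bracket descends to a bracket on the invariant functions, and the right-hand side of \eqref{eq:quasi-Poisson-Jacobi-on-functions} vanishes there. This gives a Poisson bracket. To identify it, we evaluate on trace functions $f_\gamma = \tr_\rho \circ \mathrm{hol}_\gamma$ of loops $\gamma,\delta$ in general position. Each fusion term contributes only at points of $V$ where the two loops pass, and contracting the resulting $t$-insertions produces Goldman's formula
\[
\{f_\gamma,f_\delta\}=\sum_{p\in\gamma\cap\delta}\varepsilon_p\, f_{\gamma_p\delta_p}\,,
\]
with $t$ inserted at $p$. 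Because these functions generate the invariant functions, the two brackets coincide, as in \cite{AMM, LBS:poisson}.
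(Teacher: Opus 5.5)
The paper gives no proof of this statement. It is quoted from \cite{FockRosly, AMM, AKSM, LBS:poisson}, and the paper only uses its consequence that $\pi$ is independent of the decomposition into disks. Judged on its own, your uniqueness step is fine, granting additivity under disjoint unions, which the statement leaves implicit. Your appeal to \cite{AKSM} for the quasi-Poisson property of fusion is also fine, except for a sign slip. Write $\psi=\tfrac12 t^{ij}(e_i)_1\wedge(e_j)_2$, so that $\pi_{\mathrm{f}}=\pi-\psi$. Since $[\pi,\psi]=0$, $\pi_{\mathrm{f}}$ is quasi-Poisson if and only if $[\psi,\psi]/2=\phi_{\mathrm{diag}}-\phi_1-\phi_2$, which is the negative of your identity.

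\textbf{The gap is in the well-definedness step.} Your two moves are commuting joinings at disjoint pairs and reparenthesizing $(vv')v''$ versus $v(v'v'')$. Neither changes which elementary disks are used or which of their points are glued, so both preserve the skeleton. They only show that, for a fixed skeleton, $\pi$ does not depend on the order of the fusions. That is the easy part, since fusion of bivectors is strictly associative.

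Decompositions with different skeletons are not connected by these moves. Already for the disk with three marked points, consider two options:
\begin{itemize}
\item glue the elementary disks $D_{12}$ and $D_{23}$ at $v_2$;
\item glue $D_{12}$ and $D_{13}$ at $v_1$.
\end{itemize}
Each is a single joining with nothing to reorder. The two resulting identifications of the moduli space with $G^2$ differ by $(x,y)\mapsto(x,yx)$. You must check that the two fusion bivectors correspond under this diffeomorphism. This is the substantive part of the existence claim; a quantum counterpart is the move \eqref{pic:move-two} in the proof of Theorem \ref{thm:unique-quantizations}. It has nothing to do with genus or with joining two points of the same piece, which the fusion formula handles like any other joining.

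Your fallback does not close the gap. The vertex-local formula of \cite{LBS:poisson} is expressed through a skeleton and the linear order of half-edges at each vertex. Its independence of the skeleton is exactly what has to be proved, so calling it manifestly independent of choices is circular.

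To repair the argument you need two things.
\begin{enumerate}
\item Any two skeletons are connected by reorderings and slides. In a slide, the end at $v_2$ of an edge $m$ is moved along an adjacent edge $\ell$ (running from $v_1$ to $v_2$) until it ends at $v_1$.
\item $\pi$ is invariant under a slide. In the new coordinate $z=g_m g_\ell$, the old fundamental vector fields at the ends of $\ell$ and $m$ become combinations of the new ones, some twisted by $\Ad_{g_\ell^{-1}}$. Then $\Ad$-invariance of $t$, together with $t^{ij}(e_i)_z\wedge(e_j)_z=0$ (by symmetry of $t$), turns the old fusion terms at $v_1$ and $v_2$ into the new ones. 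This requires $\ell$ and $m$ to be adjacent at $v_2$.
\end{enumerate}
Alternatively, obtain existence from a construction involving no decomposition at all, for example the gauge-theoretic one behind \cite{AMM}, and then verify the two axioms.

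Finally, your Atiyah--Bott--Goldman paragraph only gestures at its hard step. Loops in general position meet in the interior of $\Sigma$, not at $V$. After pushing them onto the skeleton, they share edges and touch at vertices. You must show that the local contributions at these vertices add up to the oriented intersection numbers in Goldman's formula, with non-transverse contacts cancelling.
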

To get a formula for $\pi$, we have to represent the surface as a gluing of disks. This theorem implies that the bivector $\pi$ depends only on $\Sigma$ and $V$ and not on this combinatorial data.

\subsection{Skeletons and spines}
If we connect the two marked points on the boundary of a disk by a segment, each gluing of the surface from such disks gives us a collection of segments such that $\Sigma$ retracts to the union of these segments. 
\begin{definition}[{see e.g. \cite{LBS:poisson}}]
Let $(\Sigma, V)$ be a marked surface. A skeleton is an isotopy class of embedded oriented graphs $\Gamma_\text{sk}  \hookrightarrow \Sigma$ such that vertices of the graph are mapped bijectively to $V$, and such that $\Sigma$ retracts to the embedded graph. No compatibility of the orientation of $\Gamma_\text{sk}$ and $\Sigma$ is required.
\end{definition}
The following are all skeletons, of five disks, an annulus and a punctured torus, respectively
\[\includegraphics{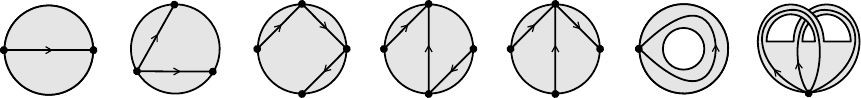}. \]

Each skeleton gives an identification between the moduli space and $G^{\text{edges}(\Gamma)}$. To incorporate information about the order of gluing and allow for taking $G$-invariant function on moduli spaces (the so-called reduction), we will work with a more general parametrization of moduli spaces induced by \emph{spines}.

In order to extend our results to character varieties of punctured surfaces, we allow spines of surfaces without marked points; we still require the boundary of each component of $\Sigma$ to be non-empty.
\begin{definition}
	A spine of a marked surface $(\Sigma, V)$, or of a surface $\Sigma$ with non-empty boundary of each component, is an isotopy class of embedded oriented graphs $\Gamma \hookrightarrow \Sigma$ such that 
	\begin{itemize}
	\item the vertices of $\Gamma$ are split into two disjoint subsets; the internal and external vertices,
	\item the internal vertices are mapped into the interior of $\Sigma$,
	\item the external vertices are mapped bijectively to $V$,
	\item the surface retracts to the embedded graph.
	\end{itemize}
	A uni-trivalent spine is a spine where the external vertices are univalent and the internal vertices are trivalent.
\end{definition}
Here are some examples of uni-trivalent spines.
\[ \includegraphics{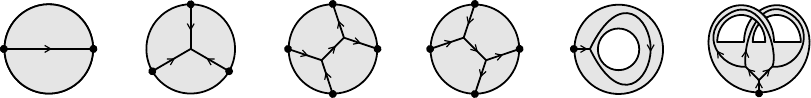} \] 

Each spine $\Gamma$ identifies the moduli space with the set $G^{\text{edges}(\Gamma)}/G^{\text{internal vertices}(\Gamma)}$. If each component of $\Sigma$ contains a marked point, as we assume, this quotient is a Cartesian power of $G$, i.e.\ a smooth manifold. Changing the orientation of an edge takes the corresponding holonomy to its inverse, and exchanges the Lie group actions $R_{g^{-1}}$ and $L_g$; hence each half-edge specifies a $G$-action even without a choice of orientation.\footnote{We could alternatively employ the trick from \cite{FockRosly} and use unoriented graphs; a holonomy along an unoriented edge is a map from the set of two orientations of the edge $\{\leftarrow, \rightarrow \} \to G$ such that the two values are related by taking the inverse.} Because of this, we will often drop the orientations of the edges of $\Gamma$.
\medskip

We will need the following result, due to Penner, which relates uni-trivalent spines via flips. This result also holds for spines without any external vertices.
\begin{theorem}\label{thm:flips}
A flippable edge $e$ is an edge of a uni-trivalent spine which connects two distinct trivalent points. The flip at $e$, denoted $\text{\Flip}(e)$ is the local move on the spine $\Gamma$
\begin{equation}\label{pic:flip}
	\raisebox{-0.4\height}{\begin{overpic}{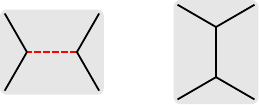}
		\put(50, 18){$\mapsto$}
		\put(19, 22){$e$}
			\put(76, 18){$e'$}	
	\end{overpic}}\;.
\end{equation}

All uni-trivalent spines are connected by a finite sequence of flips. Any two such sequences of flips are related by compositions of relations of the following form.
\smallskip

\textbf{involutivity}: $\Flip(e')\circ \Flip(e) = \textnormal{id}$ where $e'$ is the new edge created by $\Flip(e)$.
\smallskip

\textbf{naturality:} For two flippable edges $e_1, e_2$ which don't share a vertex, \[\Flip(e_1)\circ \Flip(e_2) = \Flip(e_2)\circ \Flip(e_1).\]

\textbf{pentagon:} For two flippable edges that share one common vertex, the following loop of flips is trivial. The red edge specifies the edge along which we flip when reading the circle clockwise.
\begin{equation}
\raisebox{-0.4\height}{\includegraphics{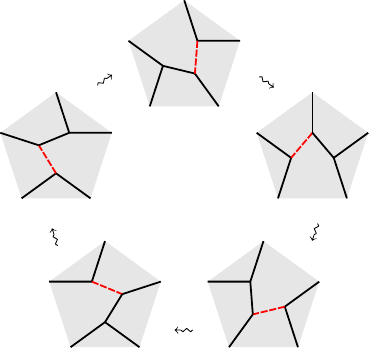}}
\end{equation}

\end{theorem}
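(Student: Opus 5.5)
The plan is to derive the statement from the ribbon graph and Ptolemy groupoid picture, via duality with ideal triangulations.

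\textbf{Duality.} First collapse each boundary component of $\Sigma$ carrying no marked point to a puncture. For each boundary component meeting $V$, cap it off so that the marked points become ``ideal'' points on the boundary. A uni-trivalent spine $\Gamma$ is then dual to an ideal triangulation $T_\Gamma$:
\begin{itemize}
\item each trivalent vertex corresponds to a triangle;
\item each edge between trivalent vertices corresponds to an interior arc;
\item each univalent external edge corresponds to a boundary segment between consecutive marked points.
\end{itemize}
Under this duality, a flip of a flippable edge $e$ (distinct endpoints) becomes the diagonal exchange in the quadrilateral formed by the two adjacent triangles. I will check that the retraction condition on $\Gamma$ corresponds exactly to $T_\Gamma$ being an ideal triangulation with vertex set the punctures and marked points. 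This makes the correspondence between isotopy classes a bijection.

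\textbf{Connectivity.} I plan to invoke the classical result of Harer, Mosher and Penner, via Hatcher's proof, that the flip graph of ideal triangulations is connected. The argument I have in mind goes as follows.
\begin{itemize}
\item Fix a triangulation $T_0$.
\item For any $T$, induct on the geometric intersection number of $T$ with the arcs of $T_0$.
\item If the intersection is positive, some arc of $T$ can be flipped to strictly decrease it. Such an arc exists by an innermost-triangle argument.
\end{itemize}
One subtlety is self-folded triangles, which correspond to loops of the spine or to edges with coinciding endpoints. These are excluded from flips by the ``flippable'' condition, so they must be handled by arguing that the reduction step never needs to flip them.

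\textbf{Relations.} I will realize the flip graph as the 1-skeleton of a simply connected 2-complex whose 2-cells are squares (the naturality relation) and pentagons. The involutivity relation is built in by treating edges as unoriented. The natural candidate is Penner's decorated Teichmüller space cell decomposition, or equivalently the dual of Harer's arc complex. The arc complex is contractible, as shown by Harer, and Hatcher gives a simple flow proof. Its codimension-$\le 2$ skeleton yields the following:
\begin{itemize}
\item vertices are top cells, i.e.\ triangulations;
\item edges are codimension-1 cells, i.e.\ triangulations minus one arc, which are flips;
\item 2-cells are codimension-2 faces.
\end{itemize}
A codimension-2 face removes two arcs. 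If the two arcs bound disjoint quadrilaterals, the link is a square. If they share a triangle, they leave a pentagon whose five triangulations form the pentagon cycle. Since the relevant complex is contractible, its dual 2-skeleton is simply connected, so every loop of flips is a product of these relations.

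\textbf{Main obstacle.} The hardest step is adapting the contractibility statement to our setting. This means handling marked points on the boundary together with punctures, and removing the non-flippable or self-folded configurations while keeping simple connectivity of the restricted complex. I would first try Hatcher's flow argument directly on the arc complex of the marked surface relative to its boundary segments. I would then check that the cells involving self-folded triangles have codimension high enough, or can be retracted away, so that they do not contribute new 2-cells.
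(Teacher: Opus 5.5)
Your treatment of the relations is the paper's proof. The paper takes the complex whose $0$-, $1$- and $2$-cells are uni-trivalent spines, spines with one four-valent vertex, and spines with one five-valent or two four-valent vertices. It identifies this complex with Penner's fatgraph cell decomposition of the decorated Teichm\"uller space $\tilde{T}_*(\Sigma)$ (Decorated Teichm\"uller Theory, Ch.~4, Thm.~5.21), and concludes by contractibility of $\tilde{T}_*(\Sigma)$ (Ch.~2, Thm.~2.27). Connectivity and the relations come out together, and involutivity comes from orienting paths.

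What you call the main obstacle is settled by that citation once the data are chosen correctly: the unmarked boundary components form the punctures $P$, and $Q$ is the set of midpoints of the boundary segments between consecutive marked points. This also corrects your duality dictionary. The ideal vertices of the dual triangulation correspond to the complementary regions of the spine, i.e.\ to these midpoints and to the punctures, not to the marked points. An external edge ending at $v\in V$ is dual to the boundary edge passing through $v$. Your version only becomes consistent after sliding each marked point half a segment along the boundary.

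Several of your detours are unnecessary, and one would not work.
\begin{itemize}
\item The separate intersection-number argument for connectivity is redundant, since a contractible CW complex has connected $1$-skeleton. You also leave its self-folded subtlety unresolved.
\item Self-folded triangles need no special handling. In the fatgraph complex a $1$-cell arises only by collapsing an edge with distinct endpoints, because collapsing a loop changes the homotopy type of the graph. So spines with loops remain vertices of the complex; they simply have no flip along the loop, and nothing has to be removed or retracted.
\item Your fallback, Hatcher's flow on the full arc complex $A$ followed by a codimension count, has a genuine gap. What you need is simple connectivity of the filling part $A\setminus A_\infty$. Up to a contractible factor, this is what Penner's theorem identifies with decorated Teichm\"uller space, and it is the only part on which the dual cell structure makes sense.
\item That statement does not follow from contractibility of $A$. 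Moreover, non-filling faces already occur in codimension one: a triangulation minus the folded arc of a self-folded triangle is a face of exactly one triangulation. So no codimension argument can dispose of them.
\end{itemize}
You should commit to Penner's decomposition rather than the arc complex.
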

\begin{proof}
Let us consider the following cell complex, the so-called \emph{fatgraph complex}. Its zero-cells are uni-trivalent spines. Its 1-cells are uni-trivalent spines  with one four-valent vertex, connecting two spines related by a flip. The 2-cells are spines with one five-valent vertex of two four-valent vertices, etc.. Penner proves in \cite[Chapter~4, Theorem~5.21]{Penner:DTT} that this cell complex comes from a cell decomposition of the Teichm\"uller space $\tilde{T}_*(\Sigma)$. In more detail and using notation of \cite[Chapter~4, Section~5.3]{Penner:DTT}, we take $P$ to contain all of the punctures (boundary components without marked points) of $\Sigma$ and take $Q$ to be equal to the set of midpoints of the boundary segments between vertices of $V$. Because there are no unmarked punctures, we don't have to include Penner's quasiflips. The space $\tilde{T}_*(\Sigma)$ is contractible by \cite[Chapter~2, Theorem~2.27]{Penner:DTT}.

Thus, any uni-trivalent spines can be connected via a path of flips, and any two such paths are connected by the relations as above. The involutivity relation comes from considering oriented paths in this complex.
\end{proof}
The fundamental groupoid of the above cell complex is the so-called Ptolemy groupoid, i.e.\ it has morphisms generated by flips and relations as above. The above theorem tells us that there is a unique morphism connecting any two objects, which are given by uni-trivalent spines. 
\subsection{Kontsevich integral}
The Kontsevich integral is an invariant of tangles with values in the vector space of chord diagrams. These chord diagrams act naturally on representations of Casimir Lie algebras, which is our motivation for their study: the star products we construct will be given by actions of chord diagrams coming from tangles.

\subsubsection{Chord diagrams}
Let us recall the definition of the space of chord diagrams, following \cite{LeMurakami:universal, LeMurakami:parallel}. See also the paper by Bar-Natan \cite{BarNatan:vassiliev} and the textbook of Chmutov, Duzhin and Mostovoy \cite{CDM} for other expositions.
\begin{definition}
	Let $S$ be a disjoint union of oriented segments $I=[0,1]$ and circles $S^1$. A \emph{collection of $n$ chords} is a set of $n$ unordered pairs of points of $S$, all disjoint. Each pair represents a chord connecting these points. Consider the complex vector space of linear combinations of collections of chords on $S$, seen up to isotopy of the endpoints, and the four-term (4T) relation \cite[Fig.~1]{LeMurakami:universal}, \cite[Fig.~1]{BarNatan:vassiliev}
	\begin{equation}
	\label{eq:4T-relations}
	\raisebox{-0.4\height}{\begin{overpic}{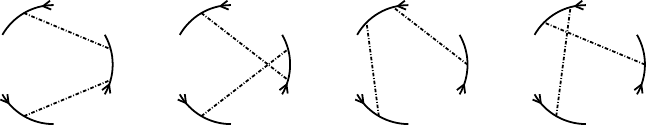}
	\put(21, 9.2){$-$}
	\put(48.6, 9.2){$=$}
	\put(76, 9.2){$-$}
	\end{overpic}}
	\end{equation}
	 This complex vector space is graded by the number of chords. Its completion with respect to this grading is the space of chord diagrams $\mathcal P(S)$.
	\end{definition}
We will always consider the connected components of $S$ to be linearly ordered. The case when $S$ consists of segments is the most important to us.
\begin{definition}
If $S$ is the disjoint union of $n$ intervals, we denote $\mathcal P_n = \mathcal P(I^{\sqcup n})$. We define an associative product on $\mathcal P_n$ given by $D\cdot D'$ by placing $D$ after $D'$ with respect to the orientation of the intervals. The unit of this algebra, given by the empty chord diagram, is denoted $1$. 	
\end{definition}
There is a considerable array of other operations on chord diagrams, of which we will need the following, see \cite{LeMurakami:parallel, LeMurakami:universal}
\begin{definition} \label{def:chord-diagrams-operations} Let $S$ and $S'$ be supporting 1-dimensional manifolds. We define the following linear maps. \smallskip

\textbf{disjoint union:} There is a map $ \mathcal P(S)\otimes \mathcal P(S') \to \mathcal P(S\sqcup S')$ given by taking the disjoint union of chords. It's denoted by the tensor product $\otimes$.
\smallskip

\textbf{minus chords:} There is a map $\mathcal P(S)\to \mathcal P(S)$ which multiplies each chord diagram by $-1$ if it contains an odd number of chords. We will denote it by $D \mapsto D_{-t}$.

\textbf{component reversal:} If $c\subset S$ is a connected component, then $S_c \colon \mathcal P(S) \to \mathcal P(S)$ is given by reversing the orientation of $c$ and multiplying the chord diagram by $-1$ if it has an odd number of chord endpoints on $c$.
\smallskip

\textbf{component removal:} If $c\subset S$ is a connected component, then $\varepsilon_c\colon \mathcal P(S) \to \mathcal P(S\setminus c)$ is given by removing $c$ and setting to zero any chord diagram which has a chord ending on $c$.
\smallskip

\textbf{component doubling or cabling:} If $c\subset S$ is a connected component, then $\Delta_c \colon \mathcal P(S) \to \mathcal P(S\sqcup c)$  is the map given by sending each chord diagram to the sum of all possible ways of distributing the chord endpoints on $c$ to the two copies of $c$ in $S\sqcup c$. 
\smallskip

\textbf{pushforward:} If $f\colon S \to S'$ is an orientation-preserving embedding, then $f_* \colon \mathcal P(S) \to \mathcal P(S')$ is given by pushing the diagram along $f$. If $f$ is a diffeomorphism $S\to S$, it's given by a permutation of the labels of $S$; in this case the map is denoted by $D \mapsto D^{\sigma(1) \dots \sigma(n)}$.

\end{definition}
All of these operations are compatible with the 4T relations.  The map $\Delta_c$ is coassociative in the obvious sense; we denote its iterated applications, which replace $c$ with $k+1$ copies of $c$, by $\Delta_c^k$. 

On $\mathcal P_n$, we denote by $\Delta_i$ the map which doubles the $i$th strand. For $\Delta_1 \colon \mathcal P_1 \to \mathcal P_2$, we use simply $\Delta$. On $\mathcal P_n$, the maps $\Delta_i$, $\varepsilon_i$ are algebra morphisms and the complete reversal $S_1 \dots S_n$ is a product-reversing involution.
\medskip

The following powerful operation on chord diagrams combines the doubling of strands and composition:
\begin{definition} Let $q\in c \subset S$ be a point on the component $c$ which is not a chord endpoint. For each number $r\in \mathbb N_{\ge 1}$ there is a map
\begin{align}\label{eq:insertion}  \text{ins}_c \colon \mathcal P_r\otimes \mathcal P(S) &\to \mathcal P (\Delta^{r-1}_c S)
\\
D_r \otimes D_S &\mapsto \text{ins}_c(D_r, D_S) \nonumber
\end{align}
given by taking $\Delta^{r-1}_c D_S$ and replacing the $r$ copies of a small interval around $q$ with $D_r \in \mathcal P_r$. 
\end{definition}\begin{proposition}[{\cite[Lemma~2.1]{LeMurakami:parallel}}]\label{prop:insert}
The insertion operation \eqref{eq:insertion} is independent of the choice of the point $q$.
\end{proposition}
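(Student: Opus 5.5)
The plan is to show that moving the insertion point $q$ past a single chord endpoint on $c$ does not change $\text{ins}_c(D_r, D_S)$. Since isotopies of $q$ that avoid chord endpoints obviously leave the result unchanged, any two admissible points can then be connected by finitely many such elementary moves. By linearity and continuity with respect to the grading, it is enough to treat a single collection of chords $D_S$ and a single chord diagram $D_r$. We then fix an endpoint $p \in c$ of a chord $\gamma$ of $D_S$, with other endpoint $p'$, and compare inserting $D_r$ just before $p$ with inserting it just after $p$.

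\textbf{Reduction to $D_r$ a single chord.} After cabling, the chord $\gamma$ becomes the sum $\sum_{i=1}^r \gamma_i$, where $\gamma_i$ connects the $i$th copy of $p$ with $p'$. All other chords of $D_S$ are cabled in the same way in both configurations. The claim is therefore that $D_r$, placed on the $r$ parallel strands, commutes with the element $\sum_i \gamma_i$. More precisely, $\sum_i \gamma_i$ has one endpoint spread over the $r$ strands of the inserted block and the other endpoint at $p'$, which is fixed and lies elsewhere, possibly even on $c$ itself (in which case $p'$ gets cabled too, but it stays outside the small neighbourhood of $p$ and $q$). Since $D_r$ is a product of single chords $\delta_{jk}$ in the algebra $\mathcal P_r$, it suffices to check that each $\delta_{jk}$ between strands $j,k$ (with $j\neq k$, or $j=k$ for an isolated chord on one strand) commutes with $\sum_i \gamma_i$.

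\textbf{Key step: the 4T relation.} For $i\neq j,k$, the chord $\gamma_i$ has no endpoint on strands $j,k$ near $p$, so $\delta_{jk}$ commutes with $\gamma_i$ by isotopy of endpoints. It remains to show that $\delta_{jk}$ commutes with $\gamma_j + \gamma_k$. This is exactly the 4T relation \eqref{eq:4T-relations}, applied to the three strands $j$, $k$ and the strand through $p'$. The 4T relation says that $[\delta_{jk}, \gamma_j + \gamma_k] = 0$, which is the infinitesimal braid relation $[t_{jk}, t_{jl}+t_{kl}]=0$. If $j=k$, the chord $\delta_{jj}$ is an isolated chord on one strand, and it commutes with everything on that strand by sliding its endpoints, again using 4T.

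\textbf{Main obstacle.} The delicate point is bookkeeping when the far endpoint $p'$ also lies on $c$, or even inside the region being cabled. In particular, if $\gamma$ has both endpoints on $c$, then both endpoints get spread over the copies. We handle this by moving $q$ past one endpoint at a time; the other endpoint stays far away and plays the role of the fourth strand in 4T. This works because the 4T relation only involves the local configuration, and the relation holds in $\mathcal P(S)$ regardless of which component the third strand lies on. We must also check the orientation conventions (that the reversed strands carry no extra signs), but since pushforward and cabling are sign-free, this is routine.
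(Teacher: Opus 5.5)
\textbf{Verdict: there is a genuine gap, in the reduction step.} Your overall plan is the right one. You move $q$ past one chord endpoint $p$ at a time, note that the chord $\gamma$ through $p$ becomes $\sum_i\gamma_i$ after cabling, and show via 4T that $D_r$ commutes with $\sum_i\gamma_i$ while the far endpoint stays fixed. Your treatment of the case $p'\in c$ (fix the copy of $p'$) is also fine.

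The problem is your claim that every $D_r\in\mathcal P_r$ is a product in $\mathcal P_r$ of single chords $t_{jk}$ (with $t_{jj}$ an isolated chord). This is false: these elements generate only a proper subalgebra. Already for $r=1$, in chord degree $2$ on one interval there are three diagrams, with endpoint patterns $AABB=t_{11}^2$, $ABBA$ and $ABAB$. The 4T relations in this degree only identify $ABBA$ with $AABB$. So the interleaved diagram $ABAB$ is not a polynomial in $t_{11}$, and it cannot be cut into horizontal layers containing one chord each. Your argument therefore proves independence of $q$ only for $D_r$ in the subalgebra generated by the $t_{jk}$. That is a real loss. The case $r=1$ with arbitrary $D_1$ is exactly what the paper needs for the commutativity of $\mathcal P_1$ and the centrality of $\Delta^{n-1}(\mathcal P_1)$ in Proposition~\ref{prop:centerPn}. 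Your version covers only polynomials in $t_{11}$ there.

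\textbf{The fix is to not factor $D_r$ at all.} Fix the copy of $p'$ carrying the far endpoint of the cabled chord. Write $E(w)$ for the diagram in which the near endpoint sits at position $w$ inside the block, with all chords of $D_r$ and of the rest of the cabled $D_S$ in place. On each strand $i$ of the block, moving the near endpoint from below the block to above it telescopes into elementary passages past single endpoints of $D_r$. Hence $\sum_i\bigl(E(\mathrm{top}_i)-E(\mathrm{bot}_i)\bigr)=\sum_x\bigl(E(x^+)-E(x^-)\bigr)$, where the sum runs over all chord endpoints $x$ of $D_r$ and $x^\pm$ denote positions just after and just before $x$.

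Every chord $\{x,y\}$ of $D_r$ has both endpoints in the block, so you can group this sum chord by chord. The 4T relation is local: it holds with the other chords of $D_r$ interleaved arbitrarily, and whether $x,y$ lie on the same strand or not. It gives $E(x^+)-E(x^-)+E(y^+)-E(y^-)=0$ for each chord, which is just $[t_{12},t_{13}+t_{23}]=0$ with the remaining chords present. This generalized 4T relation underlies Bar-Natan's proof that $\mathcal P_1$ is commutative, and Le--Murakami's Lemma~2.1, which the paper cites instead of giving its own proof. With it in place of your factorization step, the rest of your argument goes through.
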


We have the following information about the center of $\mathcal P_n$.
\begin{proposition}\label{prop:centerPn}
The image of $\Delta^{n-1}\colon \mathcal P_1 \to \mathcal P_n$ lies in the center of $\mathcal P_n$.
\smallskip

Let $t_{ij}\in \mathcal P_n$ be the chord connecting $i$-th and $j$-th strand.  
The element $\sum_{1\le i < j \le n} t_{ij} \in \mathcal P_n$ lies in the center of $\mathcal P_n$.
\end{proposition}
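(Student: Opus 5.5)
The plan is to derive both claims from Proposition~\ref{prop:insert}, the independence of the insertion point, with one extra 4T computation for the second claim.

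\textbf{First claim.} Let $D\in\mathcal P_1$ and $E\in\mathcal P_n$. Consider $\text{ins}_c(E, D)$ with $S=c=I$ a single interval, so $r=n$. Doubling $D$ gives $\Delta^{n-1}D$, and we replace $n$ parallel small intervals around a point $q$ with $E$.
\begin{itemize}
\item If $q$ is chosen above all chord endpoints of $D$, the result is $E\cdot\Delta^{n-1}D$.
\item If $q$ is chosen below all chord endpoints of $D$, the result is $\Delta^{n-1}D\cdot E$.
\end{itemize}
Proposition~\ref{prop:insert} says these agree, so $\Delta^{n-1}D$ commutes with every $E\in\mathcal P_n$. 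The products are compatible with the completion, since both sides are continuous and graded. So $\Delta^{n-1}D$ is central.

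\textbf{Second claim.} The element $\Omega=\sum_{i<j}t_{ij}$ is almost $\Delta^{n-1}$ applied to the isolated chord on one interval. Let $\theta\in\mathcal P_1$ be the single chord with both endpoints on one strand. Doubling distributes its two endpoints in all possible ways. This gives
\[\Delta^{n-1}\theta = \sum_{i}\theta_i + 2\sum_{i<j}t_{ij},\]
where $\theta_i$ is the isolated chord on strand $i$, and the unordered pair $\{i,j\}$ arises from two endpoint assignments.

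By the first claim, $\Delta^{n-1}\theta$ is central. It therefore suffices to show that each $\theta_i$ is central; then $\Omega=\tfrac12\bigl(\Delta^{n-1}\theta-\sum_i\theta_i\bigr)$ is central.

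\textbf{Centrality of $\theta_i$.} Apply the first claim with $n=1$ to the single strand $i$, viewing $\theta_i$ as a diagram supported on that strand. This does not quite suffice, because other chords have endpoints on strand $i$. Instead we use the 4T relation, or equivalently a special case of Proposition~\ref{prop:insert}. A chord with both ends adjacent on one strand can be slid past any single chord endpoint on that strand. The 4T relation with two of its endpoints on the same strand says exactly that moving an endpoint across an isolated chord costs a difference of two terms that cancel.

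Alternatively, take $\theta_i=\text{ins}$ of $\theta\in\mathcal P_1$ into the $i$-th component of $E$, viewed as a diagram on $S=I^{\sqcup n}$ with $r=1$. Proposition~\ref{prop:insert} then gives independence of where $\theta$ sits on strand $i$, so $\theta_i E=E\theta_i$.

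The main (mild) obstacle is exactly this point: making sure the insertion with $r=1$ applies, which is immediate from the definition. With that in hand, the factor $\tfrac12$ is harmless over $\mathbb C$.
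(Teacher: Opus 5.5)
Your proposal is correct and follows essentially the same route as the paper: the first claim comes from Proposition~\ref{prop:insert}, with your explicit choice of the insertion point above or below all chord endpoints. The second comes from $\Delta^{n-1}(t_{11}) = \sum_i t_{ii} + 2\sum_{i<j} t_{ij}$ together with centrality of each $t_{ii}$ via the $r=1$ case of Proposition~\ref{prop:insert}, and your direct 4T sliding argument is a valid alternative for that last step.
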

See \cite[Thm.~1]{LeMurakami:universal} for the first part, the $n=2$ case of the second part is used in \cite[Section~7]{LeMurakami:universal}. 
\begin{proof}
The first claim follows from the Proposition \ref{prop:insert}. The second claim is proven as follows. Apply $\Delta^{n-1}$ to $t_{11}\in \mathcal P_1$ to get
\[ \Delta^{n-1}(t_{11}) = \sum_i t_{ii} + 2 \sum_{i<j} t_{ij}.\]
 The LHS is in the center of $\mathcal P_n$, as is the first term of the RHS, by $r=1$ case of Proposition \ref{prop:insert}.
\end{proof}

\subsubsection{Combinatorial Kontsevich integral}
We will use the combinatorial Kontsevich integral defined on framed q-tangles, following definition of Le and Murakami from \cite{LeMurakami:parallel, LeMurakami:universal}. Let us briefly recall that a framed q-tangle is a framed  tangle whose endpoints are lying on two lines (bottom and top), together with a parenthesization of these two sets of endpoints. In pictures, we will encode this parenthesization by the spacing of the endpoints. The framing lies in the plane of the tangle, pointing to the right/left at positively/negatively oriented endpoints of the building blocks listed in Definition \ref{def:Kontsevich-integral} below. 

The Kontsevich integral depends on the choice of  a Drinfeld associator.
\begin{definition}
	Let $\Phi(X, Y)\in \mathbb C \langle \langle X, Y \rangle\rangle$ be formal non-commutative power series in two variables. Denote
	 \begin{equation} \label{eq:associator-t12-t23} \Phi  = \Phi(t_{12}, t_{23}) \in \mathcal P_3,\end{equation} then such $\Phi(X, Y)$ is a Drinfeld associator if
	\begin{align}
		\Phi(X\otimes 1 + 1 \otimes X, Y\otimes 1 + 1\otimes Y) &= \Phi(X, Y) \otimes \Phi(X, Y) \label{eq:assoc:grouplike} \\
		\Phi(Y, X) &= \Phi(X, Y)^{-1} \label{eq:assoc:inverse}\\
		\Phi(t_{23}, t_{34}) \cdot \Delta_2 \Phi \cdot \Phi(t_{12}, t_{23}) &= \Delta_3 \Phi \cdot \Delta_1 \Phi \quad \text{in $\mathcal P_4$}, \label{eq:assoc:pentagon} \\
		\Phi(t_{13}, t_{12}) \cdot e^{\tfrac 12 t_{13}} \cdot \Phi^{-1}(t_{13}, t_{23}) \cdot e^{\tfrac 12 t_{23}} \Phi &= e^{\tfrac 12 (t_{13}+ t_{23})}  \quad 
		\text{ in $\mathcal P_3$} 
	\end{align}
\end{definition}
	The first equation tells us that Drinfeld associator is grouplike in the universal enveloping algebra of the Drinfeld--Kohno Lie algebra $U(\mathfrak{dk}_3)$ \cite[Sec.~4]{Cartier}. Associators in $\mathcal P_3$ coming from non-commutative formal power series $\Phi(X,Y)$ are called horizontal, as they can be written using horizontal chords only (see e.g. \cite[Corr.~4.4]{BarNatan:braids}).
It is an important theorem of Drinfeld \cite{DrinfeldGal} that such associators exist, and can be chosen to have rational coefficients.
\begin{definition}\label{def:Kontsevich-integral}
The unnormalized Kontsevich integral $Z$ is defined by the following values on generators:
\[ Z \left(\raisebox{-0.3\height}{\,\includegraphics{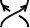}\,}\right) = e^{\tfrac 12 t_{12}}, \quad Z\left(\,\raisebox{-0.3\height}{\includegraphics{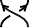}}\,\right) = e^{-\tfrac 12 t_{12}},\]
in the sense that we insert the chord diagram $e^{\pm \tfrac 12 t_{12}}$ below the crossing.

The cap and cup are sent to the chord diagram with no chords. For the associator, let $w_1$, $w_2$ and $w_3$ be three parenthesizations.
Then the straight tangle $(w_1 w_2)w_3$ to $w_1(w_2w_3)$ is sent by $Z$ to 
\[ Z\left(\;\raisebox{-0.3\height}{
\begin{overpic}{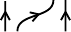}
	\put(-10, -15){\footnotesize{$w_1$}}
	\put(20, -15){\footnotesize{$w_2$}}
	\put(80, -15){\footnotesize{$w_3$}}
\end{overpic}}
\; \;\right) = \Delta^{|w_1|-1}_1 \Delta^{|w_2|-1}_2 \Delta^{|w_3|-1}_3\Phi \]
and 
\[ Z\left(\;\raisebox{-0.3\height}{
\begin{overpic}{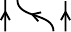}
	\put(-5, -15){\footnotesize{$w_1$}}
	\put(60, -15){\footnotesize{$w_2$}}
	\put(90, -15){\footnotesize{$w_3$}}
\end{overpic}}
\; \; \;\right) = \Delta^{|w_1|-1}_1 \Delta^{|w_2|-1}_2 \Delta^{|w_3|-1}_3\Phi^{-1},\]
where we use $\Phi\in \mathcal P_3$ from \eqref{eq:associator-t12-t23}. Here, each of the strands is, in general, composed of multiple parallel strands, according to $w_i$. 
\medskip

On more complicated tangles, $Z$ is defined by its compatibility with disjoint union and composition 
\[ Z(T_1 \sqcup T_2) = Z(T_1)\otimes Z(T_2) \quad \text{and} \quad Z(T \cdot T') = Z(T) \cdot Z(T').\]
Finally, if $c$ is a component of a tangle $T$ and $S_c T$ is the tangle $T$ with the orientation reversed, we declare
\begin{equation} \label{eq:Z-and-S} Z(S_c T) = S_cZ(T). \end{equation}
\end{definition}

This Kontsevich integral is well defined on tangle diagrams, but it's not invariant relative to the moves which change the number of critical points. The failure of being an isotopy invariant is measured by the following element.
\begin{definition}
The Kontsevich integral of the unknot is the chord diagram $\nu\in \mathcal P_1$ defined by
 \begin{equation} \label{eq:nu-definition}\nu^{-1}  = Z \left(  \raisebox{-0.4\height}{\includegraphics{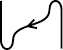}} \right) \in \mathcal P_1. \end{equation}
\end{definition}
The element $\nu$ does not depend on the associator $\Phi$, by an argument via twisting \cite[Remark~(1)]{LeMurakami:parallel}. The element $\nu$, and indeed any $Z(T)\in \mathcal P_n$ can be expanded as the empty chord diagram plus terms with at least one chord. On such elements $D = 1 + d$, we define real powers by the Taylor series
\begin{equation} \label{eq:power} D^s \coloneq 1 + s d + \tfrac 12 s(s-1) d^2 + \dots \end{equation}
which is well defined since only the first $k+1$ summands contribute terms with $k$ chords.

To get an isotopy invariant of framed q-tangles, we need to normalize $Z(T)$ as follows
\begin{definition}
The normalized Kontsevich integral is defined as the unnormalized one, with the exception that $\nu^{\tfrac 12}$ is assigned to the cap $\cap$ and the cup $\cup$.\footnote{In general, we could normalize with $\alpha, \beta\in \mathcal P_1$ such that $\alpha \beta = \nu$ and $S(\alpha) = \alpha$, $S(\beta) = \beta$.}
\end{definition}
It follows from \cite{LeMurakami:universal, LeMurakami:parallel} that both $Z$ and $\hat{Z}$ are well defined, isotopy invariants (for moves not changing the number of critical points for $Z$). Note that we use opposite multiplication for the algebras $\mathcal P_n$, and the strands go upwards for us by default -- at the end the maps defined here are the same as in \cite{LeMurakami:parallel}. We don't require the associator to be even. Our convention for the framing is slightly different from what is stated in \cite{LeMurakami:universal}, however we believe they use this convention to compute cabling in \cite{LeMurakami:parallel}. We also recall that $Z$ is compatible with strand reversal (by definition) and with strand removal (by a simple check on generators).

\subsection{Drinfeld categories}
A more categorical interpretation of the Drinfeld associator is the following construction of braided monoidal categories from infinitesimal deformations of symmetric monoidal categories, due to Cartier \cite{Cartier}. Let $k$ be a ring containing $\mathbb Q$ (or $\mathbb R$ or $\mathbb C$, depending on the coefficients of the Drinfeld associator we choose).
\begin{definition}
An infinitesimal braiding on a $k$-linear symmetric monoidal category $\Ca$ is given by a natural transformation $t_{X,Y}\colon X\otimes Y \to X\otimes Y$ such that $t_{Y, X} = \sigma_{X, Y} \circ t_{X, Y} \circ \sigma_{Y, X}$, where $\sigma$ is the symmetry of $\Ca$, and such that 
\[ \sigma_{X, Y} \circ \left(  1_{X\otimes Y} + \tfrac{\varepsilon}{2} t_{X, Y} \right) \colon X \otimes Y \to Y \otimes X \]
defines a braiding on\footnote{The notation of tensoring a category with a ring is a shorthand for tensoring the Hom-spaces of $\Ca$ with this ring.} $\Ca\otimes_k k[\varepsilon]/(\varepsilon^2)$ (i.e.\ the hexagon equation is taken only to the first order in $\varepsilon$). 
\end{definition}
For us, the most important example of such $\Ca$ is the $\mathbb{C}$-linear category $U\g\text{-mod}$ of complex representations of $\g$, with $t$ given by the action of a Casimir element. A Drinfeld associator can be used to quantize an infinitesimal braiding into a braiding as follows.
\begin{definition}
Let $\Ca$ be a category with an infinitesimal braiding and $\Phi$ a Drinfeld associator. Then we denote $\Ca_\hbar^\Phi$ the category $\Ca\otimes_k k[[\hbar]]$ with braiding given by $\beta_{X, Y} = \sigma_{X, Y} \circ \exp(\hbar t_{X, Y}/2)$ and the associator given by precomposing the associator of $\Ca$ with $\Phi(\hbar t_{X, Y}, \hbar t_{Y, Z})$.
\end{definition} 

We can consider associative algebras internal to the monoidal category $\Umodnp{\g}$. Since the associativity constraint is deformed using the associator, such algebras are a natural candidate for deformation quantization of quasi-Poisson manifolds.
\begin{definition}[{\cite[Def.~4.4]{EnriquezEtingof}}]
	A deformation quantization of a $G$-quasi-Poisson manifold $M$ is an associative algebra $A, m_0 + \hbar m_1 + \dots$ internal to $\Umodnp{\g}$ such that $(A, m_0)$ is the commutative algebra $C^\infty(M, \mathbb C)$ of complex-valued smooth\footnote{If we choose a real associator, we can restrict to real functions. Since the \cite{LBS:DQ} star products on moduli spaces are given by chord diagrams, we could also restrict to algebraic functions if $G$ is algebraic. For the same reason, the components $m_i$ are bidifferential operators.} functions on $M$ and 
	\[ m_1(f, g) - m_1(g, f) = \{f, g\}.  \]
\end{definition}

Finally, we can connect chord diagrams to Lie theory for Casimir Lie algebras as follows.
\begin{definition}
	Let $\g$ be a Casimir Lie algebra. The weight of a chord diagram $D \in \mathcal P_n$ with $k$ chords is the element $w_\g(D)\in (U\g)^{\otimes n}[[\hbar]]$ given by 
	taking $(\hbar t)^{\otimes k}\in \hbar^{k}\g^{\otimes 2k}$ and multiplying the $2k$ $\g$-factors in the order specified by the order of chord endpoints on the $n$ strands (see e.g. \cite[Sec~7]{LeMurakami:parallel}). 
\end{definition}
The weight $w_\g(D)$ is invariant with respect to the diagonal $\g$-action and gives a Hopf-algebraic interpretation to many of the operations from Definition \ref{def:chord-diagrams-operations}.
\medskip

A q-tangle $T$ without critical points can be decomposed into braidings and associators. It thus gives a natural transformation between repeated tensor product functors $(\Umodnp{\g})^{\otimes n} \to \Umodnp{\g}$. By construction of $Z$, the component of this natural transformation on a $\g$-representation $X_1 \otimes \dots \otimes X_n$ is given by the action $w_\g Z(T) \in (U\g)^{\otimes n}[[\hbar]]^\g$. We will often say that a tangle, or a chord diagram, act on a tensor product of $\g$-representations, by which we mean the action of its weight. \emph{Translating between action of $(U\g)^{\otimes n}$ and tangles will be the basis of most of our arguments about star products.}
\medskip

An example of such action is the monoidal structure on the \emph{fusion} functor, which quantizes the quasi-Poisson fusion \cite{LBS:DQ} (see Theorem \ref{thm:LBS-quantization} below).
\begin{definition}[{\cite[Prop.~2, Thm.~2]{LBS:DQ} }] \label{def:fusionreduction}
Let $\g$ and $\h$ be Casimir Lie algebras. Then \textit{fusion} is the (strong) monoidal functor $T\colon \Umod{\g\oplus\g\oplus\h} \to \Umod{\g\oplus\h}$ given by the pullback along the diagonal map $\g\to \g \oplus \g$. Its monoidal structure is the map 
$J_{X, Y} \colon X\otimes Y \to X\otimes Y$ given by the action of (the weight of) the following Kontsevich integral
\begin{equation}\label{eq:J}  J  \coloneq Z \left( \raisebox{-0.4\height}{\includegraphics{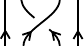}} \right) \in \mathcal P_4.\end{equation}
The strands are enumerated according to their order on the bottom line. 

\textit{Reduction} is the lax monoidal functor $\g\text{-inv} \colon \Umod{\g\oplus\h} \to \Umodnp{\h}$ given by taking invariants, with the inclusion $X^\g \otimes Y^\g \hookrightarrow (X\otimes Y)^{\g}$ as the monoidal structure.
\end{definition}
Since monoidal functors send associative algebras to associative algebras, the functors of fusion and reduction will be used to construct non-trivial algebras in $\Umodnp{\g}$.

\section{New results on Kontsevich integral}
Before discussing quantizations, we introduce a couple of new techniques related to the Kontsevich integral. These will be used in the next section; the reader interested in results is invited to skip to the next section and come back as necessary.

We start with a simple argument, which is a chord-diagrammatic version of the statement that for $e \in \g$, $e\otimes 1 + 1 \otimes e$ acts by zero on $(X\otimes Y)^\g$.
\begin{proposition} \label{prop:action-on-invariants}
Let $c$ be a component of $S$ diffeomorphic to the interval. Define a map $\mathcal P(S) \to \mathcal P(S)$ by first applying $\Delta_c$ (creating a new support interval $c'$), using the reversal operation $S_{c'}$ on the newly added component, and connecting the end of $c'$ to the beginning of $c$ using a cap, i.e.\
\[  \raisebox{-0.4\height}{\begin{overpic}{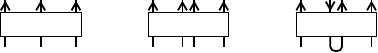}
	\put(26, 4){$\mapsto$}
	\put(62, 4){$\mapsto$}
	\put(9.5, -3){$c$}
	\put(46, -3){$c'$}
	\put(51, -3){$c$}
\end{overpic}} .\]
Then the result is equal to applying $\varepsilon_c$ followed by adding a component $c$ without any chord endpoints.
\end{proposition}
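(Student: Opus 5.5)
We prove the statement one chord diagram at a time and reduce everything to a local cancellation. Fix a chord diagram $D\in\mathcal P(S)$ with $m$ chord endpoints on $c$, listed $p_1<\dots<p_m$ along the orientation of $c$. If $m=0$, then $\Delta_c D$ is $D$ together with an empty copy $c'$, and closing up $c'$ with $c$ by the cap gives $D$ with $c$ replaced by a chord-free interval. This is exactly $\varepsilon_c D$ with an empty component $c$ added. So it suffices to show that the image of $D$ vanishes when $m\ge 1$. By linearity and compatibility of all operations with the 4T relations, it is enough to check this on individual diagrams.

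For $m\ge1$, write $\Delta_c D=\sum_{A\subseteq\{1,\dots,m\}} D_A$. Here $D_A$ puts the endpoints $p_i$, $i\in A$, on $c'$ and the rest on $c$. After $S_{c'}$ the term acquires the sign $(-1)^{|A|}$, and $c'$ runs downward. Concatenating via the cap, the component $c'\cup\text{cap}\cup c$ becomes a single interval, isotopic to $c$. Along it we meet first the points of $A$ in reversed order $p_{a_k},\dots,p_{a_1}$, and then the points of the complement in increasing order. This is the chord-diagram analogue of $\sum \pm S(e_{(1)})e_{(2)}=\varepsilon(e)$, the antipode axiom.

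The key step is to organize this sum as a telescoping cancellation. Consider the first endpoint $p_1$. Pair each subset $A$ with $A\triangle\{1\}$. In the resulting diagram on the merged interval, $p_1$ sits either as the last point of the reversed $A$-part (when $1\in A$) or as the first point of the complement part (when $1\notin A$). In both cases it is adjacent to the turning point of the cap, and the two positions differ only by sliding $p_1$ across the cap, a region containing no other endpoints. So the two diagrams are isotopic, while their signs $(-1)^{|A|}$ differ. The terms therefore cancel in pairs and the total is $0$, since $m\ge1$ means the pairing is a fixed-point-free involution.

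The main thing to verify carefully is this sliding step. One must check that the isotopy of endpoints along the merged component, through the cap, is allowed in $\mathcal P(S)$, where the isotopy is of endpoints within the support. One must also check that the sign convention of $S_{c'}$ (a factor $-1$ per endpoint on $c'$) matches this pairing, with no extra framing or normalization factor entering: the cap contributes the empty diagram for $Z$, and in any case here we work purely with chord diagrams. Once these two conventions are fixed, the cancellation is immediate and needs no 4T relation.
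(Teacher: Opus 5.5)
Your proof is correct and is essentially the paper's argument. The paper likewise pairs the two terms of $\Delta_c$ that differ only in whether the endpoint closest to the beginning of $c$ lies on $c'$ or on $c$. After $S_{c'}$ and closing up, these two terms are isotopic with opposite signs, so they cancel unless $c$ carries no endpoints.
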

\begin{proof}
If there are no chord endpoints on $c$, the result is clear. If there are some chord endpoints, let us look at two terms of $\Delta_c$ which differ only by the position of the chord endpoint which was the closest to the beginning of $c$. After applying $S_{c'}$ and connecting, these two terms differ by a sign, and thus cancel.
\end{proof}
\medskip

If we consider the action of $D\in \mathcal P_n$ on diagonal invariants $(X_1 \otimes \dots \otimes X_n)^\g$, where each $X_i\in U\g\text{-mod}$, we can use the invariance to rewrite the action in terms of $n-1$ modules only. The chord--diagrammatic version is the following.

\begin{proposition}\label{prop:operation-r-on-chord-diagrams}
Let $r_i\colon \mathcal P_n \to \mathcal P_{n-1}$ be the map given by reversing the $i$-th segment, applying $\Delta^{n-2}_i$ to it and connecting thus created $n-1$ segments to the original $n-1$ segments. Then $r_i$ is an algebra morphism.
\end{proposition}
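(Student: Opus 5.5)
We need to show $r_i(D\cdot D') = r_i(D)\cdot r_i(D')$. First we make the map $r_i$ precise: cable the $i$-th strand, reverse it, and attach the copies by caps or cups to the other strands. Concretely, for $D\in\mathcal P_n$ we take $\Delta_i^{n-2}$, apply $S$ to each of the $n-1$ new copies of strand $i$, and glue the $j$-th copy (for $j\neq i$) to strand $j$. The gluing uses a cap at one end, so that the reversed copy continues strand $j$. The result is a chord diagram on $n-1$ intervals, each of which passes once along a reversed copy of strand $i$.

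For multiplicativity, write $D\cdot D'$ as $D$ stacked on $D'$. Cabling is an algebra morphism, so $\Delta_i^{n-2}(DD')=\Delta_i^{n-2}(D)\Delta_i^{n-2}(D')$. Reversal of a strand is order-reversing on that strand. Hence on the glued strand $j$ the chord endpoints appear in the following order:
\begin{itemize}
\item first the reversed endpoints from the copy of strand $i$, contributed by $D$ and $D'$ in swapped order;
\item then the endpoints on the original strand $j$, contributed by $D'$ then $D$ (or the reverse, depending on convention).
\end{itemize}
So $r_i(DD')$ looks like a product in which the $D'$-part has been ``interleaved'' into the $D$-part, rather than simply stacked. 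The key step is to slide the pieces past one another so that $r_i(DD')=r_i(D)r_i(D')$.

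To perform this slide, I would use Proposition~\ref{prop:action-on-invariants}. It implies that a cabled reversed strand joined through a cap acts trivially, which is the chord version of invariance. Its iterated form says that chords on the connected bundle can be moved through the turning point. A cleaner route is to view the $n-1$ reversed copies together with the cap region as one thickened strand carrying $\Delta^{n-2}$ of something. Insertion independence, Proposition~\ref{prop:insert}, then lets us move a cabled block $\Delta^{n-2}(\cdot)$ along the bundle of $n-1$ strands. This lets us transport the reversed image of the $i$-strand part of $D'$ from one side of the caps to the other.

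Concretely, I would proceed in two steps.
\begin{itemize}
\item Reduce to generators: by linearity and the 4T-compatibility of all operations, it suffices to check diagrams $D,D'$ each consisting of a single chord. Equivalently, one can argue directly with stacked pictures.
\item Argue topologically: $r_i(D)$ is the pushforward of $\Delta_i^{n-2}D$ along a fixed immersion of the support into $n-1$ intervals. Two such stacked configurations differ by an isotopy that drags the cabled strand $i$ of $D'$ around the caps. That drag is exactly the independence of the insertion point in Proposition~\ref{prop:insert}, applied to the cabled component formed by the $n-1$ copies.
\end{itemize}

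The main obstacle is bookkeeping. One has to check that the signs introduced by $S$ on the reversed copies match up after sliding. One also has to check that sliding a cabled block through the caps is legitimate: it is an isotopy of endpoints along each connected component, but the block must stay cabled on all $n-1$ strands at once. I would handle this by treating the bundle of $n-1$ strands near the caps as the cabling of a single component, so that Proposition~\ref{prop:insert} applies verbatim.
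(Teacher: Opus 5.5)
You have the right key lemma: Proposition~\ref{prop:insert} applied to the $(n-1)$-fold cable of the reversed $i$-th strand, which is what the paper uses. As written, though, the argument does not close, because you never pin down what is slid past what, and two of your intermediate steps are wrong.

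\textbf{The reduction to single chords fails.} $\mathcal P_n$ is not generated by one-chord diagrams: already in $\mathcal P_1$ the crossed two-chord diagram is not a multiple of $t_{11}^2$. Moreover, checking $r_i(xy)=r_i(x)r_i(y)$ on pairs of generators would not make a merely linear map multiplicative. You must treat arbitrary $D,D'$.

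\textbf{Proposition~\ref{prop:action-on-invariants} is the wrong tool.} It says that doubling a component, reversing the copy and joining kills every diagram with endpoints on that component; it is not a sliding rule. Passing endpoints through a turning point is plain isotopy along the glued interval. This is free here because, by your own ordering, on each glued interval the $D'$-endpoints form one contiguous stretch around the turning point with no $D$-endpoints in it.

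\textbf{The correct argument has two steps, as in the paper.} First, push the $D'$-endpoints on the original strands $j$ through the turning points onto the reversed copies. All of $D'$ then sits in a box on the $n-1$ parallel reversed copies, between the $D$-endpoints and the turning points. This box is literally $r_i(D')\in\mathcal P_{n-1}$. Hence $r_i(DD')$ is obtained by gluing $\operatorname{ins}_c(r_i(D'),S_iD)$, where $c$ is the $i$-th strand of $S_iD$ and the insertion point is next to the turning points. Second, Proposition~\ref{prop:insert} slides this box along the cable, past $\Delta_i^{n-2}$ of the reversed $i$-strand part of $D$, to the free ends of the reversed copies. These are the beginnings of the glued intervals, so the result is $r_i(D)\cdot r_i(D')$.

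\textbf{You have the roles inverted.} The moving box is arbitrary and not cabled; what is cabled is the $D$-part it passes. Dragging endpoints around the caps is only the free first step, not the content of Proposition~\ref{prop:insert}. Your worry that ``the block must stay cabled'' and your plan to treat the bundle beyond the caps as a cable both invert this.

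\textbf{The direction of the slide is forced, not a convention.} Past the turning points, the strands $j$ carry $D$-endpoints with no cabling structure, so Proposition~\ref{prop:insert} says nothing there. Sliding on that side would give $r_i(DD')=r_i(D')\cdot r_i(D)$, which is false in general. Indeed, $r_n(E\otimes 1)=E$ for every $E\in\mathcal P_{n-1}$, and $\mathcal P_{n-1}$ is noncommutative for $n-1\ge 3$ (e.g.\ $[t_{12},t_{23}]\neq 0$).

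\textbf{There is no sign bookkeeping.} No endpoint changes component in either step, so the signs from the reversal are untouched.
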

For example, $r_2 \colon \mathcal P_3 \to \mathcal P_2$ is equal to
\[  \raisebox{-0.4\height}{\begin{overpic}{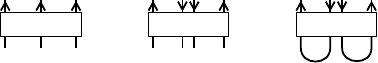}
	\put(28, 8){$\mapsto$}
	\put(67, 8){$\mapsto$}
\end{overpic}} .\]
\begin{proof}
In $r_i(D \cdot D')$, we can move all the chord endpoints of $D'$ to the cabling of the $i$-th strand. Then, by Proposition \ref{prop:insert}, this slides through $\Delta_i^{n-2}(D)$. In pictures,
\[ \raisebox{-0.4\height}{\begin{overpic}{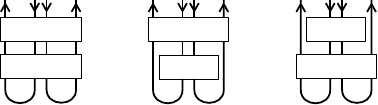}
	\put(28, 12){$=$}
	\put(68, 12){$=$}
	\put(8, 8){\footnotesize$D'$}
	\put(8, 17.7){\footnotesize$D$}
	\put(43, 8.5){\footnotesize{$r_2(D')$}}
	\put(47.5, 17.7){\footnotesize$D$}
	
	\put(83, 18){\footnotesize$r_2(D')$}
	\put(87, 8.5){\footnotesize$D$}
\end{overpic}} .\]
\end{proof}

\subsection{Reflections}
We will now discuss the Kontsevich integrals of tangles related by reflections. Le and Murakami considered the compatibility of $Z$ with reflections for even associators, in \cite[Prop.~3.1]{LeMurakami:parallel}; see also \cite[Section~3.1]{CDM}. We extend their result to all associators.
\begin{proposition} \label{prop:reflections}
	Let $T$ be a tangle. 

	\begin{enumerate}
	\item Let $T^{\leftrightarrow}$ be the rotation of $T$ around a vertical axis. Let $f\colon T \to T^{\leftrightarrow}$ be the diffeomorphism between the underlying $1$-manifolds.
	Then 
	\begin{equation} \label{eq:rotation-around-vertical-axis} f_* Z(T) = Z(T^{\leftrightarrow}) \end{equation}
	 \item Let $T^{\updownarrow}$ be the reflection of $T$ with respect to a horizontal line, i.e.\ exchanging overcrossings and undercrossings. Let $g\colon T \to T^{\updownarrow}$ be the diffeomorphism of the underlying $1$-manifolds. Then 
	 \begin{equation} \label{eq:reflection-about-horizontal-axis} g_* Z(T) = Z(T^{\updownarrow})_{-t}  \end{equation}
	 where $D_{-t}$ is defined by multiplying the chord diagram $D$ by $-1$ if it contains an odd number of chords.
	\end{enumerate} 
	 \end{proposition}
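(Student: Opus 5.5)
Since $Z$ is defined on generators and extended multiplicatively and by $\otimes$, both statements reduce to checking them on the elementary tangles of Definition \ref{def:Kontsevich-integral}: crossings, caps and cups, and the two associator tangles. Beyond the generators, we must check two compatibilities. The pushforwards $f_*$ and $g_*$ must respect composition and disjoint union. The operation $D\mapsto D_{-t}$ must also respect them, which holds because it is multiplicative: $(D\cdot D')_{-t} = D_{-t}\cdot D'_{-t}$ and similarly for $\otimes$, since chord numbers add. It also commutes with strand reversal $S_c$ and with the cablings $\Delta_c$, because both preserve the number of chords. The order of composition is the subtle point. A rotation about the vertical axis keeps the vertical order of pieces, so $f_*$ is compatible with $Z(T\cdot T')=Z(T)Z(T')$ directly. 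A reflection in a horizontal line in the projection plane (the over/under exchange, i.e.\ reflection of the $3$-dimensional picture through the plane containing the horizontal line and perpendicular to the page, which keeps the diagram but swaps crossings) also preserves vertical order. So in both cases it suffices to compare generator by generator, also taking care that the framings and parenthesizations go to the mirrored ones.

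For the rotation in part 1, crossings go to crossings of the same sign, since rotation in 3-space preserves handedness, and $f_*$ sends $t_{12}$ to $t_{12}$. Caps and cups go to caps and cups, and orientations are handled by \eqref{eq:Z-and-S}. The associator tangle $(w_1w_2)w_3\to w_1(w_2w_3)$ is sent to $(w_3'w_2')w_1'\to w_3'(w_2'w_1')$ read in reverse order. This is the inverse associator configuration with strands permuted by $1\leftrightarrow3$, so we need $\Phi(t_{12},t_{23})^{321}=\Phi(t_{23},t_{12}) = \Phi(t_{12},t_{23})^{-1}$. This is exactly \eqref{eq:assoc:inverse}, and the cabled version follows by applying $\Delta$'s, which commute with permutations. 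This part is the Le--Murakami argument and does not use evenness.

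For the reflection in part 2, a positive crossing becomes a negative one. Its value $e^{\frac12 t_{12}}$ maps under $g_*$ to $e^{\frac12 t_{12}}$, while $Z$ of the reflected crossing is $e^{-\frac12 t_{12}}$, and $(e^{-\frac12 t_{12}})_{-t}=e^{\frac12 t_{12}}$, as required. Caps and cups carry no chords, so they are fine. The associator tangle is mapped to itself as a straight tangle, so we need $\Phi(t_{12},t_{23}) = \Phi(t_{12},t_{23})_{-t} = \Phi(-t_{12},-t_{23})$. This is false for a non-even associator, and it is the main obstacle. Le--Murakami avoid it by assuming evenness. Without evenness, I would argue that the straight associator tangle should not be viewed as fixed by the reflection. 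Instead, I would work in the $S^2$-framework where the q-tangle's associator is defined via a choice of half-plane in which the strands are "parenthesized close". Reflection then exchanges the convention, replacing $\Phi$ by the associator $\Phi(-X,-Y)$ composed with conjugation by the appropriate braiding. So the concrete claim to establish is that, for a general tangle, replacing $\Phi$ by $\Phi_{-t}$ (which is again a Drinfeld associator, since all four axioms are invariant under $t\mapsto -t$ combined with inverting the hexagon) gives $Z_{\Phi_{-t}}(T)=Z_\Phi(T^\updownarrow)_{-t}$ generator by generator. Then I would show that $Z$ on framed q-tangles is independent of the associator, in the sense that the two associators are related by a twist $F\in\mathcal P_2$ that, after the normalization conventions, cancels on tangles with fixed endpoints. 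The twist relating $\Phi(X,Y)$ and $\Phi(-X,-Y)$ is explicit, and the introduction mentions it. Alternatively, one can use the Drinfeld--Le--Murakami result that $Z$ changes by conjugation with cabled twists at the endpoints, which are trivial on the endpoint parenthesizations being compared. The hardest step is producing this twist and checking that it contributes exactly the endpoint corrections and nothing else. I would try to obtain it from the hexagon axiom by taking $F$ built from $\Phi$ and $e^{t_{12}/2}$, namely the half-twist relation between $\Phi(t_{12},t_{23})$ and $\Phi(t_{13},t_{12})$ appearing in the last associator axiom.
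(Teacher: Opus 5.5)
Your part~1 is fine: it is the Le--Murakami argument, which the paper simply cites. Part~2 has a genuine gap, and it comes from how you read $T^{\updownarrow}$. You take it to be the mirror image that keeps the diagram and swaps every crossing, so that vertical order is preserved and the straight associator tangle is fixed. Under that reading the statement is false for a non-even associator. For $T$ the associator tangle $(w_1w_2)w_3\to w_1(w_2w_3)$ one gets $T^{\updownarrow}=T$ and $g_*=\mathrm{id}$, and the claim becomes $\Phi=\Phi_{-t}$.

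Your rescue cannot close this. Passing from $\Phi$ to $\Phi_{-t}$ is indeed a twist (in fact $\Phi_{-t}=\Phi^{a}$ by Proposition~\ref{prop:twist-by-phi}). But a twist changes $Z$ of a q-tangle by cabled twist elements at the top and bottom endpoints, here $\Delta_2F\cdot(1\otimes F)$ and $\Delta_1F\cdot(F\otimes 1)$. These do not cancel when the two parenthesizations differ. Concretely, $Z_{\Phi_{-t}}$ of the associator tangle is $\Phi_{-t}\neq\Phi$, so the associator-independence you need fails on exactly the generator where you need it.

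The reflection you describe in words, through the plane containing the horizontal line and perpendicular to the page, is actually the right one. However, it does not keep the diagram or the vertical order: it turns the picture upside down, and the swap of crossing types is only a consequence. A flipped positive crossing looks like a negative one. This is also how the statement is used later: it turns the doubled cup into the doubled cap to get $a_{-t}=b$, and it produces $S$ in Proposition~\ref{prop:S-nu}.

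Under this flip, orientations are pushed forward along $g$, so every strand now points downward. The associator tangle goes to the inverse associator tangle with all strands reversed, so by \eqref{eq:Z-and-S} its value is $S_1S_2S_3$ applied to (the cabling of) $\Phi^{-1}$. Compare this with $g_*\Phi$ on the same downward strands. $g_*\Phi$ carries the words of $\Phi$ in orientation order. $S_1S_2S_3$ keeps the chords of $\Phi^{-1}$ in place but reverses the orientation, hence reverses their order; there is no sign, since every chord has both endpoints on reversed strands.

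The identity to check is therefore $\Phi=\bigl((\Phi^{-1})^{\mathrm{rev}}\bigr)_{-t}$, equivalently $\Phi^{\mathrm{rev}}(X,Y)=\Phi^{-1}(-X,-Y)$. This says precisely that the antipode of $U\mathfrak{dk}_3$ sends the grouplike element $\Phi$ (see \eqref{eq:assoc:grouplike}) to its inverse. This is the paper's argument, and it needs neither evenness nor any twist. Your checks for crossings carry over unchanged: chords $t_{12}$ slide through the crossing, and $(e^{-t_{12}/2})_{-t}=e^{t_{12}/2}$. So do your checks for caps and cups, which are now exchanged and carry no chords.
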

	Note that the endomorphisms $f_*$, $g_*$ of $\mathcal P(S)$ permute the components of $S$ in general. For example, applying part 1 to the tangle $J$ from \eqref{eq:J} tells us that $J^{4321} = J$.
	\begin{proof}
		Part $1$ is proven in \cite[Prop.~3.1]{LeMurakami:parallel}. For part 2, we proceed by considering the theorem on generators of  tangles, as in \cite{LeMurakami:parallel}. The associator is the only non-trivial case. 
		The map $g_*$ on the left hand side of \eqref{eq:reflection-about-horizontal-axis} is the identity, and the left hand side is therefore given by taking the reversed order of the multiplication $\Phi^\text{rev}(X, Y)$. 
		Since the associator is grouplike \eqref{eq:assoc:grouplike} in the universal enveloping algebra $U\mathfrak{dk_3}$, the antipode sends $\Phi(X,Y)$ to its inverse, i.e.\
\begin{equation} \label{eq:assoc:reverse} \Phi^{\text{rev}}(-X, -Y) = \Phi^{-1}(X, Y),\end{equation}
i.e.\ the left hand side of \eqref{eq:reflection-about-horizontal-axis} is 
\[  \Phi^\text{rev}(X, Y) = \Phi^{-1}(-X, -Y)\]
This is exactly the right hand side of \eqref{eq:reflection-about-horizontal-axis}.
	\end{proof}
	
	A simple consequence is the following, which was implicitly used by \cite{LeMurakami:parallel}.
	\begin{proposition}\label{prop:S-nu}
	Recall that $\nu\in \mathcal P_1$ is the so-called Kontsevich integral of the unknot. Then
	\[ S(\nu) = \nu\,. \]
	\end{proposition}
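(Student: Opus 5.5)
The plan is to compute $S(\nu^{-1})$ directly from the definition \eqref{eq:nu-definition}, using the compatibility of $Z$ with orientation reversal \eqref{eq:Z-and-S}, and then to identify the reversed unknot diagram with a reflection of the original one via Proposition \ref{prop:reflections}. Since $S$ is an algebra (anti-)automorphism of $\mathcal P_1$ (on one strand, reversal is product-reversing, but $\mathcal P_1$ is commutative), it suffices to show $S(\nu^{-1}) = \nu^{-1}$.

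First, by \eqref{eq:Z-and-S}, $S(\nu^{-1}) = S\,Z(U) = Z(S U)$, where $U$ is the q-tangle diagram in \eqref{eq:nu-definition}: a single strand with a cap, a cup and an associator (or its inverse) between them, turning the strand around. The diagram $SU$ is the same planar picture with the orientation reversed. Next, I would observe that rotating this picture by $\pi$ in the plane (equivalently, composing the rotation around a vertical axis with the reflection in a horizontal line) sends $SU$ back to a diagram of the same shape as $U$, with the same orientation as $U$ and with the same parenthesization pattern, possibly with $\Phi$ replaced by $\Phi^{-1}$ in the other position. Concretely: the vertical reflection of Part 1 of Proposition \ref{prop:reflections} preserves the Kontsevich integral up to the pushforward, which on one strand is trivial, and the horizontal reflection of Part 2 multiplies degree-odd parts by $-1$. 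Since $U$ has no crossings, each reflection just moves caps/cups and associators around, and combined with the $(-)_{-t}$ twist we get $Z(SU) = Z(U')_{\pm t}$ for the appropriate reflected diagram $U'$.

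The main obstacle is bookkeeping: making sure the combination of reflections maps $SU$ to exactly $U$ (not its mirror with associator on the other side), and that the $-t$ signs cancel. I expect to use the reflection in a horizontal line twice or to pair it with reversal so that the total sign twist is trivial; alternatively, use that $\nu$ is independent of the associator \cite[Remark~(1)]{LeMurakami:parallel}, so that replacing $\Phi(X,Y)$ by $\Phi(-X,-Y)$ (another associator, which is what the $-t$ operation effects on horizontal associators, cf.\ \eqref{eq:assoc:reverse}) does not change $\nu$. Indeed, $D\mapsto D_{-t}$ applied to $Z_\Phi(U)$ equals $Z_{\Phi'}(U)$ with $\Phi'(X,Y)=\Phi(-X,-Y)$, since $U$ involves only the associator. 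Thus any stray $(-)_{-t}$ can be absorbed, giving $S(\nu^{-1}) = \nu^{-1}$ and hence $S(\nu)=\nu$.
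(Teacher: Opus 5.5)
Your argument is correct and is essentially the paper's proof. The paper likewise combines the rotation about a vertical axis with the reflection in a horizontal line. Together these are your rotation by $\pi$, which carries $SU$ onto $U$ itself, orientation and parenthesization included, so your caveat about $\Phi^{-1}$ is unnecessary and one gets $S(\nu^{-1})=(\nu^{-1})_{-t}$ directly. The paper then uses \eqref{eq:Z-and-S}, removes the $(\cdot)_{-t}$ by associator independence of $\nu$, and inverts using the commutativity of $\mathcal P_1$, exactly as you do.

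The only difference is that the paper evaluates with an even associator, while you switch to $\Phi(-X,-Y)$. This is fine, and it avoids invoking the existence of even associators, but you should add the short check that $\Phi(-X,-Y)$ is again an associator. Grouplikeness, \eqref{eq:assoc:inverse} and the pentagon are preserved by the algebra automorphism $D\mapsto D_{-t}$. The hexagon for $\Phi(-X,-Y)$ is the hexagon for $\Phi$ with exponentials $e^{-t_{ij}/2}$, and that follows from the original hexagon by inverting both sides, applying \eqref{eq:assoc:inverse}, and swapping strands $1$ and $2$.
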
 
	\begin{proof}
	If we rotate the tangle defining $\nu$ in \eqref{eq:nu-definition} around a vertical axis, the map $f_*$ is the identity on $\mathcal P_1$. Applying Equation \eqref{eq:rotation-around-vertical-axis}, we get that 
	\[ \nu^{-1} = Z \left(  \raisebox{-0.4\height}{\includegraphics{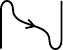}}\right). \]
	A further reflection around a horizontal line again gives $g_*$ equal to the identity on $\mathcal P_1$ and thus we have
	\[ \nu^{-1} = Z \left(  \raisebox{-0.4\height}{\includegraphics{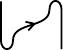}}\right)_{-t}. \]
	Since $\nu$ is independent of the associator, we can compute it using an even associator, for which each term has an even number of chords. Thus, multiplying each chord by $-1$ has no effect. 
	By \eqref{eq:Z-and-S}, the RHS is equal to $S(\nu^{-1})$. What remains to be shown is that we can invert the equality $\nu^{-1} = S(\nu^{-1})$. A priori, $S\colon \mathcal P_1 \to \mathcal P_1$ is an algebra anti-automorphism. However, $\mathcal P_1$ is commutative (\cite[Theorem 7]{BarNatan:vassiliev} or Proposition \ref{prop:insert}), and thus $S$ is an algebra automorphism. Using \eqref{eq:power}, we get $S(\nu^{-1}) = S(\nu)^{-1}$.
	\end{proof}
	
\subsection{Caps, cups and cabling}
The following elements $a, b\in \mathcal P_2$ were introduced by \cite[Section~4]{LeMurakami:parallel}, and will play a key role in our description of the deformation quantization. These elements are related to the isomorphism $(V\otimes W)^* \cong W^* \otimes V^*$ of finite-dimensional $G$-representations \cite[Proposition~1.2]{Drinfeld:quasi-hopf-algebras}. They were also used to relate horizontal reflections of tangles in \cite[Lemma~8.5]{HabiroMassuyeau:handlebodies}, under the name of cabling anomaly; and in \cite{Dancso:trivalent} to define a Kontsevich integral of trivalent graphs.
\begin{definition}
	Define $a, b \in \mathcal P_2$ by\footnote{A useful mnemonic is that the word cap contains the letter ``a''.}
	\begin{equation}\label{eq:cap-and-cup-definition}
		a \coloneq Z\left( \raisebox{-0.30\height}{\includegraphics{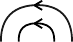}} \right) \quad \text{ and } \quad b \coloneq Z\left( \raisebox{-0.45\height}{\includegraphics{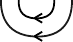}} \right).
	\end{equation}
\end{definition}
For concreteness, the first strand of $a,b \in \mathcal P_2$ is the outer strand of the cap and cup; however $a, b$ don't depend on this choice. We list this and other useful properties of $a$ and $b$; extending \cite{LeMurakami:parallel, Dancso:trivalent}.
\begin{proposition}\label{prop:properties-of-a-and-b}
	The elements $a$ and $b$ satisfy these properties.
	\begin{itemize}
	\item \textbf{strand removal:}
	\[ \varepsilon_1(a) = \varepsilon_2(a) = \varepsilon_1(b) = \varepsilon_2(b)  = 1 \in \mathcal P_1 \]
	\item \textbf{strand switching:}
	\[ a^{21} = a, \quad b^{21} = b, \]
	
	\item \textbf{orientation reversal:}
	\begin{equation}\label{eq:Sab}  a = S_1 S_2 (a), \quad b = S_1 S_2 (b), \end{equation}

	\item \textbf{reflection:}
	\begin{equation} \label{eq:ab-minus-t} a_{-t} = b ,\end{equation} 
	
	\item \textbf{action on invariants}
	\begin{equation}\label{eq:ri-of-ab} r_2(a) = r_2(b) = r_1(a) = r_1(b) = \nu^{-1},\end{equation}
	
	\item \textbf{product:}
	\begin{equation}\label{eq:product-of-ab} ab = ba = \Delta \nu \;\cdot\; \nu^{-1} \otimes \nu^{-1}, \end{equation}
	
	\item \textbf{expansion:} $a$ and $b$ are of the form 
	\[ 1 + \text{ terms in chord degree $2$ and more}, \]
	
	\item\textbf{even associators:} If the associator is even ($\Phi(-X, -Y) = \Phi(X, Y)$), then
	\begin{equation} \label{eq:ab-even-associator} a = b = \Delta \nu^{\tfrac 12} \;\cdot  \; \nu^{-\tfrac 12} \otimes \nu^{-\tfrac 12}. \end{equation}
	\end{itemize}
\end{proposition}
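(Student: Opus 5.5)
We will prove each property by interpreting it as a statement about tangles and using isotopy invariance of $Z$ together with the compatibilities of $Z$ with strand removal, orientation reversal $S_c$ (Equation \eqref{eq:Z-and-S}), cabling, and the reflections of Proposition \ref{prop:reflections}. Throughout, $a$ and $b$ are the Kontsevich integrals of a doubled cap and a doubled cup, with the parenthesization making them q-tangles. Formally, $a = Z$ of the cabling of the cap, which is built from one associator-type piece and two nested caps.

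\textbf{Strand removal.} Removing either strand leaves a single cap or cup with no associator. Its unnormalized integral is $1$, and $Z$ commutes with $\varepsilon_c$.

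\textbf{Strand switching and orientation reversal.} Rotating the doubled cap about a vertical axis (part 1 of Proposition \ref{prop:reflections}) maps it to itself with the roles of the two strands exchanged, so $a^{21}=a$. Reversing the orientation of both strands gives an isotopic tangle with the strands again exchanged, up to a rotation. Hence $S_1S_2(a)=a^{21}=a$. The same arguments apply to $b$.

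\textbf{Reflection.} Reflecting about a horizontal line (part 2 of Proposition \ref{prop:reflections}) sends a cap to a cap, but interchanges the parenthesization $(w_1w_2)w_3 \leftrightarrow w_1(w_2w_3)$ only after composing with a rotation. I would instead combine the two: the rotation by $\pi$ in the plane sends the doubled cap to the doubled cup. This rotation is the composite of the vertical-axis rotation and a horizontal reflection followed by a $\pi$ rotation in the ambient 3-space, and the latter acts on crossing-free diagrams as a planar move. This yields $a_{-t}=b$ after using $g_*=\mathrm{id}$ or the strand switch. I expect this to be the main technical subtlety. One has to check carefully which reflection turns a cap into a cup; the cleanest route is to compute directly from the definition, writing $a$ as $\Phi^{\pm1}$ cabled and then capped, and $b$ as $\Phi^{\mp1}$ cabled and cupped. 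Then \eqref{eq:assoc:reverse} together with the reversal $S$ of the capped strand converts one expression into the other with $t\mapsto -t$.

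\textbf{Action on invariants.} Applying $r_2$ to $a$ means cabling-reversing one strand and connecting it to the other. The resulting tangle is isotopic, without changing the number of critical points in an essential way, to the doubled-cap closure. By Proposition \ref{prop:insert} this collapses to the single-strand unknot diagram defining $\nu^{-1}$ in \eqref{eq:nu-definition}. The case $r_1$ follows from $a^{21}=a$, and the statement for $b$ follows from the reflection property together with the fact that $\nu$ is even (as in the proof of Proposition \ref{prop:S-nu}).

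\textbf{Product.} Stacking the cup $b$ on the cap $a$ gives the doubled unknot with its strands cabled. The unnormalized $Z$ of the doubled unknot diagram, i.e.\ the 2-cabling of the unknot from \eqref{eq:nu-definition} with one max and one min, equals $ab$ on one side. On the other side, the cabling formula of Le--Murakami expresses the 2-cable of a component through $\Delta$ applied to the normalized value, correcting the cup and cap factors. This gives $\Delta(\nu)\cdot \nu^{-1}\otimes\nu^{-1}$. Commutativity $ab=ba$ follows since the right-hand side lies in the center by Proposition \ref{prop:centerPn}, or directly by sliding via Proposition \ref{prop:insert}.

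\textbf{Expansion and even associators.} The expansion statement follows because the degree-one part of $\Phi$ vanishes, so $a$ and $b$ start in degree $2$. For even $\Phi$, the reflection property gives $a=a_{-t}=b$, so $a^2=ab$. Taking the unique square root from \eqref{eq:power} of the central element in the product formula yields \eqref{eq:ab-even-associator}.
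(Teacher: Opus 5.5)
\textbf{There is a genuine gap in the strand-switching step.} A rotation about a vertical axis does map the doubled cap to a doubled cap, but it preserves the nesting: the outer arc goes to the outer arc and the inner arc to the inner arc. What changes is that both orientations are reversed.

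So Proposition~\ref{prop:reflections}(1) together with \eqref{eq:Z-and-S} gives $S_1S_2(a)=a$. That is exactly the orientation-reversal property \eqref{eq:Sab}, and it is how the paper proves it. It gives no information about $a^{21}$. Your chain ``$S_1S_2(a)=a^{21}=a$'' reaches the right conclusion for \eqref{eq:Sab} for the wrong reason.

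Exchanging the two nested arcs requires an isotopy through a crossing. Put a crossing between the two parallel strands at the foot of one leg of the doubled cap, and slide it over the maximum to the other leg. Reading off $Z$ on both sides gives $R\cdot a=a^{21}\cdot R$ with $R=e^{t_{12}/2}$. The ingredient you are missing is that $t_{12}$ is central in $\mathcal P_2$ (Proposition~\ref{prop:centerPn}); this yields $a^{21}=a$, and likewise $b^{21}=b$.

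The gap propagates. You obtain $r_1(a)$ from $r_2(a)$ via $a^{21}=a$. Also, in the doubled snake the strand that is outer at the cap is inner at the cup, so that computation really produces $a$ and $b^{21}$, not $a$ and $b$.

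\textbf{The product step also needs repair.} Stacking the doubled cap on the doubled cup gives a closed two-component link. Its invariant lives in $\mathcal P(S^1\sqcup S^1)$, a quotient of $\mathcal P_2$ from which \eqref{eq:product-of-ab} cannot be recovered. If instead you mean the $2$-cable of the open tangle in \eqref{eq:nu-definition}, its unnormalized integral is not $ab$. It is $a$ and $b^{21}$ inserted into $\Delta(\nu^{-1})$, where $\Delta(\nu^{-1})$ is the contribution of the cabled associator, moved using Proposition~\ref{prop:insert}.

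Moreover, appealing to $\hat Z(\Delta_c T)=\Delta_c\hat Z(T)$ for a non-even associator is circular here. For $m_+=m_-$, that identity is deduced from \eqref{eq:unnormalized-cabling} using \eqref{eq:product-of-ab}. What works, and what the paper does following Le--Murakami, is plain isotopy invariance of $\hat Z$: the doubled snake is isotopic to two straight strands, so $\nu\otimes\nu\cdot Z(\text{doubled snake})=1$.

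\textbf{On the reflection step.} The reflection of Proposition~\ref{prop:reflections}(2) turns the picture upside down, so it \emph{does} send the doubled cup to the doubled cap (and associators to inverse associators). Applied directly, it gives $a_{-t}=b$. Your fallback computation just re-proves that proposition for this tangle.

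\textbf{The remaining items are fine:} strand removal, the expansion, $r_i(b)$ via $\nu_{-t}=\nu$, and the even case via uniqueness of square roots of elements of the form $1+\dots$.
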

\begin{proof}{}
\begin{itemize}
	\item \textbf{strand removal:}
	Follows from the compatibility of the strand removal with the Kontsevich integral.
	\item \textbf{strand switching:}
	This follows from the isotopy of tangles
	\[ \raisebox{-0.4\height}{\includegraphics{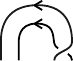}} \quad \sim \quad \raisebox{-0.4\height}{\includegraphics{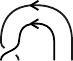}} \]
	on which the Kontsevich integral gives
	\[ R \cdot a = a^{21} \cdot R\, \]
	but $R = e^{t_{12}/2}$ is central by Proposition \ref{prop:centerPn} 
	
	\item \textbf{orientation reversal:}
	Follows from applying the first part of Proposition \ref{prop:reflections} to the doubled cap and cup. Appeared in \cite[below~Lemma~6.2]{Dancso:trivalent}.

	\item \textbf{reflection:}
	Follows from applying the second part of Proposition \ref{prop:reflections} to the doubled cup.

	\item \textbf{action on invariants} We evaluate the equality
	\[  \hat{Z} \left(\raisebox{-0.4\height}{\includegraphics{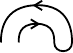}} \right) = \hat{Z}\left(\raisebox{-0.3\height}{\includegraphics{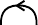}} \right) \]
	The left hand side is $\nu^{\tfrac 32}\cdot r_1(a)$, while the right hand is simply given by $\nu^{\tfrac 12}$. The other equations are proven similarly, or using the previous two points. Appeared in \cite[proof of Prop.~6.5]{Dancso:trivalent}
		
	\item \textbf{product:}
	Follows from computing the Kontsevich integral of cabling of the ``snake'', see \cite[{\textit{Important remarks}, Sec.~4}]{LeMurakami:parallel}

	\item \textbf{expansion:} The associators, of which $a$ and $b$ are built of, are both of the form $1+ \text{ terms with $2$ or more chords}$.
	
	\item \textbf{even associators:} See \cite[Lemma~4.2]{LeMurakami:parallel}.
	\end{itemize}
\end{proof}

\subsection{Cabling}
Finally, let us explain how the Kontsevich integral commutes with the operation of cabling, i.e.\ doubling a strand. This is a mild generalization of the result of Le and Murakami \cite[Thm.~4.2]{LeMurakami:parallel}.
\begin{proposition}
	Let $T$ be a tangle, $c\subset T$ its connected component and $\Delta_c T$ obtained by doubling the component $c$ and modifying the parenthesization such that the two new endpoints are parenthesized together. Then, using the insertion operation from Definition~\ref{def:chord-diagrams-operations}
	\begin{equation}\label{eq:unnormalized-cabling} Z(\Delta_c T) = \operatorname{ins}_c ( a^{m_+} \cdot b^{m_-}, Z(T) )\,.\end{equation}
	Here, $m_+$ and $m_-$ are the numbers of maxima and minima on the component $c$ the tangle.
	If the associator is even, or $m_+ = m_-$, then for the normalized Kontsevich integral we have
	\[ \hat{Z}(\Delta_c T) = \Delta_c \hat{Z}(T)  \]
\end{proposition}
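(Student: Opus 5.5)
We prove \eqref{eq:unnormalized-cabling} by decomposing $T$ into elementary pieces, following Le and Murakami \cite[Thm.~4.2]{LeMurakami:parallel}. Write $T$, as a q-tangle diagram, as a vertical composition of layers. Each layer is a disjoint union of trivial strands with exactly one elementary piece from Definition~\ref{def:Kontsevich-integral}: a crossing, a cap, a cup, or an associator between parenthesizations. Cabling the component $c$ commutes with composition and disjoint union of tangles. The insertion operation is also compatible with the product, since it acts by $\Delta_c$ on the diagram and inserts $D_r$ at a single point. By Proposition~\ref{prop:insert}, the position of the inserted piece can be moved freely along $c$. So it suffices to compare $Z(\Delta_c L)$ with $\Delta_c Z(L)$ layer by layer. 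The correction factors produced at the various layers can then be slid along $c$ and collected into a single insertion $a^{m_+} b^{m_-}$. The factors commute because $\mathcal P_2$ elements inserted on a cabled strand can be slid past each other, again by Proposition~\ref{prop:insert}.

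We now check the layers one type at a time.

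\textbf{Associators.} If $c$ passes through one of the blocks $w_i$, cabling just increases $|w_i|$ by one, with the two new endpoints parenthesized together. The defining formula $\Delta_1^{|w_1|-1}\Delta_2^{|w_2|-1}\Delta_3^{|w_3|-1}\Phi^{\pm1}$ then gives exactly $\Delta_c$ of the original value, by coassociativity of $\Delta$.

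\textbf{Crossings.} A crossing of $c$ with another strand $d$ becomes, after cabling, a crossing of a doubled strand with $d$. Its value is taken to be given by $\Delta$ applied to $e^{\pm t_{12}/2}$. Here one has to use that $Z$ of a parenthesized cabled crossing equals $\Delta_c Z$. This is the standard consequence of the hexagon and of the parenthesization conventions in \cite{LeMurakami:parallel}. If we only use Le--Murakami's generators for non-elementary crossings of parenthesized strands, it is built into the definition. A self-crossing of $c$ is handled by cabling both strands.

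\textbf{Caps and cups.} These are the only places where a correction appears. A cap on $c$ becomes a doubled cap, whose unnormalized $Z$ is, by \eqref{eq:cap-and-cup-definition}, the element $a$. By contrast, $\Delta_c$ of the empty diagram is the empty diagram. Thus each maximum contributes an insertion of $a$, and each minimum contributes an insertion of $b$. Proposition~\ref{prop:properties-of-a-and-b} (strand switching and orientation reversal) shows that the result does not depend on which strand of the cable is labelled first, or on the orientation of $c$. This is needed because $c$ may traverse the cap in either direction.

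For the normalized statement, note that $\hat Z$ multiplies $Z$ by $\nu^{1/2}$ at each cap and cup. On $\Delta_c T$, each doubled cap or cup contributes $\nu^{1/2}\otimes\nu^{1/2}$, one factor for each of the two strands. Comparing with $\Delta_c\hat Z(T)$, which contains $\Delta\nu^{1/2}$ at each critical point, we get
\[\hat Z(\Delta_cT)=\operatorname{ins}_c\big((a\,\nu^{1/2}\otimes\nu^{1/2}\,\Delta\nu^{-1/2})^{m_+}(b\,\nu^{1/2}\otimes\nu^{1/2}\,\Delta\nu^{-1/2})^{m_-},\,\Delta_c\hat Z(T)\big).\]
We have used here that $\nu\otimes\nu$ and $\Delta\nu$ are central and can be slid along the cable. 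This follows from Propositions~\ref{prop:centerPn} and~\ref{prop:insert}.

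If the associator is even, \eqref{eq:ab-even-associator} makes each bracket equal to $1$. If $m_+=m_-$, the factors pair up into $(ab)\,(\nu\otimes\nu)\,\Delta\nu^{-1}$, which equals $1$ by \eqref{eq:product-of-ab}. Pairing the factors requires that $a$, $b$ and the central elements commute, which holds since $ab=ba$.

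The main obstacle is the caps/cups step. There we must make sure that the doubled cap occurring inside $\Delta_cT$, with its particular parenthesization and position inside a larger q-tangle, really contributes the element $a$ inserted on $c$, and not a conjugate of $a$ by associators. We would handle this by isolating the cap with associators that regroup the two cabled endpoints together. These associators cancel against those already present in $\Delta_c$ of the neighbouring layers, by the pentagon-type compatibility encoded in the definition of $Z$ on general parenthesizations.
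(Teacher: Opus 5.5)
Your proposal is correct and follows the paper's route --- a generator-by-generator check adapting Le--Murakami \cite[Thm.~4.2]{LeMurakami:parallel}, where only caps and cups produce the corrections $a$ and $b$ --- and your derivation of the normalized statement from \eqref{eq:ab-even-associator} and \eqref{eq:product-of-ab} spells out what the paper leaves implicit. Two small remarks. First, Proposition~\ref{prop:insert} does not by itself let one inserted $\mathcal P_2$-factor slide past another on an interval component, so collecting the corrections into $a^{m_+}b^{m_-}$ really uses $ab=ba$ from \eqref{eq:product-of-ab}, which you do invoke later. Second, your ``main obstacle'' essentially dissolves: with the two new endpoints grouped, the cabled cap layer is just an identity q-tangle tensored with the doubled cap that defines $a$, so no extra associators appear.
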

\begin{proof}
The proof is completely analogous to \cite[Thm.~4.2]{LeMurakami:parallel}, i.e.\ it is proven by considering the equality on generators. The cap and the cup contribute the terms $a$ and $b$ to the left hand side.
\end{proof}
Let us recall the notion of twisting of associators (e.g. \cite{LeMurakami:universal}). Let $F\in \mathcal P_2$ be such that $F^{21} = F$ and $\varepsilon_1(F) = \varepsilon_2(F) = 1 \in \mathcal P_1$. Then the  associator $\Phi$ twisted by $F$ is defined as
\[  \Phi^F \coloneq \Delta_2 F \;\cdot \; 1 \otimes F \;\cdot \; \Phi \;\cdot \; F^{-1} \otimes 1 \;\cdot \; \Delta_1 F^{-1}. \]
A twisted associator can again be used to define a Kontsevich integral. 
\begin{proposition} \label{prop:twist-by-phi}
	The twist of $\Phi\in\mathcal P_3$ by $a\in \mathcal P_2$ satisfies
	\begin{equation}
	\Phi^a = \Phi_{-t}.
	\end{equation} 
	It follows that there exists $\mu \in \mathcal P_1$ such that $a = \Delta \mu \cdot \mu^{-1} \otimes \mu^{-1}$ if and only if the associator is even.
\end{proposition}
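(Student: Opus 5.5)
The main claim $\Phi^a=\Phi_{-t}$ should follow by comparing two ways of computing the Kontsevich integral of a cabled tangle. I would use the reflection result, Proposition~\ref{prop:reflections}(2), together with the cabling formula \eqref{eq:unnormalized-cabling}.

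Consider a tangle $T$ on three strands built from an associator tangle with a cap and a cup inserted so that cabling produces exactly the twist pattern. Concretely, I would take the doubled cap $a$ and relate $\Phi$ and $\Phi_{-t}$ by a "zig-zag" argument. Start from the associator tangle $(w_1w_2)w_3\to w_1(w_2w_3)$. Replace it by the isotopic tangle obtained by composing with cup/cap pairs: a cup creating a pair of reversed strands, the reflected (over/under-exchanged) associator on these reversed strands, and caps closing them up. By Proposition~\ref{prop:reflections}(2), the reflected associator contributes $\Phi_{-t}$; reversing orientations is controlled by \eqref{eq:Z-and-S}.

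Here is an alternative way to set up the same computation, which I would prefer. Rotate the associator by $\pi$ in the plane, which is a composition of reflection about the vertical axis and about the horizontal axis, applied to the picture rather than as a crossing change. Then write the $\pi$-rotated tangle as the original associator surrounded by cups and caps on each of the cabled bunches of strands. On one side, Proposition~\ref{prop:reflections} computes $Z$ of the rotation as $S_1S_2S_3$ and the permutation $^{321}$ of $\Phi^{-1}_{-t}$ or similar. On the other side, the doubled cups and caps surrounding the bunches $w_1w_2$ and $w_2w_3$ contribute factors $\Delta_1 a$, $a\otimes 1$, and so on, through \eqref{eq:unnormalized-cabling} and the definition of $a$, $b$. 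Using $a_{-t}=b$ \eqref{eq:ab-minus-t}, $S_1S_2(a)=a$ \eqref{eq:Sab}, the product relation \eqref{eq:product-of-ab}, and $S(\nu)=\nu$ from Proposition~\ref{prop:S-nu}, the $\nu$-factors should cancel (they are central by Proposition~\ref{prop:centerPn}). What remains is exactly $\Delta_2 a\cdot (1\otimes a)\cdot\Phi\cdot (a^{-1}\otimes 1)\cdot\Delta_1 a^{-1}=\Phi_{-t}$. Before applying the twist formula I would check that $a$ is a legitimate twist, i.e.\ $a^{21}=a$ and $\varepsilon_i(a)=1$; both are in Proposition~\ref{prop:properties-of-a-and-b}.

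\textbf{Main obstacle.} The hardest step is the bookkeeping: choosing the right tangle picture so that the cup/cap factors land precisely as $\Delta_2 a\cdot 1\otimes a$ on one side and $a^{-1}\otimes 1\cdot\Delta_1a^{-1}$ on the other, and not as $b$'s or inverses. I would first try to fix the sides by degree-two computations, using that the even-chord part of $\Phi$ agrees with $\Phi_{-t}$ and that $a$ starts in degree $2$.

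For the second statement, the "if" direction is the even-associator case \eqref{eq:ab-even-associator} with $\mu=\nu^{1/2}$. For the "only if" direction, suppose $a=\Delta\mu\cdot\mu^{-1}\otimes\mu^{-1}$. Since $\Delta\mu$ is central and $\mu$ is central in $\mathcal P_1$, cabled into each strand, twisting by such a coboundary is trivial. Indeed, $\Delta_2\Delta\mu$ and $\Delta_1\Delta\mu$ are both $\Delta^2\mu$, which is central in $\mathcal P_3$ by Proposition~\ref{prop:centerPn}, and the $\mu\otimes\mu$ factors cancel similarly. Hence $\Phi_{-t}=\Phi^a=\Phi$, so the associator is even.
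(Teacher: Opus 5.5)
Your preferred version is essentially the paper's proof: the paper evaluates $\hat Z$ on an isotopy between the associator and a snake-shaped tangle in which the associator sits on the backwards-running middle segment, between a tripled cap and a tripled cup. Transferring those chords through the cap and cup reverses the product and swaps $X$ and $Y$, so \eqref{eq:assoc:reverse} gives $\Phi^{\mathrm{rev}}(Y,X)=\Phi(-X,-Y)$; this is the same antipode identity that your appeal to Proposition~\ref{prop:reflections} packages. The bookkeeping you flag comes out as $\Delta_1 a\cdot(a\otimes 1)$ from the cap, $\Delta_2 b\cdot(1\otimes b)$ from the cup, and the central factors $\nu\otimes\nu\otimes\nu$ and $\Delta^2(\nu^{-1})$, after which \eqref{eq:product-of-ab} turns the $b$'s into $a^{-1}$'s and cancels every $\nu$; a degree-two check could not fix the sides, since $\Phi$ and $\Phi_{-t}$ agree through degree two, and your argument for the second statement is correct, also spelling out the ``if'' direction via \eqref{eq:ab-even-associator}.
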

\begin{proof}
Let us compute the normalized Kontsevich integral of the following isotopy
\[\begin{overpic}{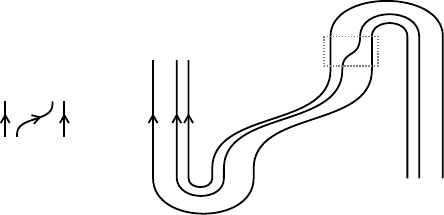}
\put(22, 20){$\sim$}
\end{overpic}\]
The left hand side is the usual associator. The right hand side has terms
\begin{align*}
 \Delta_1 a  \cdot a \otimes 1 & \quad \text{from the cap with one doubled strand, using \eqref{eq:unnormalized-cabling}}\\
\Phi & \quad\text{from the associator in the dotted rectangle}\\
 \Delta_2 b  \cdot 1 \otimes b & \quad\text{from the cup with one doubled strand, using \eqref{eq:unnormalized-cabling}} \\
  \nu \otimes \nu \otimes \nu & \quad\text{from the normalization of caps and cups,}\\
\Delta^2(\nu^{-1}) &\quad\text{from the cabling of the repeated associator.}
\end{align*}
The last two terms commute with everything, and the first three terms can be separated from them by Proposition \ref{prop:insert}. Let us move everything to the bottom right corner of the RHS.
Transferring the chords there has a non-trivial effect on the associator $\Phi(X=t_{12},Y=t_{23})$ in the dotted rectangle: it switches the role of $X$ and $Y$, and completely reverses the order of multiplication. By \eqref{eq:assoc:reverse}, we have
\[ \Phi^{\text{rev}}(Y, X) = \Phi^{-1}(-Y, -X) \stackrel{\eqref{eq:assoc:inverse}}{=}  \Phi(-X, -Y).\]
The whole equality of tangles above gives
\[ \Phi = \Delta^2(\nu^{-1}) \; \cdot \; \nu \otimes \nu \otimes \nu \; \cdot \; \Delta_2 b  \; \cdot \; (1 \otimes b) \; \cdot \; \Phi_{-t} \;\cdot \; \Delta_1 a  \;\cdot \; (a \otimes 1) \]
 Moving the non-associator terms to the other side and expressing $b$ in terms of $a$ using \eqref{eq:product-of-ab}, we get
\[ \Delta_2 a \; \cdot \; (1 \otimes a) \;\cdot\; \Phi \;\cdot \; \Delta_1 a^{-1}  \;\cdot \; (a^{-1} \otimes 1) = \Phi_{-t}.  \]
which is what we wanted to prove.

The second claim follows from the fact that a twisting preserves an associator if the twist is of the form $\Delta \mu \cdot \mu^{-1} \otimes \mu^{-1}$ (Prop. \ref{prop:insert}).
\end{proof}

\section{Spines and quantizations}

We will now use Kontsevich integral to describe the star products constructed by Li-Bland and Ševera in \cite{LBS:DQ}. Their star product depends on decomposition of the surface into a gluing of disks, and the order of the gluings. We will do away with some of this dependence immediately. However, to proceed further, we will twist the fusion of \cite{LBS:DQ} which will reduce the necessary input data to a uni-trivalent spine. After that, we can go back to the original (untwisted) star-products of \cite{LBS:DQ} and characterize the additional decoration on spines needed to specify their star products. In both cases, we get explicit isomorphism between star products which are closed under composition.

\subsection{Fusion and reduction}
The star products in \cite{LBS:DQ} are constructed by gluing the marked surface $\Sigma$ from disks with two marked points on the boundary. If needed, reductions, which remove marked points on the boundary, are performed. On the level of star products, this corresponds to the application of the functors fusion and reduction from Definition \ref{def:fusionreduction}. \medskip

In the quantum case, there is an important choice of sign invisible on the semiclassical level, which we now describe.
Following \cite{LBS:DQ}, we fix a Casimir Lie algebra $(\g, t)$ and denote by $\bar{\g}$ the Casimir Lie algebra $(\g, -t)$. We will choose a marking of each $v\in V$ with a sign $\pm$, i.e.\ a decomposition of $V$ into $V_+ \sqcup V_-$. This marking determines the category in which the deformation quantization of  the moduli space of flat connections will live: the following theorem gives an iterative way to construct an algebra in 
\[ \Umod{\g^{V_+}\oplus \bar{\g}^{V_-}}. \]
\begin{theorem}[\cite{LBS:DQ}]\label{thm:LBS-quantization}
	Let $(\Sigma, V = V_+ \sqcup V_-)$ be a marked surface with marked points decorated by $\pm$. Let $M_{\Sigma, V}(G)$ be the corresponding moduli space of flat connections and $A\in \Umod{\g^{V_+}\oplus \bar{\g}^{V_-}}$ a deformation quantization of this space. 
	\begin{itemize}
		\item Let $(\Sigma_\text{f}, V_\text{f})$ be obtained by corner connected sum of two marked points in $V$ 		
		\[ \raisebox{-0.4\height}{\begin{overpic}{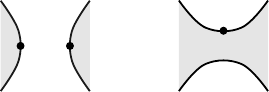}
				\put(9.5, 15){$+$}
				\put(17.5, 15){$+$}
				\put(80, 26){$+$}

		 \end{overpic}}\;, \quad \quad \quad \raisebox{-0.4\height}{\begin{overpic}{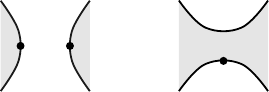}
						\put(9.5, 15){$-$}
				\put(17.5, 15){$-$}
				\put(80, 6.5){$-$}
		 \end{overpic}}\,.\]
		 Then $T(A)$, the \emph{fusion} of $A$, is a deformation quantization of $M_{\Sigma_\text{f}, V_\text{f}}$. Here, the two factors $\g$ on which fusion (and hence $J\in \mathcal{P}_4$) acts are specified by the ordered pair of the two marked points. There are two cases, depending on the sign of the marked points.

		\item If $(\Sigma, V\setminus \{v\})$ is obtained by removing a marked point $v\in V$ (of any sign),  then $A^\mathfrak{g}$, the \emph{reduction} of $A$ w.r.t the action at $v$, is a deformation quantization of $M_{\Sigma, V\setminus \{ v\}}$.
	\end{itemize}
\end{theorem}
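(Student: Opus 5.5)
The plan is to reduce both claims to two facts: monoidal functors preserve algebras, and the first-order terms of $J$ and of the braiding reproduce the classical fusion and Poisson data. The base case is the disk with two marked points. Its algebra $C^\infty(G)$ is commutative, and in the relevant category the braiding is trivial on it, which matches $\pi=0$. I take this base case as given, since the theorem is quoted from \cite{LBS:DQ}, and prove the inductive steps.

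\textbf{Fusion.} First, $T$ is a strong monoidal functor with monoidal structure $J$ (Definition \ref{def:fusionreduction}). Hence $T(A)$, with multiplication $m\circ J_{A,A}$, is an associative algebra in $\Umod{\g\oplus\h}$; here $\h$ collects the remaining factors.

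For $\hbar=0$, note that $J = 1 + O(\hbar)$, since $Z$ of any tangle is $1$ plus chord terms. So $m_0$ is unchanged, and $(T(A),m_0)$ is still $C^\infty(M)$ with $G$ acting diagonally, exactly as for the classical fused manifold.

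For the first order, expand $J$ to one chord. Its tangle contains a single crossing between the middle strands $2,3$, and the associators contribute only in degree $\ge 2$. So $J = 1 + \tfrac{\hbar}{2}(\pm t_{23}) + O(\hbar^2)$. The strands $2,3$ correspond to the $v$-action on the first factor and the $v'$-action on the second factor. This gives $m_1^{\mathrm f}(f,g) = m_1(f,g) \pm \tfrac12 t^{ij}(e_i)_1 f\,(e_j)_2 g$, after accounting for the weight of a chord acting by vector fields and the sign conventions for left/right actions.

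Antisymmetrizing yields exactly the fusion term \eqref{eq:fusion-on-functions}. The sign is fixed by the ordering of $v,v'$ and by the sign decoration. For $V_-$ the Casimir is $-t$, which flips the crossing contribution and the position of $v''$. This matches the two pictures.

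\textbf{Reduction.} Reduction is lax monoidal, so $A^\g$ is an algebra. It lives in $\Umodnp{\h}$, since the associator acts on $\g$-invariants through the $\h$-part only: the $\g$-Casimir components vanish on invariants by Proposition \ref{prop:action-on-invariants}. The zeroth and first orders are then simply the restrictions to invariant functions. The quasi-Poisson bracket restricted to $G_v$-invariant functions is the bivector of the reduced framed moduli space, which gives the claim classically.

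\textbf{Main obstacle.} The delicate step is the sign bookkeeping at first order. I will have to identify which strands of $J$ carry the chord, fix the orientation conventions relating $L_g$, $R_{g^{-1}}$ to $U\g$-actions, and handle the $\bar\g$ case. I would check this by comparing directly with the convention \eqref{eq:fusion-of-bivectors} on the two-disk example.
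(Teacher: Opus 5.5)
Your route is the paper's: associativity comes from $T$ being monoidal, $J=1+O(\hbar)$ leaves $m_0$ unchanged, and the real content is the first-order term of $J$, antisymmetrized and compared with \eqref{eq:fusion-on-functions}. The $\bar\g$ case is handled by absorbing the sign of $-t$ into an exchange of the two actions, i.e.\ moving $v''$ to the other side. The paper proves only this fusion computation and takes the reduction part from \cite{LBS:DQ}. Your reduction paragraph is fine, except that the $\g$-components of $t_{X,Y}$ vanish on $X^\g\otimes Y^\g$ simply because each tensor factor is separately invariant. Proposition \ref{prop:action-on-invariants} is about diagonal invariants and is not the relevant fact.

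The gap is in the one step the paper's proof consists of, which is needed because the conventions differ from \cite{LBS:DQ}: you leave the sign as $\pm$, and your tentative strand identification is backwards. In $J_{X_1,X_2,Y_1,Y_2}$ the strands $1,2,3,4$ are $X_1,X_2,Y_1,Y_2$. So the single overcrossing contributes $+\tfrac12\hbar\,t_{23}=\tfrac12\hbar\,t_{f_2,g_1}$, coupling the \emph{second} action (at $v'$) on $f$ with the \emph{first} action (at $v$) on $g$. Hence
\[ m_1^{\mathrm f}(f,g)=m_1(f,g)+\tfrac12 t^{ij}(e_i)_2(f)\,(e_j)_1(g), \]
and antisymmetrizing with $t^{ij}=t^{ji}$ gives
\[ -\tfrac12 t^{ij}\bigl[(e_i)_1(f)(e_j)_2(g)-(e_i)_1(g)(e_j)_2(f)\bigr], \]
which is exactly \eqref{eq:fusion-on-functions}, with $v''$ placed as in the first picture. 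With your assignment ($v$ on $f$, $v'$ on $g$) and the positive crossing, you would get the opposite sign, i.e.\ the fused point on the wrong side. The left/right vector-field conventions you worry about play no role. Both the quantum and the classical first-order terms are products of derivatives along the same two actions, so any sign convention squares away. For $V_-$ the chord has weight $-\hbar t$, and the same computation yields the fusion term with actions $1$ and $2$ exchanged, which is the second picture. With the strands corrected and the sign fixed in this way, your argument is complete and matches the paper's.
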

\begin{proof}
Since our conventions differ from \cite{LBS:DQ}, let us prove the claim about fusion. To the first order, the product of two functions $f, g$ is modified by the action of $\tfrac 12 t_{f_2, g_1}$. After antisymmetrization, we obtain the fusion term of \eqref{eq:fusion-on-functions} with the minus sign there. The fusion at vertices decorated with minus acts via $-t$, and this additional sign can be absorbed by switching the order of the actions in the quasi-Poisson fusion, which corresponds to moving the point to the other boundary\footnote{This minus sign can also be implemented by using an element $J$ with an undercrossing instead. This gives isomorphic algebras, as the fusions are monoidally isomorphic via $e^{\tfrac 12 t_{12}}$.} 
\end{proof}

Finally, we need to specify a building block, which is given as follows.
\begin{proposition}[{\cite[Example~1]{LBS:DQ}}]\label{prop:building-block}
	The undeformed commutative product on $C^\infty(G, \mathbb{C}) \in \Umod{\g \oplus \bar{\g}}$ provides a deformation quantization of the (vanishing) $G^2$-quasi-Poisson structure on the moduli space of flat connections on the disk with two marked points on the boundary.
	\begin{equation}\label{eq:disk-pm}
	\raisebox{-0.4\height}{
	\begin{overpic}{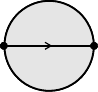}
		\put(-19, 41.6){$-$}
		\put(102, 41.6){$+$}
	\end{overpic}
	}
	\end{equation}
\end{proposition}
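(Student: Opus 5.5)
Since the algebra in question carries the undeformed commutative product, the plan is to check the two defining requirements of a deformation quantization directly: first, that $(C^\infty(G,\mathbb C), m_0)$ is an associative algebra internal to $\Umod{\g\oplus\bar\g}$; second, that its first-order antisymmetrized term reproduces the quasi-Poisson bivector of the disk with two marked points. The second requirement is immediate. We have $m_1=0$, and by the first bullet of the existence theorem for the quasi-Poisson bivectors, $\pi=0$ on this disk. So the whole argument reduces to associativity in the Drinfeld category.

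The $\g\oplus\bar\g$-action on $C^\infty(G)$ comes from the $G\times G$-action $(g_1,g_2)\triangleright x = g_2 x g_1^{-1}$. Here the $-$ vertex is the source and carries $\bar\g$, while the $+$ vertex is the target and carries $\g$. Pointwise multiplication is $\g\oplus\g$-equivariant, since the group acts by diffeomorphisms, so $m_0$ is a morphism in the category. Associativity in $\Umod{\g\oplus\bar\g}$ means
\[ m_0(m_0\otimes 1) = m_0(1\otimes m_0)\circ \Phi_{\g\oplus\bar\g}, \]
where $\Phi_{\g\oplus\bar\g}$ is the product of $\Phi(\hbar t_{12},\hbar t_{23})$ for the $\g$ factor and $\Phi(-\hbar t_{12},-\hbar t_{23})$ for the $\bar\g$ factor. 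Both factors act on $C^\infty(G)^{\otimes 3}$, that is, on functions on $G^3$.

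The key step is to show that, after composing with the triple multiplication $C^\infty(G)^{\otimes3}\to C^\infty(G)$ (restriction to the diagonal), the associator acts trivially. On the diagonal, a left-invariant vector field, i.e.\ the generator of the $+$ action, can be traded for a right-invariant one via $\Ad$. Concretely, $L_e$ acting on the $i$-th factor and then restricted to the diagonal equals $R_{\Ad_{x^{-1}}e}$, which is $x$-dependent. More cleanly, I would use the invariance of $t$: the two actions combine so that the total contribution of a chord $t_{ij}$ evaluated on the diagonal is $w_\g(t_{ij})$ for the left action plus $w_{\bar\g}(t_{ij})$ for the right action. For each $x$ these two terms are related by $\Ad_x$, which preserves $t$, and the sign flip of $\bar\g$ makes them cancel order by order.

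I expect this cancellation to be the main obstacle, since the two chords act as differential operators on different variables. The approach I would try first is geometric: identify $C^\infty(G)\cong$ (a completion of) $\bigoplus V\otimes V^*$ by Peter--Weyl, or in the algebraic setting by the matrix coefficients $\mathcal O(G)$. Under this identification the $+$ action is on $V$ and the $-$ action is on $V^*$ with Casimir $-t$. Multiplication of matrix coefficients is the map $(V_1\otimes V_1^*)\otimes(V_2\otimes V_2^*)\to (V_1\otimes V_2)\otimes(V_1\otimes V_2)^*$, followed by projection. Associativity then follows from the fact that $V\mapsto V^*$ reverses the associator: the dual of $\Phi$ acting on $(V_1\otimes V_2\otimes V_3)^*$ with $-t$ on duals is exactly the inverse constraint needed. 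This is the standard statement that $\mathcal O(G)$ is the coend $\int^V V\otimes V^*$ and is an algebra in $\mathcal C\boxtimes\mathcal C^{\mathrm{rev}}$. The point to verify is that $\bar\g$ realizes $\mathcal C^{\mathrm{rev}}$ (opposite braiding and inverse associator up to the twist), which is consistent with the relation $\Phi^{\text{rev}}(-X,-Y)=\Phi^{-1}(X,Y)$ from \eqref{eq:assoc:reverse}. This settles the algebraic case, and density extends the result to smooth functions.
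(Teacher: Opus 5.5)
The paper gives no argument of its own here (the statement is quoted from \cite{LBS:DQ}), so this review judges your proof on its merits. Your reduction to the identity $m^{(3)}\circ\Phi_{\g\oplus\bar\g}=m^{(3)}$ is correct, where $m^{(3)}$ is restriction to the diagonal of $G^3$. The route you actually commit to, however, does not prove the statement as posed, for two reasons.

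First, scope. Peter--Weyl needs $G$ compact, and $\mathcal O(G)$ needs $G$ linear algebraic, whereas the paper's $G$ is an arbitrary connected Lie group. In general, matrix coefficients of finite-dimensional representations are not dense in $C^\infty(G)$; for $\widetilde{SL_2(\mathbb R)}$ they do not even separate points. So ``density'' does not finish the argument.

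Second, the decisive step is asserted rather than checked, and your hedge ``up to the twist'' is exactly what would break the claim. The covariant identification of $\Umodnp{\bar\g}$ with ${\Umodnp{\g}}^{\text{br.\ op}}$ carries the nontrivial monoidal structure $a$ (see the proposition in the paper's ``Loose ends'' subsection). A nontrivial twist would a priori change the product, so the coend statement alone says nothing about the \emph{untwisted} pointwise product. Your preceding sentence states the right claim, and it is a one-liner. Under the canonical pairing $(V_1\otimes V_2\otimes V_3)^*\cong V_1^*\otimes V_2^*\otimes V_3^*$, the transpose of $\Phi(\hbar t_{12},\hbar t_{23})$ is the contragredient action of $S^{\otimes 3}\Phi(\hbar t_{12},\hbar t_{23})$. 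Here $S$ is the antipode, which fixes each $t_{ij}$ and reverses products, so this equals $\Phi^{\text{rev}}(\hbar t_{12},\hbar t_{23})=\Phi(-\hbar t_{12},-\hbar t_{23})^{-1}$ by \eqref{eq:assoc:reverse}. That is exactly the inverse of the $\bar\g$-associator. With this line your matrix-coefficient argument is complete for compact or linear algebraic $G$, and braidings and coends play no role.

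For general $G$, go back to your first approach: it closes, and the obstacle you anticipated is not there. Since $e^{-se}x=x\,e^{-s\Ad_{x^{-1}}e}$, take any $F\in C^\infty(G^3)$ and slots $i\neq j$. The mixed second derivative of $F$ along the left action, evaluated at $(x,x,x)$, equals the mixed derivative along the right action in the directions $\Ad_{x^{-1}}e$ and $\Ad_{x^{-1}}e'$; the two signs cancel. Contracting with $t$ and using $(\Ad_{x^{-1}}\otimes\Ad_{x^{-1}})t=t$, the $\g$-part of the chord $t_{ij}$ restricted to the diagonal equals the right-action chord weighted with $t$. The $-t$ coming from $\bar\g$ cancels it.

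So for the Casimir $t\oplus(-t)$ of $\g\oplus\bar\g$ you get the operator identity $m^{(3)}\circ t_{ij}=0$. It holds for every input, in particular after other chords have already acted. The associator of $\Umod{\g\oplus\bar\g}$ is $\Phi(\hbar t_{12},\hbar t_{23})$ for this Casimir. Each word of positive length in $\Phi-1$ is a composite whose last-applied factor is $t_{12}$ or $t_{23}$, so it is killed by $m^{(3)}$.

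Hence $m^{(3)}\circ\Phi=m^{(3)}$ for every Lie group and for smooth functions. You never need to propagate the cancellation through products of chords, and you do not even use \eqref{eq:assoc:reverse}. Geometrically, this says that the infinitesimal stabilizers $\{(e,\Ad_x e)\}$ of the $G\times G$-action on $G$ are coisotropic with respect to $t\oplus(-t)$.
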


\paragraph{Quantization procedure of \cite{LBS:DQ}:} We start with $N$ building blocks \eqref{eq:disk-pm}, and then repeatedly fuse the marked points and possibly remove them by taking invariants. Applying the corresponding monoidal functors of fusion and reduction gives an element of $D\in \mathcal{P}_{4N}$ made of $J$'s, which acts on a pair of elements of $C^\infty(G^N, \mathbb{C})^{\g^\text{red}}\cong C^\infty(M_{\Sigma, V_+\sqcup V_-}(G), \mathbb{C})$. The star product of $f, g \in C^\infty(G^N,\mathbb{C})^{\g^\text{red}}$ is given by 
\[ f\star g = m_0 \circ w_{\g^{V_+}\oplus \bar{\g}^{V_-}}(D) (f\otimes g) \]
where we see $f\otimes g$ as an element in $C^\infty(G^N,\mathbb{C})\otimes C^\infty(G^N,\mathbb{C})$, with the $\g^{4N}$-action coming from the left and right-invariant vector fields on the $2N$ copies of $G$.  
\smallskip

\paragraph{Dependence on choices} \label{par:dependence-on-choices} This star product depends on how we decompose the surface $\Sigma$ into disks \eqref{eq:disk-pm}, as well as the order of the reductions and fusions. The decomposition of $\Sigma$ into disks can be equivalently encoded in a skeleton: each disk corresponds to one edge of the \emph{skeleton}. Since fusions and reductions at different vertices obviously commute, we only need to remember the order of operations at each vertex. This can be encoded by
\begin{itemize}
\item linearly ordering and parenthesizing the half-edges at each vertex; this specifies the order of the fusions. We will represent this data by resolving an $n$-valent vertex into a planar binary tree with $n$ leaves, as in \eqref{diag:two-ways-fourvertex} below;
\item additionally remembering if reduction (taking $G$-invariants) was performed at the vertex, which will be represented by placing marked point to the interior of the surface. In order to specify from which boundary component it came from, we connect it to this component by a dotted line, as in \eqref{diag:two-ways-twovertex}.\footnote{These dotted lines were called cilia by \cite{FockRosly}, but they used them for vertices where reduction was not performed.}
\end{itemize}
Our next goal is to show that one can forget some of the above information. First, we do away with the dotted line and the linear order at internal vertices. Then, we show that  one can move fusions between neighboring vertices, which will reduce the necessary information to a \emph{uni-trivalent spine}.

\subsection{The disk with two and three marked points: a case study}\label{sec:case-study}
As a warm-up for working with Kontsevich integral in this setting, we will describe deformation quantizations of disks with two and three marked points.

\paragraph{Disk with two marked points:}
If we want to construct a deformation quantization of a disk with the same sign on both marked points, we can proceed by gluing two disks \eqref{eq:disk-pm} together and reducing at the created vertex.
\begin{proposition} \label{prop:reduction-and-opposite-fusion} Let $T$ and $T^\text{op}$ be the functors $\Umod{\g^{\oplus 2}} \to \Umodnp{\g}$ given by the usual and reversed \footnote{This means that the role of the $\g$-actions is reversed, or in other words we act with $J^{2143}$.} fusion. Then the composed functors 
\begin{equation} \label{diag:T-Top-inv} \begin{tikzcd}
	{\Umod{\g^{\oplus2}}} & {\Umodnp{\g}} & {\mathrm{Vect}}
	\arrow["T", curve={height=-18pt}, from=1-1, to=1-2]
	\arrow["{T^\text{op}}"', curve={height=18pt}, from=1-1, to=1-2]
	\arrow["{\g\text{-inv}}", from=1-2, to=1-3]
\end{tikzcd}\end{equation}
agree as monoidal functors.
\end{proposition}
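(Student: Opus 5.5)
We need to show that the two composites $\g\text{-inv}\circ T$ and $\g\text{-inv}\circ T^{\text{op}}$ agree as monoidal functors. The plan is to check the underlying functors first, then compare the monoidal structures, and to reduce the comparison to an identity between chord diagrams that follows from the cyclic symmetry of $J$ and Proposition \ref{prop:operation-r-on-chord-diagrams}.

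\emph{Underlying functors.} Both $T$ and $T^{\text{op}}$ are pullback along the diagonal $\g\to\g\oplus\g$; they differ only in which copy is listed first. So both composites send $X$ to $X^{\g}$, with $\g$ acting diagonally, and agree on morphisms.

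\emph{Monoidal structures.} The monoidal structure of $\g\text{-inv}\circ T$ on $(X,Y)$ is the inclusion $X^{\g}\otimes Y^{\g}\hookrightarrow (X\otimes Y)^{\g}$ followed by the action of $w(J)$, with $J\in\mathcal P_4$ acting on the four factors $(X_1,X_2,Y_1,Y_2)$. For $T^{\text{op}}$ the same recipe uses $J^{2143}$. It therefore suffices to show that $w(J)$ and $w(J^{2143})$ agree on $X^{\g}\otimes Y^{\g}$. Here the invariance is separate: $X$ is invariant under the diagonal action of its two factors, and likewise $Y$.

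\emph{Chord-diagrammatic reformulation.} On vectors invariant under the diagonal action of strands $1,2$, a chord ending on strand $1$ can be traded for minus the same chord ending on strand $2$, with ordering corrections. This is exactly what the cap construction of Propositions \ref{prop:action-on-invariants} and \ref{prop:operation-r-on-chord-diagrams} encodes. Concretely, we apply $r$-type operations to both pairs $(1,2)$ and $(3,4)$: reverse strand $1$, join it to strand $2$ by a cap, and likewise join strand $3$ to strand $4$. Call the resulting operation $\rho\colon\mathcal P_4\to\mathcal P_2$. The action of $D$ on $X^{\g}\otimes Y^{\g}$ depends only on $\rho(D)$.

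We then need $\rho(J)=\rho(J^{2143})$. The tangle $J$ is symmetric under rotation about a vertical axis, by Proposition \ref{prop:reflections}, which gives $J^{4321}=J$. Applying the same rotation to the tangle obtained by capping $J$ should turn $\rho(J^{2143})$ into $\rho(J^{4321})$ up to relabelling. Since $J^{4321}=J$, we get $\rho(J)=\rho(J^{2143})$.

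If this relabelling argument turns out to be cleaner at the level of tangles, we would instead draw the capped tangles directly. The tangle for $T$ after capping $1$ to $2$ and $3$ to $4$ is isotopic to the tangle for $T^{\text{op}}$ after capping. The isotopy rotates each capped pair by a half-turn, sliding the caps around. We would then compute the Kontsevich integrals, absorbing the cap and cup factors $a$, $b$, $\nu$ via Proposition \ref{prop:properties-of-a-and-b}, especially \eqref{eq:ri-of-ab}.

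\emph{Main obstacle.} The hard step is making the reduction ``action on invariants depends only on $\rho(D)$'' precise when two separate invariance conditions are used at once. We also have to check that the cap and normalization factors, namely the $\nu$'s and the $a$, $b$ contributions, are the same on both sides. We will handle both points by using unnormalized $Z$ throughout, so that each side carries identical cap and cup counts.
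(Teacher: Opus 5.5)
There is a genuine gap at the step where you deduce $\rho(J)=\rho(J^{2143})$ from $J^{4321}=J$. If you track the components, the rotation about the vertical axis does map the capped tangle of $J^{2143}$ onto the capped tangle of $J$. However, it sends the component through the top endpoints $1,3$ (the one acting on $X$) to the component through the top endpoints $2,4$ (the one acting on $Y$). So the rotation actually gives $\rho(J^{2143})=\rho(J)^{21}$. The ``relabelling'' you set aside is the exchange of the roles of $X^\g$ and $Y^\g$, and that is exactly what has to be proven.

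Note also that the capped tangles for $T$ and $T^{\mathrm{op}}$ are the same unoriented tangle with both components reversed. Hence $\rho(J^{2143})=S_1S_2\,\rho(J)$ by \eqref{eq:Z-and-S}. In particular, they are not isotopic as oriented tangles relative to the boundary, since the orientations at every endpoint are opposite. So your fallback ``isotopy'' route fails as stated. Neither $S_1S_2$-invariance nor $21$-invariance of $\rho(J)$ follows from $J^{4321}=J$.

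The missing idea is to identify the capped tangle explicitly. After joining the two pairs of strands at the bottom of $J$, the tangle is isotopic, keeping its two minima, to a doubled cup followed by a single crossing. This gives $b\cdot e^{-t/2}$ for one choice of orientations and $S_1S_2(b\cdot e^{-t/2})=e^{-t/2}\cdot S_1S_2(b)$ for the other. The relevant input from Proposition \ref{prop:properties-of-a-and-b} is then the orientation-reversal symmetry $S_1S_2(b)=b$ of \eqref{eq:Sab}, together with centrality of $t$. This is the paper's proof. Alternatively, the strand-switching symmetry $b^{21}=b$ would close your rotation argument. The property \eqref{eq:ri-of-ab} that you cite plays no role here.

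Conversely, the obstacle you single out is not one. The two invariance conditions live on different tensor factors, so the trade $u^{X_1}x=S(u)^{X_2}x$ for invariant $x$ applies to $x$ and $y$ independently.
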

Applying the two functors to $C^{\infty}(G\times G,\mathbb{C}) \in \Umod{\g^{\oplus 2} \oplus \bar{\g}^{\oplus 2}}$, we get a (quantization of the moduli space of the) disk with two points on the boundary marked $-$ and with one internal bivalent vertex, coming from opposite sides of the disk.
\begin{equation}\label{diag:two-ways-twovertex}  \raisebox{-0.4\height}{\begin{overpic}{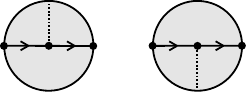}
	\put(-9, 16.7){$-$}
	\put(16.2, 10.5){$+$}
	\put(40, 16.7){$-$}
	\put(52, 16.7){$-$}
	\put(76.8, 22.5){$+$}
	\put(100.5, 16.7){$-$}
\end{overpic}}  \end{equation} 
Proposition \ref{prop:reduction-and-opposite-fusion} tells us that the two star products are equal.
\begin{proof}\!\!\footnote{See \cite[Proposition 4.1.1]{Jan:thesis} for an   alternative proof.}
	The top composite functor in \eqref{diag:T-Top-inv} has a monoidal structure given by the following map of $U\g$-modules\footnote{Note that the composition of monoidal constraints is ``in the opposite order'' relative to the composition of functors: for two monoidal functors $G\circ F$, the monoidal structure is $ G(\mu^F_{X, Y})\circ \mu^G_{FX, FY}\colon GFX \otimes GFY \to G(FX \otimes FY)\to GF(X\otimes Y) $.} 
	\[  X^\g \otimes Y^\g \hookrightarrow (X\otimes Y)^\g \xrightarrow{J_{X_1, X_2, Y_1, Y_2}} (X\otimes Y)^\g. \]
	Let us explain the notation, since we will be using in the following. Both $X$ and $Y$ are $U(\g^{\oplus 2})$-modules, and labels $X1, X2$ label the two $\g$ actions. The element $J\in \mathcal P_4$ determines a morphism $J_{X1, X2, Y1, Y2}\colon X\otimes Y \to X\otimes Y$ given by an action of $w_\g(J)$ via the four specified Lie algebra actions. 	

	The action of $J$ on $X^\g \otimes Y^\g$ can be expressed using only an action of $X_1$ and $Y_1$, by reversing the strands $X_2$, $Y_2$ and connecting them; in other words it's given by the action of the \emph{unnormalized} Kontsevich integral of the tangle on the right
	\vspace*{0.5em}
	\[ \raisebox{-0.5\height}{\begin{overpic}{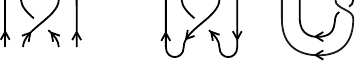}
	\put(1.5, -0.5){\makebox[0pt]{\scaleobj{0.75}{X_1}}}
	\put(7.5, -0.5){\makebox[0pt]{\scaleobj{0.75}{X_2}}}

	\put(63, 18){\makebox[0pt]{\scaleobj{0.75}{X_1}}}
	\put(69, 18){\makebox[0pt]{\scaleobj{0.75}{Y_1}}}
	
	\put(95, 18){\makebox[0pt]{\scaleobj{0.75}{X_1}}}
	\put(101, 18){\makebox[0pt]{\scaleobj{0.75}{Y_1}}}
	
	\put(32, 8){$\leadsto$}
	
	\put(71,8){$\sim$}
	
		\put(17, -0.5){\makebox[0pt]{\scaleobj{0.75}{Y_1}}}
	\put(23, -0.5){\makebox[0pt]{\scaleobj{0.75}{Y_2}}}
	\end{overpic}}. \]
	The isotopy above shows that the monoidal structure is given by $K_{X_1, Y_1}$, where
	\[ K \coloneq b \cdot e^{-t/2}, \]
	see \eqref{eq:cap-and-cup-definition}.
	The monoidal structure on the bottom functor in \eqref{diag:T-Top-inv} is given by the action
			\[  X^\g \otimes Y^\g \hookrightarrow (X\otimes Y)^\g \xrightarrow{J_{X_2, X_1, Y_2, Y_1}}(X\otimes Y)^\g. \]
	Expressing everything in terms of the actions $X_1, Y_1$, get that the monoidal structure is given by the tangle
	\[ \raisebox{-0.4\height}{\begin{overpic}{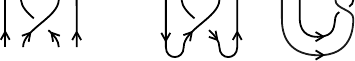}
	\put(1.5, -0.5){\makebox[0pt]{\scaleobj{0.75}{X_2}}}
	\put(7.5, -0.5){\makebox[0pt]{\scaleobj{0.75}{X_1}}}

	\put(63, 18){\makebox[0pt]{\scaleobj{0.75}{X_1}}}
	\put(69, 18){\makebox[0pt]{\scaleobj{0.75}{Y_1}}}
	
	\put(95, 18){\makebox[0pt]{\scaleobj{0.75}{X_1}}}
	\put(101, 18){\makebox[0pt]{\scaleobj{0.75}{Y_1}}}
	
	\put(32, 8){$\leadsto$}
	
	\put(71,8){$\sim$}
	
		\put(17, -0.5){\makebox[0pt]{\scaleobj{0.75}{Y_2}}}
	\put(23, -0.5){\makebox[0pt]{\scaleobj{0.75}{Y_1}}}
	\end{overpic}}\]
	In other words, the monoidal structure is given by
	\[ \tilde{K} \coloneq e^{-t/2}\cdot S_1 S_2(b) \]
	Since by Proposition \ref{prop:properties-of-a-and-b} the element $b$ is symmetric w.r.t the reversal of the both strands, and $t$ is in the center of $\mathcal P_2$, we get that $\tilde{K} = K$.
\end{proof}
Similarly, we can glue the points labeled with $-$ and reduce, giving a quantization of
\[  \raisebox{-0.4\height}{
\begin{overpic}{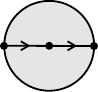}
	\put(-20, 41.5){$+$}
	\put(41, 51){$-$}
	\put(102,41.5){$+$}
\end{overpic}
}. \]  
Using \eqref{eq:ab-minus-t}, the quantization is given by the action of $K_{-t} = a \cdot e^{t/2}$. On a pair of functions $f, g \in C^\infty{(G, \mathbb{C})} \cong C^\infty (G\times G, \mathbb{C})^G$, the star product is given by
\[ m_0 \circ (a\cdot e^{t/2})^{L_f; L_g} (f\otimes g)  = m_0 \circ (a\cdot e^{t/2})^{-R_f; -R_g} (f\otimes g) ,  \]
where the weight $w_{\g}(a\cdot e^{t/2}) \in U\g\otimes U\g$ acts on the two functions by left or right invariants vector fields, as specified by the superscripts; these actions are equal since $w_{\g}(a\cdot e^{t/2})$ is $\Ad$-invariant.

\paragraph{Disk with three marked points}
We now consider fusing three disks into one, first without any reduction. First, we relate the two different parenthesizations of this double fusion, see also \cite[Thm~2.1]{LBS:coherent}.
\begin{proposition}\label{prop:two-fusions}
Let $T_{(3)}$ and $T'_{(3)}$ be the monoidal functors $\Umod{\g^{\oplus 3}} \to \Umodnp{\g}$ given by repeated fusion in left and right parenthesization. Then the natural isomorphism $T_{(3)} \Rightarrow T'_{(3)}$, with components given by the action of the associator $\Phi$, is monoidal.
\end{proposition}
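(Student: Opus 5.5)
To prove that the associator gives a monoidal natural isomorphism $T_{(3)} \Rightarrow T'_{(3)}$, I will translate everything into chord diagrams in $\mathcal P_6$ and verify one identity there using isotopy invariance of the Kontsevich integral. The functors $T_{(3)}$ and $T'_{(3)}$ act by pullback along the diagonal $\g\to\g^{\oplus 3}$. So a module $X\in\Umod{\g^{\oplus3}}$ carries three $\g$-actions $X_1,X_2,X_3$, and the underlying objects are related by the associator of $\Umodnp{\g}$ evaluated on the diagonal action. Concretely, the component at $X$ is the action of $\Phi_{X_1,X_2,X_3}$, and naturality is automatic because $\Phi$ acts through $U\g^{\otimes 3}$. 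It remains to check monoidality.

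\textbf{Step 1: the monoidal structures as elements of $\mathcal P_6$.} Using the composition rule for monoidal constraints recalled in the footnote to the proof of Proposition~\ref{prop:reduction-and-opposite-fusion}, I will write the monoidal structure of $T_{(3)}$ on $X\otimes Y$ as a product of two $J$'s, acting on the strands ordered $X_1,X_2,X_3,Y_1,Y_2,Y_3$. One factor is $J_{X_1,X_2,Y_1,Y_2}$, with $\Delta$ applied to the strand that gets fused next. The other is $J$ acting on the fused pair $(X_{12},X_3,Y_{12},Y_3)$, which is obtained by cabling. That is, it equals $\Delta_1\Delta_3 J$ up to relabeling of the strands. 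By the cabling formula \eqref{eq:unnormalized-cabling}, and because the fusion tangle $J$ has no caps or cups, this $J$ on the fused pair is again the Kontsevich integral of a cabled tangle. Hence the monoidal structure of $T_{(3)}$ is $Z(\mathcal J)$ for an explicit $6$-strand q-tangle $\mathcal J$. This tangle moves the $X_3$ strands and the $X_2,X_3$ strands past the $Y$ strands with the parenthesization $((X_1X_2)X_3)((Y_1Y_2)Y_3)$ at the bottom. Similarly, $T'_{(3)}$ gives a tangle $\mathcal J'$ with parenthesization $(X_1(X_2X_3))(Y_1(Y_2Y_3))$.

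\textbf{Step 2: the monoidality condition as an isotopy.} Monoidality of the transformation means
\[ \mu'_{X,Y}\circ(\Phi_X\otimes\Phi_Y) = \Phi_{X\otimes Y}\circ \mu_{X,Y}. \]
Here $\Phi_{X\otimes Y}$ acts by $\Delta^1_1\Delta^1_2\Delta^1_3\Phi$ on the strand pairs $(X_i,Y_i)$, and $\Phi_X\otimes\Phi_Y$ is $\Phi\otimes\Phi$. Every term is the Kontsevich integral of a q-tangle. The two sides are the integrals of the two composite tangles obtained by stacking $\mathcal J$ and $\mathcal J'$ with the straight reparenthesizing tangles. These composites have the same endpoints and the same parenthesizations, and they are isotopic: both are the same braid-like picture in which the $X$-triple crosses over the $Y$-triple, differing only in where the reparenthesization happens. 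Neither composite has critical points, so $Z$ takes equal values on them, and the identity follows.

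\textbf{Main obstacle.} The hard part is Step 1: making sure that the iterated fusion, including the parenthesization of the intermediate $\g$-modules, really is the integral of a single q-tangle, with the right associator placed at each stage. In particular, the associator of $\Umodnp{\g}$ used in the monoidal-functor axioms must match the $\Phi$'s inserted between the $J$'s. I would check this first for the left parenthesization by drawing the tangle explicitly. The right parenthesization then follows from the same drawing rotated about a vertical axis, via Proposition~\ref{prop:reflections}(1) and the symmetry $J^{4321}=J$. As a cross-check, I would use the alternative purely algebraic route: apply the pentagon \eqref{eq:assoc:pentagon} and the hexagon to $J$ directly, in the style of \cite[Thm~2.1]{LBS:coherent}.
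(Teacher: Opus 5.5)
Your proposal is correct and follows essentially the same route as the paper's proof. Like the paper, you write the monoidal structure of $T_{(3)}$ as $J_{X_1,X_2,Y_1,Y_2}\circ J_{X_1X_2,X_3,Y_1Y_2,Y_3}$, and that of $T'_{(3)}$ analogously, express both sides of the monoidality equation as Kontsevich integrals of parenthesized braids, and conclude by their isotopy. Two small corrections: it is the factor $J_{X_1X_2,X_3,Y_1Y_2,Y_3}$, not $J_{X_1,X_2,Y_1,Y_2}$, that carries the doubling, and you should also state the unit axiom, which holds trivially because $\Phi$ acts as the identity on the monoidal unit.
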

Applying the functors $T_{(3)}$ and $T'_{(3)}$ to $C^\infty(G^3, \mathbb{C})\in \Umod{\g^{\oplus 3}\oplus \bar{\g}^{\oplus 3}}$, we get two possible deformation quantizations of the moduli space of flat connections on a disk with 4 marked points
\begin{equation} \label{diag:two-ways-fourvertex}  \raisebox{-0.4\height}{\begin{overpic}{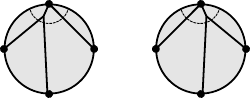}
\put(16.4,41){$+$}
\put(-7,18){$-$}
\put(40,18){$-$}
\put(16.4,-6){$-$}
\put(76.8,41){$+$}
\put(53.4,18){$-$}
\put(100.4,18){$-$}
\put(78.4,-6){$-$}
\end{overpic}}\quad . \end{equation}
The proposition above tells us that these quantizations are isomorphic, with the isomorphism given by the action of the Drinfeld associator.
\begin{proof}
	The monoidal structure $\mu_{X, Y}\colon T_{(3)}(X)\otimes T_{(3)}(Y) \to T_{(3)}(X\otimes Y)$ on functor $T_{(3)}$ can be written as	\[ J_{X_1, X2, Y1, Y2}\circ J_{X1X2, X3, Y1Y2, Y3}. \]
	Multiple actions in one index mean we act diagonally (or equivalently, we use the doubling operation $\Delta_i$ from Definition \eqref{def:chord-diagrams-operations} on the chord diagram $J$).

To check that the natural transformation with components  $\Phi_{X1, X2, X3}$ is monoidal, we will use that all of the morphisms come from chord diagrams which can be obtained via Kontsevich integral from parenthesized tangles. Indeed, the equality $ \Phi_{X1Y1, X2Y2, X3Y3} \circ \mu_{T_{(3)}(X), T_{(3)}(Y)}  = \mu'_{T_{(3)}'(X), T_{(3)}'(Y)} \circ \Phi_{X1, X2, X3} \otimes \Phi_{Y1, Y2, Y3}$ can be written as

\begin{equation} \label{eq:JJ1fusion}\raisebox{-0.4\height}{\begin{overpic}{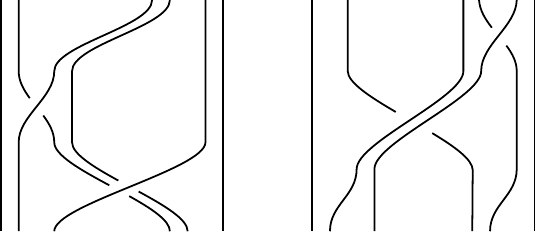} 
\put(50, 21){\makebox[0pt]{$=$}} 
\put(0, -3){\makebox[0pt]{\scaleobj{0.8}{X_1}}}
\put(4, -3){\makebox[0pt]{\scaleobj{0.8}{X_2}}}
\put(10.5, -3){\makebox[0pt]{\scaleobj{0.8}{X_3}}}

\put(32, -3){\makebox[0pt]{\scaleobj{0.8}{Y_1}}}
\put(36, -3){\makebox[0pt]{\scaleobj{0.8}{Y_2}}}
\put(42.5, -3){\makebox[0pt]{\scaleobj{0.8}{Y_3}}}

\put(58, -3){\makebox[0pt]{\scaleobj{0.8}{X_1}}}
\put(62, -3){\makebox[0pt]{\scaleobj{0.8}{X_2}}}
\put(70, -3){\makebox[0pt]{\scaleobj{0.8}{X_3}}}

\put(88.5, -3){\makebox[0pt]{\scaleobj{0.8}{Y_1}}}
\put(92, -3){\makebox[0pt]{\scaleobj{0.8}{Y_2}}}
\put(100, -3){\makebox[0pt]{\scaleobj{0.8}{Y_3}}}

\put(50, 101){.}
\end{overpic}} \end{equation}
\vspace*{3mm}

These tangles are easily seen to be isotopic. The compatibility with units is obvious, since the associator acts trivially on the monoidal unit of $\Umod{\g^{\oplus 3}}$.
\end{proof}
If we additionally perform reduction at the marked point $+$, the two functors $T_{(3)}$, $T'_{(3)}$ above will coincide. 
\begin{proposition}\label{prop:two-fusions-and-reduction}
Post-composing the functors $T_{(3)}, T'_{(3)} \colon \Umod{\g^{\oplus 3}} \to \Umodnp{\g}$ with the reduction $\g\text{-inv} \colon \Umodnp{\g} \to \mathrm{Vect}$ results in identical monoidal functors \[\g\text{-inv} \circ T_{(3)} = \g\text{-inv}\circ T'_{(3)}.\]
\end{proposition}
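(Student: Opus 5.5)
We would prove this in the same way as Proposition \ref{prop:reduction-and-opposite-fusion}. First we compute the monoidal structures of the two composites and write each as a chord diagram acting on $X^\g\otimes Y^\g$. By the footnote on composing monoidal functors, the structure map of $\g\text{-inv}\circ T_{(3)}$ is the inclusion $X^\g\otimes Y^\g\hookrightarrow (X\otimes Y)^\g$ followed by $J_{X_1, X_2, Y_1, Y_2}\circ J_{X_1X_2, X_3, Y_1Y_2, Y_3}$. The structure map of $\g\text{-inv}\circ T'_{(3)}$ is the analogous composite for the right parenthesization. Both underlying functors send $X$ to $X^{\g}$, where $\g$ acts diagonally on all three factors. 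So it suffices to show that the two elements of $\mathcal P_6$ act identically on $(X\otimes Y)^\g$ restricted to $X^\g\otimes Y^\g$.

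The cleanest route is to use Proposition \ref{prop:two-fusions} directly. The associator $\Phi_{X_1,X_2,X_3}$ is a monoidal natural isomorphism $T_{(3)}\Rightarrow T'_{(3)}$, so we have
\[ \mu'\circ(\Phi_X\otimes\Phi_Y) = \Phi_{X_1Y_1,X_2Y_2,X_3Y_3}\circ\mu . \]
Restricted to invariants, the claim reduces to showing that $\Phi_{X_1,X_2,X_3}$ acts as the identity on $X^\g$ for the diagonal action, and similarly for $Y$ and $X\otimes Y$. Applying $\g\text{-inv}$ then turns the monoidal isomorphism into the identity transformation, and equality of the structure maps follows.

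The key step is therefore that $\Phi\in\mathcal P_3$ acts trivially on $(X_1\otimes X_2\otimes X_3)^\g$. In chord-diagram terms, we apply the operation $r_3$ of Proposition \ref{prop:operation-r-on-chord-diagrams}: on invariants, the action of $D$ equals the action of $r_3(D)\in\mathcal P_2$ on the first two factors. Since $\Phi=\Phi(t_{12},t_{23})$ is horizontal, it suffices to compute $r_3(t_{12})$ and $r_3(t_{23})$. We have $r_3(t_{12})=t_{12}$. Reversing strand 3 and cabling it onto strands 1 and 2 gives $r_3(t_{23}) = -t_{12}-t_{22}$.

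By Proposition \ref{prop:centerPn} and Proposition \ref{prop:insert}, $t_{22}$ is central, and $t_{12}$ is central by the second part of Proposition \ref{prop:centerPn}. Hence $r_3(\Phi)=\Phi(t_{12},-t_{12}-t_{22})$ is a series in commuting variables. Because $r_3$ is an algebra morphism, we may then evaluate in a commutative setting. Since $\Phi$ is grouplike, $\log\Phi$ is a Lie series with no linear terms. It is therefore built from commutators of $X,Y$, and these vanish when $X$ and $Y$ commute. So $r_3(\Phi)=1$.

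An alternative is to argue with tangles. Close strand 3 via caps as in Proposition \ref{prop:action-on-invariants}. The resulting tangle is isotopic to a trivial one, up to normalization factors $\nu$ that are central and cancel between the two sides.

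The main obstacle is making the "acts via $r_3$" reduction rigorous. We need the weight identity: on diagonal invariants, the action of $w_\g(D)$ equals the action of $w_\g(r_3 D)$ on the first two factors, with the sign from reversal matching $e\mapsto -e$. We would check this on single chords using $e^{(3)}=-(e^{(1)}+e^{(2)})$ on invariants, and then extend multiplicatively using Proposition \ref{prop:operation-r-on-chord-diagrams}. Compatibility with units is immediate.
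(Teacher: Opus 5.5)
Your proposal is correct and is essentially the paper's proof: you whisker the monoidal isomorphism $\Phi\colon T_{(3)}\Rightarrow T'_{(3)}$ from Proposition~\ref{prop:two-fusions} with $\g\text{-inv}$, then show $r_3(\Phi)=1$ using that $r_3$ is an algebra morphism with $r_3(t_{23})=-t_{12}-t_{22}$ and $t_{22}$ central. One small correction: grouplikeness only makes $\log\Phi$ a Lie series, while the vanishing of its linear part (which you do need) comes from the pentagon equation, i.e.\ from the standard fact that $\Phi = 1 + (\text{terms with at least two chords})$.
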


\begin{proof}
	Applying the reduction at the associator natural transformation (i.e.\ whiskering) tells us that $\g\text{-inv} \circ T_{(3)}$, $\g\text{-inv}\circ T'_{(3)}$ are isomorphic via a monoidal natural transformation given by $\Phi$ acting on the $G$-invariants in $X$, with respect to the diagonal action. In other words, for $x\in X^\g$ and $e\in \g$, we have
	\[ (e)_{X_3} (x) = - e_{X_1}(x) - e_{X_2}(x). \] We can thus write the action of $\Phi\in \mathcal P_3$ only via the first two actions, by doubling the last, reversing orientation, adding minus sign for each chord endpoint: this is exactly the operation $r_3$ from Proposition \ref{prop:operation-r-on-chord-diagrams}. However, since $r_3$ is an algebra morphism and it sends $t_{23}$ to $-t_{12}-t_{22}$, applying it on $\Phi = \exp(\text{Lie series}(t_{12}, t_{23}))$ gives the identity (recall that $t_{ii}$ are central). This fact, that associators act trivially on invariants, was proven by Ohtsuki in \cite[Eq.~10.1]{Ohtsuki2001} and, for the KZ associator, by Le--Murakami in \cite[Lemma~2]{LMMO}.
\end{proof}

In total, we have six possibilities for a star product on $G^3$ seen as the moduli space of flat connections on a disk with three points marked $-$. By the results above, these are all equal, and will be denoted by the disk in the middle: 
\[
\begin{overpic}{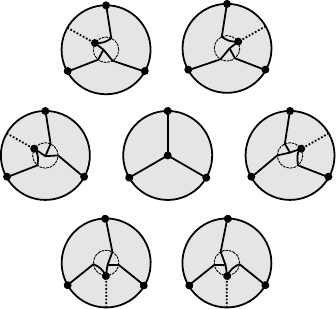}
\put(29, 91){$-$}
\put(19, 72){$-$}
\put(39, 72){$-$}
\put(25, 82){$+$}

\put(65, 91){$-$}
\put(55, 72){$-$}
\put(74, 72){$-$}
\put(68, 82){$+$}

\put(11, 59.5){$-$}
\put(0, 41){$-$}
\put(20, 41){$-$}
\put(7, 50){$+$}

\put(84, 59.5){$-$}
\put(73.5, 41.5){$-$}
\put(94, 41.5){$-$}
\put(88, 50){$+$}

\put(28.5, 27.5){$-$}
\put(19, 10){$-$}
\put(39, 10){$-$}
\put(26, 6){$+$}

\put(65, 27.5){$-$}
\put(55, 10){$-$}
\put(74, 10){$-$}
\put(68, 6){$+$}

\put(47, 59.5){$-$}
\put(30, 41){$-$}
\put(55, 41){$-$}
\put(47, 41){$+$}

\end{overpic}
\]
For example, the top (and then every second) pair of disks give the same quantization, by Proposition \ref{prop:reduction-and-opposite-fusion}.  The bottom (and every second) pair of disks differ by parenthesization, their equality follows from Proposition \ref{prop:two-fusions-and-reduction}. We can denote all these quantizations by the disk in the middle.

\paragraph{Quantizations of the same surface which are not equal.}
Let us mention that the following two moduli spaces 
\begin{equation}\label{pic:plus-minus-two-ways} \begin{overpic}{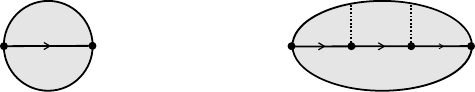}
\put(-4, 8.5){$-$}
\put(21, 8.5){$+$}
\put(55.5, 8.5){$-$}
\put(72.5, 5.5){$+$}
\put(85, 5.5){$-$}
\put(101, 8.5){$+$}
\end{overpic}  \end{equation}
don't have the same quantizations. Indeed the quantization assigned to the left disk is the undeformed product on $C^\infty(G,\mathbb{C})$, while the quantization on the right is given by the product of $K$ and $K_{-t}$ from the proof of Proposition \ref{prop:reduction-and-opposite-fusion}, i.e.\
\[ a e^{t/2} b e^{-t/2} = ab \stackrel{\eqref{eq:product-of-ab}}{=} \Delta \nu \cdot \nu^{-1} \otimes \nu^{-1}.\]
In other words, the action of $\nu$ (by left- or right-invariant vector fields) is an isomorphism between these star products.
\smallskip

A similar example for the disk with three marked points is the following
\[ \begin{overpic}{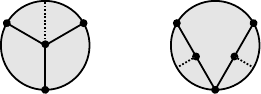}
\put(14, -4){$-$}
\put(-4, 28){$-$}
\put(32.5, 28){$-$}
\put(20, 15){$+$}

\put(79.5, -4){$-$}
\put(61.5, 28){$-$}
\put(98.5, 28){$-$}
\put(75, 15){$+$}
\put(84, 15){$+$}
\end{overpic}\]
We studied the quantization of the left disk above, it's given by the action of the tangle \eqref{eq:JJ1fusion} on diagonal invariants. The quantization of the right disk is instead given by fusing together the disk with two points marked $-$, which has a star product given by $K= be^{-t/2}$
\[ \raisebox{-0.3\height}{\begin{overpic}{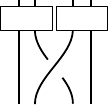} 
\put(1, 73){\footnotesize $be^{-t/2}$}
\put(53, 73){\footnotesize $b e^{-t/2}$}
\put(90, -2){\footnotesize $-t$}
\end{overpic}}\quad . \]
The $-t$ at the bottom is there because we fuse two points marked $-$. See Corollary \ref{cor:LBS-isomorphisms} for an explicit isomorphism in this case.

\subsection{Twisted fusion} \label{sec:twisted-fusion}
We can informally characterize the above counterexamples by saying that changes of spines involving vertices of opposite signs introduce non-trivial isomorphisms between quantizations. We will now twist the fusion functor to make these isomorphisms trivial. Then, we will reprove Propositions \ref{prop:reduction-and-opposite-fusion}, \ref{prop:two-fusions}  and \ref{prop:two-fusions-and-reduction} for the twisted fusion.

\begin{definition}
The twisted fusion is equal to the (untwisted) fusion from Definition \ref{def:fusionreduction} as a functor
\[ T^\text{tw} \colon \Umod{\g\oplus \g\oplus \h}\to \Umod{\g\oplus \h} \]
but with the monoidal structure given by the action
\[  J^\text{tw} \coloneq (\Delta_1 \Delta_2 a^{\tfrac 12})^{1324} \cdot J \cdot  a^{-\tfrac 12}\otimes  a^{-\tfrac 12} \in \mathcal P_4,\]
which we represent by the mixed tangle--chord diagram 
\[
\raisebox{-0.45\height}{\begin{overpic}{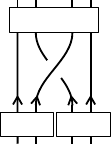}
\put(3, 8){\footnotesize{$a^{-1/2}$}}
\put(42, 8){\footnotesize{$a^{-1/2}$}}
\put(26, 81){\footnotesize{$a^{1/2}$}}
\end{overpic}}.
\]  
\end{definition}
\begin{proposition} \label{prop:twisted-untwisted-isomorphic}
The twisted fusion is a monoidal functor monoidally isomorphic to the untwisted fusion. The twisted fusion of algebras results in quasi-Poisson fusion on the level of the semiclassical quasi-Poisson bracket.
\end{proposition}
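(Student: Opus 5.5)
The plan is to recognize $J^\text{tw}$ as the monoidal structure $J$ conjugated by the natural automorphism of the fusion functor given by the action of $a^{1/2}$. Concretely, on a $U(\g\oplus\g\oplus\h)$-module $X$ with the two $\g$-actions labelled $X_1, X_2$, define $\eta_X \colon T(X)\to T(X)$ as the action of $w_\g(a^{-1/2})$ through $X_1, X_2$. The real power $a^{-1/2}$ makes sense by \eqref{eq:power}, since $a = 1 + (\text{higher})$ by Proposition~\ref{prop:properties-of-a-and-b}. The weight of any chord diagram in $\mathcal P_2$ is invariant under the diagonal $\g$-action, and $T$ restricts to the diagonal $\g$. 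Hence $\eta_X$ is a $\g\oplus\h$-module map and is natural in $X$. It is invertible because it starts with the identity.

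Next I would check the monoidality condition for $\eta\colon (T,J)\Rightarrow(T,J^\text{tw})$:
\[ \eta_{X\otimes Y}\circ J_{X,Y} = J^\text{tw}_{X,Y}\circ (\eta_X\otimes\eta_Y).\]
On $X\otimes Y$, the object $T(X\otimes Y)$ carries the action of $\g$ through the diagonal of $X_1Y_1$ and $X_2Y_2$. So $\eta_{X\otimes Y}$ is the action of $\Delta_1\Delta_2 a^{-1/2}$, permuted by $1324$ to match the strand order $X_1,X_2,Y_1,Y_2$. Similarly, $\eta_X\otimes\eta_Y$ is $a^{-1/2}\otimes a^{-1/2}$. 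Substituting the definition of $J^\text{tw}$, the identity becomes a tautology in $\mathcal P_4$ up to the order of composition. I must make sure the convention (placing $D$ after $D'$) matches the displayed formula; if it does not, I will use $\eta^{-1}$ instead, which gives the same conclusion.

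Since a natural isomorphism transports monoidal structures, $(T,J^\text{tw})$ is automatically a monoidal functor. Pentagon compatibility with the associators of $\Umodnp{\g\oplus\g\oplus\h}$ and $\Umodnp{\g\oplus\h}$ is inherited from $J$. Unit compatibility follows from $\varepsilon_1(a)=\varepsilon_2(a)=1$, which gives $\varepsilon_i(a^{\pm 1/2}) = 1$, so $\eta$ is trivial on the unit.

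For the semiclassical statement, I would expand to first order in $\hbar$. The key input is the \textbf{expansion} property: $a$ has no degree-one chord term, so $a^{\pm1/2}$ and $\Delta_1\Delta_2 a^{1/2}$ are $1+O(\hbar^2)$. Hence $J^\text{tw}=J+O(\hbar^2)$, and the first-order term $m_1$ of the twisted-fused product agrees with the untwisted one. Theorem~\ref{thm:LBS-quantization} identifies the antisymmetrization of that term with the quasi-Poisson fusion term \eqref{eq:fusion-on-functions}. The main (mild) obstacle is the bookkeeping of strand orderings and composition order in the naturality square; the rest is formal.
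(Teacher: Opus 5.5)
Your proposal is correct and is essentially the paper's argument: the action of a real power of $a$ is a natural automorphism of the underlying fusion functor that transports $J$ to $J^\text{tw}$, and the expansion property $a = 1 + (\text{chord degree} \ge 2)$ leaves the first-order term of the product unchanged. With the paper's convention, where the bottom factor $a^{-1/2}\otimes a^{-1/2}$ acts first, the monoidal isomorphism $(T,J)\Rightarrow(T,J^\text{tw})$ is the action of $a^{1/2}$ (as the paper states); your $\eta$ given by $a^{-1/2}$ is the monoidal isomorphism in the opposite direction $(T,J^\text{tw})\Rightarrow(T,J)$, as you anticipated.
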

\begin{proof}
The action of $a^{\tfrac 12}$ is a natural automorphism of the functor $T$, and hence it can be used to twist a monoidal structure on $T$.
\smallskip

The second statement follows from the fact that $a$ is equal to $1$ plus terms with two and more chords (Proposition \ref{prop:properties-of-a-and-b}).
\end{proof}

Let us now prove a twisted version of Propositions \ref{prop:reduction-and-opposite-fusion}, \ref{prop:two-fusions} and \ref{prop:two-fusions-and-reduction}.
\begin{proposition}\label{prop:twisted-properties}
The two composed functors 
\[\begin{tikzcd}
	{\Umod{\g^{\oplus2}}} & {\Umodnp{\g}} & {\mathrm{Vect}}
	\arrow["T^\text{tw}", curve={height=-18pt}, from=1-1, to=1-2]
	\arrow["{T^\text{tw,op}}"', curve={height=18pt}, from=1-1, to=1-2]
	\arrow["{\g\text{-inv}}", from=1-2, to=1-3]
\end{tikzcd}\]
agree as monoidal functors.
\smallskip

The two possible bracketings of triple fusion $\Umod{\g^{\oplus 3}} \to \Umodnp{\g}$ are isomorphic via the twisted associator $\Phi(t_{12}, t_{23})^{\sqrt{a}}\in \mathcal P_3$. After composing with the reduction, these two functors are equal as monoidal functors.
\end{proposition}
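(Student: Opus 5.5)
The plan is to reduce each claim to its untwisted counterpart (Propositions \ref{prop:reduction-and-opposite-fusion}, \ref{prop:two-fusions} and \ref{prop:two-fusions-and-reduction}). The tool is a general principle: twisting a monoidal structure by a natural automorphism $\eta$ of the underlying functor replaces a monoidal natural transformation $\theta$ by $\eta'\circ\theta\circ\eta^{-1}$, and an equality of monoidal functors by the conjugated equality. We then only need to track what the natural automorphism $a^{1/2}$ becomes after reduction or after composing two fusions.

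\textbf{Triple fusion.} Write $T^{\text{tw}}_{(3)}$ for the composite of twisted fusions in the left bracketing. Its monoidal structure is the untwisted one, $J_{X_1,X_2,Y_1,Y_2}\circ J_{X_1X_2,X_3,Y_1Y_2,Y_3}$, conjugated by the natural automorphism
\[
\eta_X=\Delta_1(a^{1/2})\cdot (a^{1/2}\otimes 1)\in\mathcal P_3,
\]
acting on $X_1,X_2,X_3$. Here the inner twist acts on $X_1X_2$ after doubling, and the outer twist is transported through $T$ by naturality.

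Similarly, the right bracketing is twisted by $\eta'_X=\Delta_2(a^{1/2})\cdot(1\otimes a^{1/2})$. By Proposition \ref{prop:two-fusions}, $\Phi$ is a monoidal isomorphism between the untwisted functors. Hence $\eta'\,\Phi\,\eta^{-1}$ is monoidal between the twisted ones, and this element is precisely the twist $\Phi^{\sqrt a}$ with $F=a^{1/2}$. This is legitimate because $a^{21}=a$ and $\varepsilon_i(a)=1$ (Proposition \ref{prop:properties-of-a-and-b}). The main bookkeeping obstacle is verifying the order of factors, i.e.\ that the twist acts through $\Delta_2 F\cdot 1\otimes F$ on the correct side. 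I would check this by drawing the mixed tangle--chord diagrams and using Proposition \ref{prop:insert} to slide the cabled $a^{1/2}$ boxes.

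\textbf{Reduction after triple fusion.} Whiskering with $\g$-inv shows that the two reduced twisted functors are isomorphic via $\Phi^{\sqrt a}$ acting on diagonal invariants, that is, via $r_3(\Phi^{\sqrt a})$. Since $r_3$ is an algebra morphism (Proposition \ref{prop:operation-r-on-chord-diagrams}) and $r_3(\Phi)=1$ by the proof of Proposition \ref{prop:two-fusions-and-reduction}, it remains to show that
\[
r_3\bigl(\Delta_2 a^{1/2}\cdot 1\otimes a^{1/2}\bigr)=r_3\bigl(\Delta_1 a^{1/2}\cdot a^{1/2}\otimes 1\bigr).
\]
Both sides are $r_3$-images of elements built from $a^{1/2}$ only, so I expect to compute them through the tangle pictures. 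Capping the third strand against a cabled strand leaves, via \eqref{eq:ri-of-ab} and the insertion independence, $a^{1/2}$ on strands $1,2$ times central factors of the form $\nu^{\pm1/2}$. Equality should then follow from the symmetries $a^{21}=a$ and $S_1S_2(a)=a$. If this direct comparison is messy, an alternative is to avoid computing the factor at all: argue that after reduction the natural automorphism is given by an element of $\mathcal P_2$ that both bracketings produce symmetrically, so the conjugations cancel.

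\textbf{Two-point case.} Here we need to compare $T^{\text{tw}}$ and $T^{\text{tw,op}}$ followed by invariants. Their monoidal structures on $X^\g\otimes Y^\g$ are the untwisted ones, already shown equal in Proposition \ref{prop:reduction-and-opposite-fusion}. For the twisted ones we compose with $a^{-1/2}$ acting on $X_1,X_2$ (respectively $X_2,X_1$) and with $\Delta_1\Delta_2 a^{1/2}$ after fusion. On invariants, $a^{-1/2}_{X_1X_2}$ acts through $r_2(a^{-1/2})$, and $(a^{21})=a$ makes the two orders agree. The outer factor acts on $(X\otimes Y)^{\g}$ through the reversed strands, and its two versions agree by $S_1S_2(a)=a$. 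Conjugating the equality of Proposition \ref{prop:reduction-and-opposite-fusion} by these identical factors then gives equality of the twisted monoidal structures. Units are unaffected in all cases, since $\varepsilon_i(a)=1$.
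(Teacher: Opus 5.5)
Your arguments for the first two claims are correct, and they are more conceptual than the paper's. Conjugating the monoidal isomorphism $\Phi$ of Proposition~\ref{prop:two-fusions} by the twisting automorphisms gives $\eta'\Phi\eta^{-1}=\Phi^{\sqrt a}$ directly. There is no ordering issue, since $\Delta_1a^{1/2}$ and $a^{1/2}\otimes 1$ commute by Proposition~\ref{prop:insert}; the paper leaves this step as a diagrammatic exercise. In the two-point case, $a^{21}=a$ alone shows that $T^{\text{tw}}$ and $T^{\text{tw,op}}$ are twists of $T$ and $T^{\text{op}}$ by the same natural automorphism, so the claim follows from Proposition~\ref{prop:reduction-and-opposite-fusion} without $S_1S_2(a)=a$. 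The paper instead computes $K^{\text{tw}}$ explicitly in \eqref{eq:K-twisted}, which it reuses later.

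The gap is in the last claim. You correctly reduce it to $r_3(\eta')=r_3(\eta)$, but that identity is the entire content of the claim, and your suggested justification does not establish it.
\begin{itemize}
\item The factors $r_3(\Delta_1a^{1/2})=\Delta\nu^{-1/2}$ and $r_3(\Delta_2a^{1/2})=\nu^{-1/2}\otimes 1$ are indeed handled by \eqref{eq:ri-of-ab}.
\item The factor $r_3(1\otimes a^{1/2})$ is not. Strand $3$ is a bare leg of $a^{1/2}$, capped against the cabling of strands $1,2$, and strand $2$ carries the other leg. So its chord endpoints spread over both remaining strands.
\item What you need is $r_3(1\otimes a^{1/2})=(\nu^{1/2}\otimes 1)\cdot a^{1/2}\cdot\Delta\nu^{-1/2}$. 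This does not follow from $a^{21}=a$, $S_1S_2(a)=a$, $\varepsilon_i(a)=1$ and $r_i(a)=\nu^{-1}$. It holds when $a=\Delta\mu\cdot\mu^{-1}\otimes\mu^{-1}$, but by Proposition~\ref{prop:twist-by-phi} that happens only for even associators.
\item For a concrete counterexample, take $a=1+\epsilon(t_{11}^2t_{12}+t_{22}^2t_{12}-2t_{11}t_{22}t_{12})$ with $\nu=1$. It has all the listed properties, yet violates $r_3(\eta')=r_3(\eta)$ at first order in $\epsilon$. You can check this with the weight system of a one-dimensional abelian Lie algebra, where $1\otimes D\mapsto r_3(1\otimes D)$ becomes the substitution $(x,y)\mapsto(y,-x-y)$.
\item Your fallback symmetry argument fails for the same reason. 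The relabeling $D\mapsto D^{321}$ does send $\eta$ to $\eta'$, but it turns $r_3$ into $r_1$, so nothing comes for free.
\end{itemize}

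The missing input is Proposition~\ref{prop:twist-by-phi}: $\Phi^a=\Phi_{-t}$ is horizontal, so $r_3(\Phi^a)=1$ by the argument of Proposition~\ref{prop:two-fusions-and-reduction}. Since the factors of $\eta$ and of $\eta'$ commute, $\Phi^a=\eta'^2\,\Phi\,\eta^{-2}$. Multiplicativity of $r_3$ and $r_3(\Phi)=1$ then give $r_3(\eta')^2=r_3(\eta)^2$. Both sides are $1$ plus higher-order terms, so taking power-series square roots yields $r_3(\eta')=r_3(\eta)$, that is, $r_3(\Phi^{\sqrt a})=1$. This is exactly the paper's step $x^2y^2=1\Rightarrow xy=1$.
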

\begin{proof}
The first statement is proven as follows. Together with the twisting by $\sqrt{a}$, the monoidal structure on the functor $\g\text{-inv}\circ T^\text{tw}$ is given by
\[  
\begin{overpic}{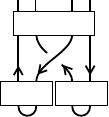}
\put(3, 14){\footnotesize{$a^{-1/2}$}}
\put(52, 14){\footnotesize{$a^{-1/2}$}}
\put(35, 74){\footnotesize{$a^{1/2}$}}
\end{overpic}
\]
On the bottom, the effect of reversing the strand and connecting with a cap is that of the operation $r_2$, i.e.\ we can replace $a^{-\tfrac 12}$ with $r_2(a^{-\tfrac 12})\stackrel{\eqref{eq:ri-of-ab}}{=} \nu^{\tfrac 12}$.

The chord diagram coming from the part below the top box is thus \[ \nu^{\tfrac 12} \otimes \nu^{\tfrac 12} \cdot b \cdot e^{-t/2}.\] We can slide it to the top, through one of the doubled legs of $a^{\tfrac 12}$ by Proposition \ref{prop:insert}. 
We can thus rewrite the tangle above as follows, the bottom greyed-out part carries no further chords
\[  
\raisebox{-0.4\height}{\begin{overpic}{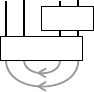}
\put(60, 70){\footnotesize{$X$}}
\put(31, 38){\footnotesize{$a^{1/2}$}}
\end{overpic}} \quad \text{where} \quad X = \nu^{\tfrac 12} \otimes \nu^{\tfrac 12} \cdot b \cdot e^{-t/2}
\]
What remains is the doubling of $r_2(a^{\tfrac 12})$, i.e.\ $\Delta \nu^{-\tfrac 12}$. Gathering all the terms, the star product on a disk with two marked $-$ points is given by the action of 
\begin{equation}\label{eq:K-twisted} K^{\text{tw}} = b \cdot e^{-t/2} \cdot \nu^{\tfrac 12} \otimes \nu^{\tfrac 12} \cdot \Delta \nu^{-\tfrac 12}. \end{equation}
An analogous computation in the case of the opposite fusion gives the same result, using Proposition \ref{prop:S-nu}.
\medskip

Let us now prove the second statement. Using the twisted fusion for left parenthesization results in the following chord diagram. We leave the doubling operations $\Delta_i$ implicit.
\begin{equation} \label{diag:double-fusion-twisted}
\raisebox{-0.45\height}{\begin{overpic}{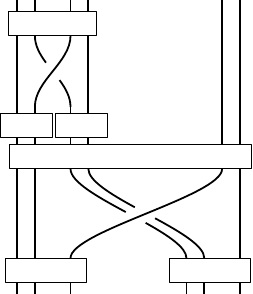}
\put(7, 6){\footnotesize{$a^{-1/2}$}}
\put(65, 6){\footnotesize{$a^{-1/2}$}}
\put(40, 44){\footnotesize{$a^{1/2}$}}

\put(1, 55){\footnotesize{$a^{-1/2}$}}
\put(20, 55){\footnotesize{$a^{-1/2}$}}
\put(14, 90){\footnotesize{$a^{1/2}$}}
\end{overpic}}
\end{equation}
We postcompose with (doubling) of the twisted associator
\[  
\raisebox{-0.45\height}{\begin{overpic}{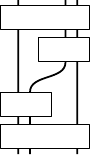}
\put(20, 7){\footnotesize{$a^{-1/2}$}}
\put(3, 28){\footnotesize{$a^{-1/2}$}}
\put(32, 63){\footnotesize{$a^{1/2}$}}
\put(20, 84){\footnotesize{$a^{1/2}$}}
\end{overpic}}.
 \]
The other side of the equation is the monoidal structure on the other parenthesization precomposed with two copies of the twisted associator.
\[\begin{overpic}{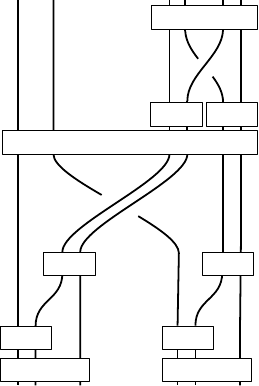}
\put(8, 2){\footnotesize{$a^{-1/2}$}}
\put(48, 2){\footnotesize{$a^{-1/2}$}}
\put(1, 10){\footnotesize{$a^{-1/2}$}}
\put(43, 10){\footnotesize{$a^{-1/2}$}}
\put(14, 29.5){\footnotesize{$a^{1/2}$}}
\put(55, 29.5){\footnotesize{$a^{1/2}$}}

\put(40, 68){\footnotesize{$a^{-1/2}$}}
\put(54, 68){\footnotesize{$a^{-1/2}$}}
\put(32, 61){\footnotesize{$a^{1/2}$}}
\put(48, 93.5){\footnotesize{$a^{1/2}$}}

\end{overpic}
\]
It is a pleasant exercise to prove the desired equality, sliding the rectangles around using Proposition \ref{prop:insert}.

\medskip

To prove the third claim, that the twisted associator $\Phi^{\sqrt{a}}$ acts trivially on invariants, we can't use the argument from Proposition \ref{prop:two-fusions-and-reduction}, since this twisted associator is not necessarily horizontal (and thus doesn't have to come from a Lie series). Instead, we use the algebra morphism $r_3 \colon \mathcal P_3 \to \mathcal P_2$ on the associator twisted with $a$
\[ r_3(\Phi^a) = \underbrace{r_3(\Delta_2 a \cdot (1 \otimes a))}_{x^2\coloneq} \cdot \underbrace{r_3(\Phi)}_{1} \underbrace{r_3(\Delta_1 a^{-1} \cdot (a^{-1}\otimes 1))}_{y^2\coloneq} . \]
However, by Proposition \ref{prop:twist-by-phi}, $\Phi^{a}= \Phi_{-t}$, which is again horizontal, and thus Proposition \ref{prop:two-fusions-and-reduction} applies to it. We get
\[x^2 y^2 = 1 \in \mathcal P_2\]
This implies that $x^2$ and $y^2$ commute, and therefore so do their square roots (computed as power series), and 
\[ \sqrt{x^2 y^2} = xy = 1. \]
Coming back to $\Phi^{\sqrt a}$, we get\footnote{There is also an alternative proof in \cite[Prop.~4.1.6]{Jan:thesis}, from which it follows that $x^2 =a\cdot \nu^{-1}$.}
\[r_3(\Phi^{\sqrt a}) = x r_3(\Phi) y = xy = 1.\]
\end{proof}
\begin{remark}\label{rem:twisted-horizontal}
	We remark that $\Phi^{\sqrt{a}}$ is even, by Propositions \ref{prop:twist-by-phi} and \ref{prop:properties-of-a-and-b}. Since $r_3(\Phi^{\sqrt{a}}) = 1$, it is natural to conjecture that it is moreover a horizontal associator.
\end{remark}

\subsection{Deformation quantization and uni-trivalent spines}
Let us summarize what we know. For both untwisted and twisted fusion, we can specify a way to glue a surface from disks, and thus a deformation quantization of the moduli space, using the following kind of spine.  
\begin{definition}\label{def:LBS-spines}
Let $\tilde{\Gamma}$ be a spine of the marked surface $(\Sigma, V = V_+ \sqcup V_-)$ and, additionally, we
\begin{itemize}
\item color all vertices with $+$ and $-$ in such a way that each edge connects $+$ and $-$,  
\item resolve each external vertex of valency $\ge 2$ with a binary planar tree, rooted at boundary, as in \eqref{diag:two-ways-fourvertex},
\item resolve each internal vertex of valency $\ge 3$ with a trivalent planar rootless tree.
\end{itemize}
We call such spine a Li-Bland--Ševera spine. We note that the signs of the internal vertices are fixed by the signs of the external vertices. 
\end{definition}
\begin{proposition}
	A Li-Bland--Ševera spine uniquely determines a deformation quantization of $(\Sigma, V)$ into an algebra in $\Umod{\g^{V_+} \oplus \bar{\g}^{V_-}}$, when using either the untwisted or the twisted fusion.
\end{proposition}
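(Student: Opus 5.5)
The plan is to read off from the Li-Bland--Ševera spine $\tilde\Gamma$ an explicit sequence of fusions and reductions applied to building blocks, and then to show that every such sequence compatible with the spine yields the same algebra.

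\textbf{Step 1: the input data.} Every edge of $\tilde\Gamma$ joins a $+$ vertex to a $-$ vertex. Each edge therefore determines a building block \eqref{eq:disk-pm}: a copy of $C^\infty(G,\mathbb{C})\in\Umod{\g\oplus\bar\g}$ whose two actions sit at the two half-edges with the matching signs. We take the external tensor product of these $N$ copies, which is the commutative algebra $C^\infty(G^N,\mathbb{C})$.

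\textbf{Step 2: external vertices.} At an external vertex $v$ of valency $\ge 2$, the planar binary tree rooted at the boundary prescribes an iterated fusion. Its leaves give the ordered pairs of half-edges, and each internal node of the tree gives one application of $T$ or $T^\text{tw}$. The order of the pair is fixed by the planar structure together with the orientation, as in Theorem~\ref{thm:LBS-quantization}. Since all half-edges at $v$ carry the sign of $v$, each fusion is of one of the two allowed types.

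\textbf{Step 3: internal vertices.} At an internal vertex, the planar rootless trivalent tree has to be turned into a rooted one before it prescribes a fusion. To do this, pick a root leaf, i.e.\ a position of the cilium. This gives a binary rooted tree, hence an iterated fusion, and we follow it by the reduction $\g\text{-inv}$.

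\textbf{Step 4: independence of the remaining choices.} The construction above depends on three further choices:
\begin{itemize}
\item the order in which operations at different vertices are performed;
\item the choice of cilium at each internal vertex;
\item the linear-order data of the rootless tree beyond what it records.
\end{itemize}
Operations at distinct vertices act on disjoint tensor factors of $\g$, so they commute; this is a strict equality of composite monoidal functors. For a trivalent internal vertex, moving the cilium cyclically is exactly the situation of Propositions~\ref{prop:reduction-and-opposite-fusion} and \ref{prop:two-fusions-and-reduction}, or of their twisted versions in Proposition~\ref{prop:twisted-properties}. Those results give equality of the monoidal functors $\g\text{-inv}\circ T_{(3)}$ for all six choices in the case-study diagram. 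For a trivalent internal vertex the rootless tree is a single node, so no further data remain.

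For a higher internal vertex resolved into a trivalent rootless tree, I would interpret each internal edge of the tree as a reduced bivalent or trivalent gluing. Re-rooting then moves across one tree node at a time, and each such move is again covered by the same two propositions, applied locally after whiskering with the other functors. Since monoidal functors send algebras to algebras, equal monoidal functors produce equal star products.

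\textbf{Main obstacle.} The hard step is the higher-valent internal vertices: we must show that changing the root of a rootless tree never passes through a nontrivial associator. The plan is to write the rooted tree as nested fusions and to move the root along an edge. Each move is a composite of one opposite-fusion swap (Proposition~\ref{prop:reduction-and-opposite-fusion}) and reparenthesizations. The reparenthesizations are isomorphisms by $\Phi$ (respectively $\Phi^{\sqrt a}$), and these become identities only after the final reduction. This is where the algebra-morphism property of $r_i$ (Proposition~\ref{prop:operation-r-on-chord-diagrams}) is needed: it lets us push the associator through to the invariants, where $r_3(\Phi)=1$ (respectively $r_3(\Phi^{\sqrt a})=1$) kills it. The same argument works for the twisted and the untwisted fusion.
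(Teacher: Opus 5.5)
Your proposal is correct and is essentially the paper's argument. Edges give building blocks, and the rooted trees at boundary vertices fix the fusions. The data not recorded at internal vertices (the side of the cilium, and the position of the root on the rootless tree) is handled by Proposition~\ref{prop:reduction-and-opposite-fusion} applied to the last fusion and by Proposition~\ref{prop:two-fusions-and-reduction}, or by Proposition~\ref{prop:twisted-properties} for twisted fusion. Your re-rooting one node at a time spells out the higher-valent case that the paper leaves implicit: each elementary move changes only the last one or two fusions before the reduction, so these propositions apply after precomposing with the remaining fusion functors.

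One caution: drop the aside about interpreting internal edges of the resolving tree as reduced gluings. That replaces $\tilde\Gamma$ by a different Li-Bland--Ševera spine with the same uni-trivalent spine (cf.\ the move \eqref{pic:move-two}). For untwisted fusion this genuinely changes the star product, and your argument does not need it.
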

\begin{proof}
	The decomposition of the surface tells us how to build it from the building block: the disk with two points, labelled $+$ and $-$.
	At the boundary, the binary planar tree determines the order and parenthesization of the fusions.\footnote{It follows moreover from a mild generalization of Proposition \ref{prop:two-fusions}, see also \cite[Thm.~2.1]{LBS:coherent}, that the different parenthesizations are related by the action of associators.}
	\medskip
	
	In the interior, the Li-Bland--Ševera spine does not specify from which boundary component the vertex came, but all choices give identical star products, by Proposition \ref{prop:reduction-and-opposite-fusion} applied to the last fusion. Similarly, the Li-Bland--Ševera spine does not specify the position of boundary point on the trivalent graph; the star product is again independent of this choice, this time by Proposition \ref{prop:two-fusions-and-reduction}.  
	
	For twisted fusion, this follows from Proposition \ref{prop:twisted-properties}.
\end{proof}
For the untwisted fusion, we gave two examples of genuinely different star products on the same moduli space at the end of Section \ref{sec:case-study}. The next theorem tells us that twisted fusion solves this issue, and that these two examples were the only obstructions.

\begin{theorem}\label{thm:unique-quantizations}
	Let $(\Sigma, V)$ be a marked surface. 
	Choose a deformation quantization of the moduli space $M_{\Sigma, V}(G)$ by choosing a Li-Bland--Ševera spine $\tilde{\Gamma}$ as in Definition \ref{def:LBS-spines}. Then, if we use the twisted fusion, the star product on $M_{\Sigma, V}(G)$ depends only on the uni-trivalent spine $\Gamma$ obtained from $\tilde{\Gamma}$ by 
	\begin{itemize}
		\item forgetting all interior signs
		\item replacing all vertices with the trees resolving them (i.e.\ forgetting the dashed circle on \eqref{diag:two-ways-fourvertex})
		\item forgetting all bivalent vertices.  
	\end{itemize}
\end{theorem}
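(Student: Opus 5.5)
The plan is to compare two Li-Bland--Ševera spines $\tilde\Gamma, \tilde\Gamma'$ with the same underlying uni-trivalent spine $\Gamma$ and connect them by a sequence of elementary local moves. For each move I will show that the twisted star product does not change. The data forgotten in passing from $\tilde\Gamma$ to $\Gamma$ are of three kinds: the $\pm$ colouring of internal vertices, the choice of where the dashed circles (resolved trees) sit, and the bivalent vertices. So the elementary moves are:
\begin{enumerate}
\item inserting or removing a bivalent internal vertex on an edge, which also flips the colouring of that edge's remaining part;
\item sliding a fusion across an edge, i.e.\ moving a trivalent vertex of the resolving tree from one endpoint $v$ of an edge to the other endpoint;
\item re-choosing the planar resolution or parenthesization of a vertex compatibly with $\Gamma$.
\end{enumerate}
The colouring is determined by the external signs once the bivalent vertices are fixed. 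Hence any two $\tilde\Gamma$ over $\Gamma$ are connected by these moves, and this purely combinatorial statement I would check first.

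Each move is local: the star product is built by applying monoidal functors (twisted fusions and reductions) to a tensor product of building blocks. It therefore suffices to show that the two local gluings give \emph{equal} monoidal functors, or equal algebras, on the relevant small pieces. Functoriality then propagates the equality to the whole surface, since fusions and reductions at distinct vertices commute.

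\textbf{Move 1.} Replacing the building block \eqref{eq:disk-pm} by two blocks glued at a reduced bivalent vertex must reproduce the original algebra. Two cases arise, according to whether the new bivalent vertex is $+$ or $-$. By Proposition~\ref{prop:twisted-properties}, with the computation \eqref{eq:K-twisted}, the twisted star product on the disk with two $-$ points is $K^{\text{tw}}=b e^{-t/2}\,\nu^{1/2}\otimes\nu^{1/2}\,\Delta\nu^{-1/2}$, and the analogous one with two $+$ points is $(K^{\text{tw}})_{-t}=a e^{t/2}\,\nu^{1/2}\otimes\nu^{1/2}\,\Delta\nu^{-1/2}$ by \eqref{eq:ab-minus-t}. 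Gluing a $(-,+)$ disk to a $(+,-)$ disk through a $+$ reduction, and then a second such step, composes $K^{\text{tw}}_{-t}$ and $K^{\text{tw}}$. By \eqref{eq:product-of-ab} and centrality (Proposition~\ref{prop:centerPn}), the product is
\[ ab\,\nu\otimes\nu\,\Delta\nu^{-1}=1 .\]
So the twisted analogue of the first counterexample \eqref{pic:plus-minus-two-ways} disappears. More generally, the bivalent vertex can be absorbed by inserting $K^{\text{tw}}$ on the strands of one edge and checking that, after the twisted fusion at the far endpoint, the factors $a^{\pm1/2}$ coming from $J^{\text{tw}}$ cancel against it via Proposition~\ref{prop:insert}.

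\textbf{Move 2 is the main obstacle.} It is the twisted analogue of the second counterexample: fusing two points at an external vertex versus first fusing the corresponding edges at an interior trivalent vertex. I would write both sides as mixed tangle–chord diagrams in $\mathcal P$. These are built from $J$'s, the twisting boxes $a^{\pm1/2}$, and the $r_i$ operations implementing reduction (Proposition~\ref{prop:operation-r-on-chord-diagrams}). I would then reduce everything to actions on two strands using \eqref{eq:ri-of-ab}, so that the unnormalized caps and cups contribute $a$, $b$, $\nu^{\pm1}$. After this reduction the two sides should be related by an isotopy of the underlying tangle, plus the identities $r_3(\Phi^{\sqrt a})=1$, $\Phi^a=\Phi_{-t}$ (Proposition~\ref{prop:twist-by-phi}), and $S(\nu)=\nu$ (Proposition~\ref{prop:S-nu}). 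The delicate bookkeeping is tracking the $-t$ introduced by fusing $-$ points. I expect this to be handled by Proposition~\ref{prop:reflections}(2), turning the $-t$ into a reflection of the tangle so that the $a$/$b$ factors match up. If a leftover factor survives, it should be of the form $\Delta\nu^{s}\cdot\nu^{-s}\otimes\nu^{-s}$, and I would try to cancel it against the normalization.

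\textbf{Move 3.} This is covered by Proposition~\ref{prop:twisted-properties}. At interior vertices, both the choice of boundary component and the bracketing give equal functors after reduction. At external vertices, the resolving tree is part of $\Gamma$ itself, so there is nothing to check.
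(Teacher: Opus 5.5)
\textbf{Where your proposal agrees with the paper.} Your overall strategy matches the paper's: connect two Li-Bland--Ševera spines over the same $\Gamma$ by local moves and check each move. Your Move~1 computation, $K^{\text{tw}}\cdot K^{\text{tw}}_{-t}=ab\,\nu\otimes\nu\,\Delta\nu^{-1}=1$, is exactly the paper's.

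\textbf{Problems with your Move~1 and connectivity claim.} The local move you can actually check is insertion or removal of a \emph{pair} of bivalent vertices. A single bivalent vertex changes the parity of the number of bivalent vertices on that edge. This forces a sign flip of a whole internal tree, and with it changes on all the other edges of that tree, so it is not a local move. Your sentence about absorbing one bivalent vertex via $K^{\text{tw}}$ and the $a^{\pm1/2}$ of $J^{\text{tw}}$ is unsupported. For the same reason, connectivity does not follow from ``the colouring is determined by the external signs''. Changing the partition of trivalent nodes into resolved trees, and the signs of internal trees, requires combining the pair move with your Move~2.

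\textbf{The real gap: Move~2 is never carried out.} This move is the content of the theorem. It is exactly where the untwisted construction fails: there the discrepancy is $a^{-1}$ (Corollary~\ref{cor:LBS-isomorphisms}), and the twist by $\sqrt a$ is chosen to kill it.
\begin{itemize}
\item The identities you list, $r_3(\Phi^{\sqrt a})=1$ and $\Phi^a=\Phi_{-t}$, concern reparenthesization and play no role in this move.
\item Your fallback does not work. A leftover $\Delta\nu^s\cdot\nu^{-s}\otimes\nu^{-s}$ cannot be ``cancelled against the normalization''; there is no normalization freedom in the star product. Since $m_0$ is equivariant, such a factor would only show the two products are gauge equivalent via the action of $\nu^{s}$, not equal.
\end{itemize}

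\textbf{What is needed.} You must show explicitly that no leftover occurs. On the side with the stand-alone $-$ trivalent vertex:
\begin{itemize}
\item Express the action through the two surviving $\g$-actions by doubling, reversing and capping the reduced strands. Insert $b^{-1}$ to cancel the unnormalized doubled cups.
\item Absorb the global $-t$ (from fusing $-$ points) by reflecting the tangle in a horizontal line, Proposition~\ref{prop:reflections}(2). This trades $a^{\pm1/2}$ for $b^{\pm1/2}$.
\item Use $r_1(b)=\nu^{-1}$ and \eqref{eq:product-of-ab} to turn the residual boxes into $\Delta_1\Delta_2\bigl(a^{-1/2}\,\nu^{-1/2}\otimes\nu^{-1/2}\bigr)$ and two copies of $a^{-1/2}\,\nu^{1/2}\otimes\nu^{1/2}$.
\end{itemize}
On the other side, the two bivalent $-$ vertices contribute $K^{\text{tw}}_{-t}$:
\begin{itemize}
\item Add a four-strand ``comb-over'' cap, at the cost of $a\otimes a\cdot\Delta_1\Delta_2 a$.
\item Turn the $e^{t/2}$ factors into overcrossings.
\end{itemize}
After moving the $\nu^{\pm1/2}$ factors across, the chord boxes agree and the two tangles are identical. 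This is the diagrammatic form of the monoidal isomorphism, via $e^{t/2}$, between fusion with an overcrossing and fusion with an undercrossing. Without this computation, your argument gives at most equality up to an unidentified gauge factor, which is the situation for untwisted fusion, not the theorem.
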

\begin{proof}
All Li-Bland--Ševera spines giving the same trivalent spine are related by a sequence of the following two moves (and analogous moves with reversed signs)
\begin{enumerate}
	\item Adding/removing two binary vertices
	\begin{equation}
	\label{pic:move-one}
	\raisebox{-0.4\height}{ 
	\begin{overpic}{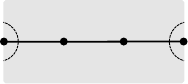}
	\put(3, 35){$+$}
	\put(29.5, 26){$-$}
	\put(61.3, 26){$+$}
	\put(88, 35){$-$}
	\end{overpic}
	}\quad 
	\leftrightarrow
	\quad
	\raisebox{-0.4\height}{ 
	\begin{overpic}{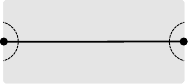}
	\put(3, 35){$+$}
	\put(88, 35){$-$}
	\end{overpic}
	}		
	\end{equation}
	The vertices on the ends are possibly leaves of some resolved vertex, which is denoted by the dashed half-circles.
	\item Moving a leaf inside a resolved vertex to a stand-alone vertex
	\begin{equation}
	\label{pic:move-two}
	\raisebox{-0.4\height}{ 
	\begin{overpic}{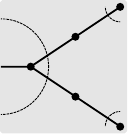}
	\put(3, 90){$+$}
	\put(55, 62.5){$-$}
	\put(55, 30){$-$}
	\put(80, 20){$+$}
	\put(80, 72.5){$+$}
	\put(24, 57){\footnotesize{$2$}}
	\put(24, 35){\footnotesize{$1$}}	
	
	\end{overpic}
	}\quad 
	\leftrightarrow
	\quad
	\raisebox{-0.4\height}{ 
	\begin{overpic}{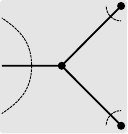}
	\put(3, 90){$+$}
	\put(36, 40){$-$}
		\put(55, 70){\footnotesize{$2$}}
	\put(55, 25){\footnotesize{$1$}}	

	\put(80, 20){$+$}
	\put(80, 72.5){$+$}
	\end{overpic}
	}		
	\end{equation}
	Again, the vertices at the right ends are possibly lying inside a dashed circle.
\end{enumerate}
This is not difficult to see: to upgrade a uni-trivalent spine to a Li-Bland--Ševera spine, one needs to partition the vertices into trees, choose arbitrary signs for the internal trees (not containing any univalent vertices) and then adding bivalent vertices on internal edges (even number between trees with opposite signs and vice versa). The moves above allow to change the partition, the signs of the internal trees (by repeated application of the above moves) and the numbers of bivalent vertices.  

Let us now show that the star products induced on the corresponding moduli spaces $M_{\Sigma, V}(G)$ coincide. The first case, the move \eqref{pic:move-one}, is straightforward. An internal bivalent vertex with $+$ carries an action of $K^\text{tw}$ from \eqref{eq:K-twisted}. The pair of points of opposite signs contributes
\[   K^{\text{tw}} \cdot K^{\text{tw}}_{-t} = a \cdot b \cdot \nu \otimes \nu \cdot \Delta \nu^{-1} \stackrel{\eqref{eq:product-of-ab}}{=} 1. \]
The second case is more involved. Let us look at the right  hand side, where we denote the two actions by $1$ and $2$ as in \eqref{pic:move-two}. At the trivalent vertex with $-$, we have to perform two fusions, i.e.\ the diagram \eqref{diag:double-fusion-twisted}. We are free to choose which strand is the action $1$ and which is $2$, by Proposition \ref{prop:twisted-properties}. 

Since we act on invariants, we can again convert everything into $\g$-actions $1$ and $2$ only, by doubling the third and sixth strand, reversing them and connecting. This connection contributes a doubled cup, if we include it in the Kontsevich integral, we have to cancel the two contributions of $b$ by inserting $b^{-1}$. All together, the star product is given by the action of 
\[ Z \left(  \raisebox{-0.49\height}{
\begin{overpic}{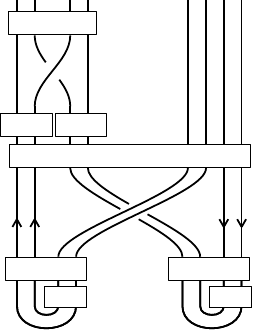}
\put(8, 16){\footnotesize{$a^{-1/2}$}}
\put(58, 16){\footnotesize{$a^{-1/2}$}}
\put(36, 50){\footnotesize{$a^{1/2}$}}

\put(15, 7.5){\footnotesize{$b^{-1}$}}
\put(66, 7.5){\footnotesize{$b^{-1}$}}

\put(54, 103){\footnotesize{$f_1$}}
\put(60, 103){\footnotesize{$f_2$}}
\put(66, 103){\footnotesize{$g_1$}}
\put(72, 103){\footnotesize{$g_2$}}

\put(1, 60){\footnotesize{$a^{-1/2}$}}
\put(18, 60){\footnotesize{$a^{-1/2}$}}
\put(10, 91){\footnotesize{$a^{1/2}$}}
\end{overpic}}  \right)_{-t} 
\; = \; 
Z \left(  \raisebox{-0.49\height}{
\begin{overpic}{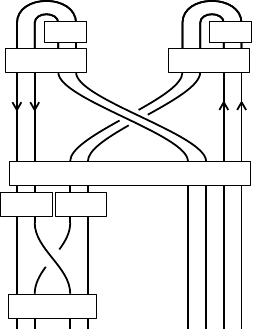}
\put(8, 79){\footnotesize{$b^{-1/2}$}}
\put(58, 79){\footnotesize{$b^{-1/2}$}}
\put(36, 45){\footnotesize{$b^{1/2}$}}

\put(15, 88){\footnotesize{$a^{-1}$}}
\put(66,88){\footnotesize{$a^{-1}$}}

\put(54, -4.5){\footnotesize{$f_1$}}
\put(60, -4.5){\footnotesize{$f_2$}}
\put(66, -4.5){\footnotesize{$g_1$}}
\put(72, -4.5){\footnotesize{$g_2$}}

\put(1, 36){\footnotesize{$b^{-1/2}$}}
\put(18, 36){\footnotesize{$b^{-1/2}$}}
\put(10, 5){\footnotesize{$b^{1/2}$}}
\end{overpic}}  \right) \] 
\vspace{1mm}

To get the equality, we used the compatibility of $Z$ with reflections around horizontal axis \eqref{eq:reflection-about-horizontal-axis} and $a_{-t}=b$ \eqref{eq:ab-minus-t}. Next, we use $r_1(b) = \nu^{-1}$ \eqref{eq:ri-of-ab} and move the boxes around to arrive at
\begin{equation} \label{eq:second-move-final} Z \left(  \raisebox{-0.47\height}{
\begin{overpic}{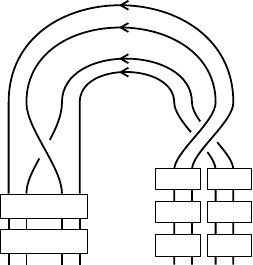}
\put(62, -6){\footnotesize{$f_1$}}
\put(69, -6){\footnotesize{$f_2$}}
\put(78, -6){\footnotesize{$g_1$}}
\put(86, -6){\footnotesize{$g_2$}}

\put(59, 29.5){\footnotesize{$b^{-1/2}$}}
\put(78.5, 29.5){\footnotesize{$b^{-1/2}$}}

\put(62, 4.5){\footnotesize{$a^{-1}$}}
\put(82.5, 4.5){\footnotesize{$a^{-1}$}}

\put(61, 17.1){\footnotesize{$\nu^{1/2}$}}
\put(81, 17.1){\footnotesize{$\nu^{1/2}$}}

\put(9, 20){\footnotesize{$\nu^{-1/2}$}}
\put(10, 6){\footnotesize{$b^{1/2}$}}
\end{overpic}}  \right). \end{equation}
\vspace*{1mm}

Using  $a\cdot b\cdot  \nu\otimes \nu \cdot \Delta \nu^{-1} = 1$ \eqref{eq:product-of-ab}, we can simplify the two chord diagrams on the bottom. On the bottom left, we have $\Delta_1 \Delta_2$ of the following chord diagram
\begin{equation}\label{eq:m2-lhs-left}
\quad b^{1/2} \cdot \Delta\nu^{-1/2} = a^{-1/2} \cdot \nu^{-1/2} \otimes \nu^{-1/2}.
\end{equation}
On the bottom right, we have two copies of 
\begin{equation}\label{eq:m2-lhs-right}
b^{-1/2} \cdot \Delta \nu^{1/2} \cdot a^{-1} = a^{-1/2} \cdot \nu^{1/2} \otimes \nu^{1/2}.
\end{equation}

The other side of the move \eqref{pic:move-two} produces a star product with a single twisted fusion and the terms $K^\text{tw}_{-t}=a\cdot e^{t/2}\cdot \Delta \nu^{-1/2} \cdot \nu^{1/2}\otimes \nu^{1/2}$ from the two bivalent vertices
\[ Z \left(  \raisebox{-0.47\height}{
\begin{overpic}{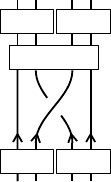}
\put(3, 6.5){\footnotesize{$a^{-1/2}$}}
\put(33, 6.5){\footnotesize{$a^{-1/2}$}}

\put(5,-9){\footnotesize{$f_1$}}
\put(16, -9){\footnotesize{$f_2$}}
\put(35, -9){\footnotesize{$g_1$}}
\put(46, -9){\footnotesize{$g_2$}}

\put(22, 64){\footnotesize{$a^{1/2}$}}

\put(3, 84){\footnotesize{$K^\text{tw}_{-t}$}}
\put(34, 84){\footnotesize{$K^\text{tw}_{-t}$}}
\end{overpic}}  \right). \]
\vspace*{3mm}

We can make it look more similar to \eqref{eq:second-move-final} by adding a four-strand cap (a ``comb-over'') to the left, for the price of cancelling a $a\otimes a \cdot \Delta_1 \Delta_2 a$ term. The factors $e^{t/2}$ can be made into overcrossings, and we get
\begin{equation} \label{eq:move2-rhs}
 Z \left(  \raisebox{-0.47\height}{
\begin{overpic}{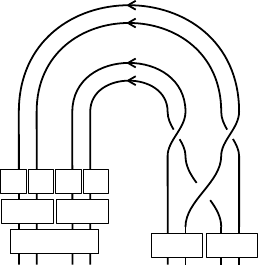}
\put(60, -6){\footnotesize{$f_1$}}
\put(67, -6){\footnotesize{$f_2$}}
\put(81, -6){\footnotesize{$g_1$}}
\put(88.5, -6){\footnotesize{$g_2$}}

\put(59, 4.5){\footnotesize{$a^{-1/2}$}}
\put(80, 4.5){\footnotesize{$a^{-1/2}$}}

\put(0, 29.5){\footnotesize{$\sqrt\nu$}}
\put(10, 29.5){\footnotesize{$\sqrt\nu$}}
\put(20.4, 29.5){\footnotesize{$\sqrt\nu$}}
\put(30.7, 29.5){\footnotesize{$\sqrt\nu$}}

\put(23, 17.5){\footnotesize{$\nu^{-1/2}$}}
\put(1, 17.5){\footnotesize{$\nu^{-1/2}$}}

\put(10, 6){\footnotesize{$a^{-1/2}$}}
\end{overpic}}  \right). \end{equation}
\vspace*{3mm}
Moving the $\nu^{1/2}$ terms to the other side, we get exactly the chord diagrams from \eqref{eq:m2-lhs-left} and \eqref{eq:m2-lhs-right}. It remains to notice that the two tangles above are identical.
\end{proof}
\begin{remark}
The equality of the tangles in \eqref{eq:second-move-final} and \eqref{eq:move2-rhs}  from the above proof is the chord-diagrammatic version of the fact that the fusion functor with undercrossing and overcrossing are naturally monoidally isomorphic, via $e^{t/2}$. 
\end{remark}
Recall that all uni-trivalent spines are connected by a sequence of flips \eqref{pic:flip}. 
\begin{theorem}\label{thm:flips-isomorphisms}
	If a uni-trivalent spine $\Gamma_{e}\subset \Sigma$ is obtained from $\Gamma$ by a flip of an edge $e$, the corresponding deformation quantizations of $C^\infty(M_{\Sigma, V}(G), \mathbb{C})$ are isomorphic by the action of the twisted associator $\Phi^{\sqrt{a}}\colon A_\Gamma \to A_{\Gamma_e}$ at the three edge endpoints $2,3,4$ as below 
	\[\raisebox{-0.47\height}{
\begin{overpic}{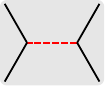}
\put(-6, -13){\footnotesize{$3$}}
\put(92, -13){\footnotesize{$4$}}
\put(-6, 84){\footnotesize{$2$}}
\put(92, 84){\footnotesize{$1$}}
\end{overpic}}.  \]
	If $\Gamma'$ is a different uni-trivalent spine, choose a sequence of flips from $\Gamma$ to $\Gamma'$. Then the isomorphisms $\Psi_{\Gamma \to \Gamma'} \colon A_\Gamma \to A_{\Gamma'}$, given by composing the associators, does not depend on the sequence of flips we have chosen.   
\end{theorem}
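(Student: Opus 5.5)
The plan has two parts: first the single-flip isomorphism, then independence of the chosen path via Penner's presentation of the Ptolemy groupoid (Theorem \ref{thm:flips}).

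\textbf{Single flip.} By Theorem \ref{thm:unique-quantizations}, I am free to choose any convenient Li-Bland--Ševera spine lifting $\Gamma$ and $\Gamma_e$. Near $e$, I would choose the same lift for both, except at the two trivalent endpoints of $e$. There I resolve them as a single internal vertex of valency four, resolved by a planar trivalent tree. All endpoints $1,2,3,4$ are external to this local picture, so the four half-edges meet at one interior point whose fusion tree is either $((2\,3)\,4)$ or $(2\,(3\,4))$, with $1$ the remaining leg.

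To set this up, I insert bivalent vertices on the outer edges using move \eqref{pic:move-one} until the four-valent interior vertex carries a sign compatible with all its neighbours. I then move the reduction point to the boundary side of leg $1$, using the twisted version of Proposition \ref{prop:reduction-and-opposite-fusion} in Proposition \ref{prop:twisted-properties}.

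The two resolutions $((2\,3)\,4)$ and $(2\,(3\,4))$ correspond precisely to $\Gamma$ and $\Gamma_e$. By the second statement of Proposition \ref{prop:twisted-properties}, before reduction the two bracketings of triple fusion are monoidally isomorphic via $\Phi^{\sqrt a}$ acting on the three fused actions $2,3,4$. A monoidal natural isomorphism sends algebras to isomorphic algebras, and the remaining fusions and reductions are functors, so they preserve this. Hence $\Phi^{\sqrt a}$ gives an algebra isomorphism $A_\Gamma\to A_{\Gamma_e}$.

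One point needs care: the final reduction at the fused vertex is not taken here. The third statement of Proposition \ref{prop:twisted-properties} (triviality on invariants) would make the map the identity, and that is only relevant when the fourth leg is also fused in.

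\textbf{Independence of the path.} By Theorem \ref{thm:flips}, it suffices to check the three relations.

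\emph{Involutivity:} the flip back uses the inverse bracketing change, so it acts by $(\Phi^{\sqrt a})^{-1}$, with labels $2,3,4$ permuted appropriately. The needed identity is $(\Phi^{\sqrt a})^{321}=(\Phi^{\sqrt a})^{-1}$, which holds because $a^{21}=a$ and $\Phi(Y,X)=\Phi(X,Y)^{-1}$.

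\emph{Naturality:} flips on edges with no common vertex act on disjoint sets of half-edges, so the corresponding actions commute.

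\emph{Pentagon:} two flippable edges sharing a vertex give a five-valent local configuration. I would resolve all five spines as one interior vertex with four legs fused in some bracketing, the fifth leg carrying the reduction. The five flips are then the five bracketing changes of four fused actions, each acting by $\Delta_i$ of $\Phi^{\sqrt a}$ as in Proposition \ref{prop:two-fusions}. The twisted associator satisfies the pentagon \eqref{eq:assoc:pentagon}, because twisting preserves the pentagon.

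\textbf{Main obstacle.} The step I expect to be hardest is bookkeeping in the pentagon and involutivity steps. I must check that the local relabelings (which half-edge is ``$2$'', and the cyclic rotation of the leg carrying the reduction) agree with the operadic labels in \eqref{eq:assoc:pentagon}. When rotating the reduction leg I would invoke Proposition \ref{prop:twisted-properties} and the symmetry $(\Phi^{\sqrt a})$ under $r$-operations. I would also verify that changing the auxiliary lifts, via Theorem \ref{thm:unique-quantizations}, does not alter the isomorphism, since those moves give equal star products and the associator acts on the same tensor factors.
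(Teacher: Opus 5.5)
Your single-flip argument and your naturality step are fine and agree with the paper. You lift to a Li-Bland--Ševera spine in which $e$ is internal to one tree, so that $\Gamma$ and $\Gamma_e$ differ only by the bracketings $((2\,3)\,4)$ and $(2\,(3\,4))$ under the root leg $1$, and then apply Proposition~\ref{prop:twisted-properties}.

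\textbf{The gap is in involutivity.} The isomorphism attached to a flip is read off by placing the flipped edge in the standard picture by an orientation-preserving map. Fusion depends on the cyclic order at vertices, so reflections are not allowed.
\begin{itemize}
\item For $\Flip(e')$ on $\Gamma_e$, the admissible placements are quarter turns of the picture. These put $\Phi^{\sqrt a}$ on legs $1,2,3$ (root $4$) or on legs $3,4,1$ (root $2$).
\item No admissible placement fixes leg $1$ and exchanges $e$ with $e'$. The relabeling $2\leftrightarrow 4$ that produces your $(\Phi^{\sqrt a})^{321}$ is a reflection.
\item So the identity you check, $(\Phi^{\sqrt a})^{321}=(\Phi^{\sqrt a})^{-1}$, is true but is not the relevant one.
\end{itemize}
What is needed is
\[ (\Phi^{\sqrt a})^{123}\cdot(\Phi^{\sqrt a})^{234}=1 \quad \text{on invariants of the diagonal action on four factors,} \]
i.e.\ that the flip isomorphism does not depend on which leg of the merged four-valent vertex plays the root.

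This same identity enters your argument in two more places.
\begin{itemize}
\item It is needed for the flip isomorphism to be well defined at all. The standard picture can also be placed rotated by a half turn, which gives $(\Phi^{\sqrt a})^{412}$ instead of $(\Phi^{\sqrt a})^{234}$.
\item It is used silently in your pentagon step. With the fifth leg fixed as root, some of the five flips in the loop are backward bracketing changes. Their prescribed isomorphisms are $\Phi^{\sqrt a}$ on triples containing the root, and identifying them with $(\Delta_i\Phi^{\sqrt a})^{-1}$ is exactly this identity.
\end{itemize}

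Your plan to rotate the reduction leg using Proposition~\ref{prop:twisted-properties} does not close the gap. That proposition only says $\Phi^{\sqrt a}$ acts trivially on invariants of three factors; here it acts on three of four legs and is not trivial.

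For a horizontal $\Phi$ your relabeling can be justified by other means. Eliminating leg $1$ via invariance turns $t_{12}$ into $-(t_{22}+t_{23}+t_{24})$, which differs from $t_{34}$ by a central element. Since $\log\Phi$ consists of brackets, $\Phi^{123}$ becomes $\Phi(t_{34},t_{23})$, which is your $(4,3,2)$ relabeling. However, $\Phi^{\sqrt a}$ is not known to be horizontal (Remark~\ref{rem:twisted-horizontal}), so this shortcut is unavailable.

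The missing idea is the paper's: evaluate the pentagon \eqref{eq:assoc:pentagon} for $\Phi^{\sqrt a}$ on four-fold invariants. Each factor $\Delta_i\Phi^{\sqrt a}$ is $\Phi^{\sqrt a}$ acting on invariants of three factors, one of which is a tensor product. It is therefore trivial by $r_3(\Phi^{\sqrt a})=1$. What remains is $(\Phi^{\sqrt a})^{234}(\Phi^{\sqrt a})^{123}=1$. With this identity in hand, well-definedness, involutivity and the pentagon all go through as you intend.
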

\begin{proof}
We can assume that the edge $e$ along which we flip is an internal edge of some tree of the  Li-Bland--Ševera spine. Then the first result follows from Proposition \ref{prop:twisted-properties}. 

For the second claim, we need to verify that the isomorphisms satisfy the three relations from  Theorem \ref{thm:flips}. The second relation is satisfied, since the actions at different half-edges commute, if we e.g.\ make all internal vertices to be distinct in a Li-Bland--Ševera spine. The third relation, the pentagon, holds since twisted associators satisfy the same pentagon equation \eqref{eq:assoc:pentagon}. It remains to check the first relation, which can be written as 
\begin{equation} \label{eq:PhiPhi} (\Phi^{\sqrt{a}})^{123} \cdot (\Phi^{\sqrt{a}})^{234} = 1 \end{equation}
acting on invariants with respect to the diagonal $G$-action. This follows from the pentagon equation \eqref{eq:assoc:pentagon} for the twisted associator, where the terms with $\Delta_i \Phi^{\sqrt{a}}$ disappear by $r_3(\Phi^{\sqrt{a}})=1$ from  Proposition \ref{prop:twisted-properties} (on the level of chord diagrams, this follows from the fact that $r_i$ and $\Delta_j$ commute up to suitable reindexing). We remark that, for an even associator, the twist $\sqrt{a}$ is trivial by \eqref{eq:ab-even-associator}, and \eqref{eq:PhiPhi} reduces to $\Phi \Phi^{321}= 1$, i.e.\ the defining property \eqref{eq:assoc:inverse}. 
\end{proof}
Since the twisted associator $\Phi^{\sqrt{a}}$ is even (see Remark \ref{rem:twisted-horizontal}), the isomorphism does not depend on the sign of the vertex in which it acts. Similarly, by looking at the flipped edge upside down, instead of the action $(\Phi^{\sqrt{a}})^{234}$ we would get $(\Phi^{\sqrt{a}})^{412}$; these are equal applying equation \eqref{eq:PhiPhi} and its renumbering $(4123)$.
\medskip

If there are no marked points on the boundary, we can consider the algebra of $\g^V$-invariant functions on $M_{\Sigma, V}(G)$, which has an associative star product.
\begin{corollary}
	Let $\Sigma$ be a surface with non-empty boundary in each connected component. Then Theorems \ref{thm:unique-quantizations} and \ref{thm:flips-isomorphisms} extend to the moduli space $M_\Sigma(G)$ in the sense that $C^\infty(M_{\Sigma, V}(G), \mathbb C)^{\g^V}$ inherits a star-product which depends only on the trivalent spine $\Gamma\subset \Sigma$, and the maps $\Psi_{\Gamma \to \Gamma'}$ give a collection of algebra isomorphisms closed under composition. 
\end{corollary}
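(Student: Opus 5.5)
The plan is to deduce the corollary from Theorems \ref{thm:unique-quantizations} and \ref{thm:flips-isomorphisms} by a single application of the reduction functor $\g^V\text{-inv}$, which is lax monoidal and therefore sends algebras to algebras and algebra morphisms to algebra morphisms. I first fix an auxiliary marking. On each boundary component I choose marked points $V$, say one per component, all decorated with the same sign. A uni-trivalent spine $\Gamma$ of $\Sigma$ without external vertices extends to a uni-trivalent spine $\Gamma_V$ of $(\Sigma, V)$ by attaching, at each $v\in V$, a new edge running from $v$ to the interior of an edge of $\Gamma$; this creates a new trivalent vertex. Theorem \ref{thm:unique-quantizations} gives an algebra $A_{\Gamma_V}$ in $\Umod{\g^{V_+}\oplus\bar\g^{V_-}}$. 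I then define the star product on $\Sigma$ as the reduction $A_{\Gamma_V}^{\g^V}$, which is an associative algebra in vector spaces over $\mathbb{C}[[\hbar]]$.

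The main step is to show that this reduced product depends only on $\Gamma$. There are two possible sources of dependence.

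\textbf{Which edge the new leaf attaches to.} Two attachment choices differ by moving the attachment point along $\Gamma$ through trivalent vertices. Each such move is a flip of the short edge joining the leaf's vertex to an adjacent trivalent vertex of $\Gamma$. By Theorem \ref{thm:flips-isomorphisms}, the corresponding algebras are related by the action of $\Phi^{\sqrt a}$ at three half-edges, one of which is the leaf's half-edge ending at $v$. On $\g^V$-invariants the action at $v$ can be rewritten through the other actions, exactly as in the proof of Proposition \ref{prop:two-fusions-and-reduction}. I expect this to be the main obstacle: I must check that the isomorphism really becomes the identity here, so that reduction also forgets where the leaf sits.

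For that check I would use $r_3(\Phi^{\sqrt a})=1$ from Proposition \ref{prop:twisted-properties}, together with its analogues $r_1$ and $r_2$. These analogues follow from \eqref{eq:PhiPhi} and the cyclic renumberings noted after Theorem \ref{thm:flips-isomorphisms}. Alternatively, Remark \ref{rem:twisted-horizontal} lets me place $v$ in the third slot by relabelling, after which $r_3$ applies directly.

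\textbf{Choice of the marked points.} Independence of the number and position of the marked points follows in the same way. Reducing at a point $v$ whose leaf attaches next to another leaf $w$ is handled by Proposition \ref{prop:twisted-properties}, whose first statement covers two fusions followed by reduction. Independence of the sign of $v$ follows from the twisted bivalent-vertex computation $K^{\text{tw}}K^{\text{tw}}_{-t}=1$ in the proof of Theorem \ref{thm:unique-quantizations}.

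Finally, for a flip of an edge $e$ of $\Gamma$ itself, I choose the leaves to attach away from $e$. The flip then lifts to a flip of $\Gamma_V$, and I set $\Psi_{\Gamma\to\Gamma_e}$ to be the restriction of the isomorphism from Theorem \ref{thm:flips-isomorphisms} to invariants. This restriction is well defined because $\Phi^{\sqrt a}$ is $\g$-invariant, so it commutes with the $\g^V$-action. It is an algebra isomorphism because reduction is a functor. The restricted maps satisfy involutivity, naturality and the pentagon, since these relations already hold upstairs. By Theorem \ref{thm:flips}, which also holds for spines without external vertices, the maps $\Psi_{\Gamma\to\Gamma'}$ are therefore independent of the chosen sequence of flips and are closed under composition.
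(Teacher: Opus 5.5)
\textbf{Gap in the key step.} Your overall plan is workable: fix auxiliary marked points, reduce, and transport the flip isomorphisms of Theorem~\ref{thm:flips-isomorphisms} to invariants. The problem is the step you flag as the main obstacle, which you justify by a mechanism that does not apply. The identity $r_3(\Phi^{\sqrt a})=1$ is the third part of Proposition~\ref{prop:twisted-properties}, the twisted form of Proposition~\ref{prop:two-fusions-and-reduction}. It applies when the three slots of the associator are \emph{all} the legs of a reduced vertex, so their total action vanishes and one slot can be rewritten through the other two.

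In the flip that moves your leaf, $\Phi^{\sqrt a}$ acts on three of the four legs $L,X,Y,Z$ around the flipped edge. Invariance at the merged vertex only gives $L+X+Y+Z=0$, so the three slots sum to minus the fourth leg, not to zero. Hence none of $r_1,r_2,r_3$ is available. Relabelling via Remark~\ref{rem:twisted-horizontal} does not help, because the obstruction is the sum of the slots, not which slot the leaf occupies. Nor does $\g^v$-invariance give a relation of this kind: $v$ is univalent, and the action at $v$ is not one of the slots (the slot is the leaf's action at its \emph{inner} end). An argument that uses only this type of rewriting would apply verbatim to every flip, so it cannot be what makes the leaf special.

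\textbf{The fix.} What you need is that the leaf itself dies after reduction. It is a single edge whose other end is the reduced point $v$, so on $\g^v$-invariant functions its holonomy is gauged away and its action at the attachment vertex vanishes identically. There are then two ways to conclude:
\begin{itemize}
\item If the leaf occupies one slot, $\Phi^{\sqrt a}$ acts as $\varepsilon_i(\Phi^{\sqrt a})=1$, using $\varepsilon_i\Phi=1$ and $\varepsilon_i(a)=1$.
\item If you choose the three slots away from the leaf, they now do sum to zero and $r_3(\Phi^{\sqrt a})=1$ applies.
\end{itemize}
With this fix your argument goes through. The same observation handles your paragraph on the choice of marked points. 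As written, that paragraph only refers back to the flawed step, and it cites the first statement of Proposition~\ref{prop:twisted-properties} where the third is meant.

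\textbf{Comparison with the paper.} The observation also makes the flip detour unnecessary, and this is the content of the paper's short proof. Reductions commute with each other and with fusions at other vertices, so you may reduce at $v$ first and fuse the leaf last at its (reduced) attachment vertex. There $\varepsilon_2\varepsilon_4(J^{\text{tw}})=1$, so the leaf disappears and leaves a bivalent vertex, which Theorem~\ref{thm:unique-quantizations} forgets. Thus the reduced algebra is simply that of a Li-Bland--\v{S}evera spine of $\Sigma$ with all vertices interior. The paper concludes directly from Theorems~\ref{thm:unique-quantizations} and~\ref{thm:flips-isomorphisms}, together with Penner's presentation (Theorem~\ref{thm:flips}) for surfaces without marked points.
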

\begin{proof}
	The presentation of the Ptolemy groupoid from Theorem \ref{thm:flips} also works for surfaces without marked points on the boundary. The fact that different choices of $(\Sigma, V)$ give the same star product, which thus depends only on the trivalent graph, follows from the fact that the resulting star product does not depend on the order in which we take the $\g$-invariants.
\end{proof}

\medskip
Let $F$ be a mapping class which fixes each boundary component pointwise. Denote by $F^*$ the automorphism of  $M_{\Sigma, V}(G)$ given by $F^*(\rho)(\gamma) = \rho(F(\gamma))$; the pullback of this automorphism $F\coloneq(F^*)^* \colon C^{\infty}(M_{\Sigma, V}(G), \mathbb C) \to C^{\infty}(M_{\Sigma, V}(G), \mathbb C)$ intertwines the star products on $A_\Gamma$ and $A_{F\Gamma}$ for trivial reasons.
\begin{corollary}
For a mapping class $F$, let $\Psi_{\Gamma \to F\Gamma}\colon A_\Gamma \to A_{F\Gamma}$ be the isomorphism given by a sequence of flips from the uni-trivalent spine $\Gamma$ to $F(\Gamma)$. Then the algebra isomorphisms $\Psi_F \coloneq F^{-1}\circ \Psi_{\Gamma \to F\Gamma}\colon A_\Gamma \to A_\Gamma$ define a left action of the mapping class group on the deformation quantization $A_\Gamma$.
\end{corollary}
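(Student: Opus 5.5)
The plan is to check the two axioms of a left action, $\Psi_{\mathrm{id}} = \mathrm{id}$ and $\Psi_{F_1 F_2} = \Psi_{F_1}\circ\Psi_{F_2}$. Both will follow from the coherence statement of Theorem \ref{thm:flips-isomorphisms} (the maps $\Psi_{\Gamma\to\Gamma'}$ are independent of the chosen sequence of flips and hence closed under composition), together with one naturality property relating $\Psi$ to mapping classes.

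\textbf{Step 1 (naturality).} For any uni-trivalent spines $\Gamma,\Gamma'$ and any mapping class $F$ I will show
\[ F\circ \Psi_{F\Gamma\to F\Gamma'} = \Psi_{\Gamma\to\Gamma'}\circ F \colon A_{F\Gamma}\to A_{\Gamma'}, \]
where $F$ denotes the pullback, which intertwines $A_{F\Gamma'}$ with $A_{\Gamma'}$. Choose a flip sequence from $\Gamma$ to $\Gamma'$; applying $F$ gives a flip sequence from $F\Gamma$ to $F\Gamma'$. It therefore suffices to treat a single flip. The flip isomorphism is the action of $\Phi^{\sqrt a}$ via the $\g$-actions at the three half-edge endpoints $2,3,4$, and these actions are the left/right translations on the holonomies of the corresponding edges. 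The pullback $F$ carries the holonomy of an edge $\gamma$ of $F\Gamma$ to that of the edge $F^{-1}\gamma$ of $\Gamma$, so it intertwines the corresponding $\g$-actions. Hence it intertwines the action of $\Phi^{\sqrt a}$ at the flipped edge of $F\Gamma$ with the action at the flipped edge of $\Gamma$. Since $F$ fixes the boundary pointwise, it also preserves the framing actions at $V$, so the $\g^V$-module structures match. This is the same ``trivial reason'' that makes $F$ intertwine the star products.

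\textbf{Step 2 (composition).} Unfolding the definitions,
\[ \Psi_{F_1}\circ\Psi_{F_2} = F_1^{-1}\Psi_{\Gamma\to F_1\Gamma}\, F_2^{-1}\Psi_{\Gamma\to F_2\Gamma}. \]
By Step 1 applied with $F_2$ to the pair $(\Gamma, F_1\Gamma)$, we have $\Psi_{\Gamma\to F_1\Gamma}F_2^{-1} = F_2^{-1}\Psi_{F_2\Gamma\to F_2F_1\Gamma}$. This is an order problem: for a left action I need $F_1F_2$, so I should instead apply Step 1 with $F_1$ to the pair $(\Gamma,F_2\Gamma)$ after reordering. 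The pullback satisfies $(F_1F_2)^{*}$-contravariance, giving $(F_1F_2)^{-1}$ as a composite of $F_1^{-1}$ and $F_2^{-1}$ in some order. With the correct convention the expression becomes
\[ (F_1F_2)^{-1}\,\Psi_{F_1\Gamma\to F_1F_2\Gamma}\,\Psi_{\Gamma\to F_1\Gamma}, \]
and coherence from Theorem \ref{thm:flips-isomorphisms} collapses $\Psi_{F_1\Gamma\to F_1F_2\Gamma}\circ\Psi_{\Gamma\to F_1\Gamma}$ to $\Psi_{\Gamma\to F_1F_2\Gamma}$, which yields $\Psi_{F_1F_2}$.

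\textbf{Step 3 (identity).} $\Psi_{\mathrm{id}}=\Psi_{\Gamma\to\Gamma}=\mathrm{id}$, again by coherence, since the empty flip sequence is allowed. The result also does not depend on the representative of the mapping class: isotopic maps induce the same map on $\Pi_1(\Sigma,V)$, hence the same $F^*$, and spines are taken up to isotopy.

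The main obstacle is getting the variance conventions right in Step 2. I will fix them first: whether $F^*(\rho)(\gamma)=\rho(F(\gamma))$ makes $F\mapsto F$ (as a map on functions) covariant or contravariant, and whether $F$ intertwines $A_\Gamma$ with $A_{F\Gamma}$ or with $A_{F^{-1}\Gamma}$. If the literal formula gives a right action, the fix is to compose on the appropriate side or replace $F$ by $F^{-1}$. The substantive content is only naturality (Step 1) plus coherence.
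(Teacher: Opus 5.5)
Your strategy is the same as the paper's one-line proof: transport a flip sequence by $F$ to get naturality, collapse composites using the path-independence of Theorem \ref{thm:flips-isomorphisms}, and use the empty sequence for the identity. The gap is in Step 2, which is the only real content of the corollary.

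Your first manipulation there is correct and yields $\Psi_{F_1}\circ\Psi_{F_2}=\Psi_{F_2F_1}$. You then replace this by an expression obtained ``with the correct convention'', but that expression is not derived from anything. It equals $\Psi_{F_1F_2}$ trivially by coherence. Expanding it with naturality, however, gives
\[ (F_1F_2)^{-1}\Psi_{F_1\Gamma\to F_1F_2\Gamma}\Psi_{\Gamma\to F_1\Gamma} = F_2^{-1}F_1^{-1}\bigl(F_1\Psi_{\Gamma\to F_2\Gamma}F_1^{-1}\bigr)\Psi_{\Gamma\to F_1\Gamma} = \Psi_{F_2}\circ\Psi_{F_1}, \]
so the composition law $\Psi_{F_1F_2}=\Psi_{F_1}\circ\Psi_{F_2}$ is never established.

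Step 1 also has the pullback pointing the wrong way relative to the definition of $\Psi_F$. With holonomy coordinates $c_\Gamma(\rho)=(\rho(e))_e$ one has $c_{F\Gamma}=c_\Gamma\circ F^*$. Hence:
\begin{itemize}
\item $F=(F^*)^*$ is an algebra map $A_\Gamma\to A_{F\Gamma}$, not $A_{F\Gamma}\to A_\Gamma$;
\item it is covariant on functions, i.e.\ $F_1F_2\mapsto F_1\circ F_2$;
\item naturality reads $\Psi_{F\Gamma\to F\Gamma'}=F\circ\Psi_{\Gamma\to\Gamma'}\circ F^{-1}$.
\end{itemize}
This last identity is the form you actually used in the first line of Step 2, not the one you stated in Step 1.

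Once the conventions are fixed this way, the order reversal is genuine, not an artifact of bookkeeping. At $\hbar=0$ every $\Psi_{\Gamma\to\Gamma'}$ is the identity, since all chord diagrams involved are $1$ plus terms with chords. So $\Psi_F|_{\hbar=0}=F^{-1}$, which reverses the order of products. No choice of convention for the flip isomorphisms can change this.

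What you can actually prove is $\Psi_{F_1}\circ\Psi_{F_2}=\Psi_{F_2F_1}$. This is a left action if mapping classes are composed left to right. That is how the paper's proof is phrased: the flip sequence from $\Gamma$ to $F'(\Gamma)$, transported by $F$, ends at $F(F'(\Gamma))$, which the paper writes as $F'F(\Gamma)$. For the usual composition, the left action is
\[ F\mapsto\Psi_F^{-1}=\Psi_{F\Gamma\to\Gamma}\circ F=\Psi_{F^{-1}}. \]
Your closing remark anticipates this. A proof, however, has to commit to one of these readings and carry out the short computation, rather than appeal to an unspecified correct convention. Steps 1 and 3 are otherwise fine.
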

\begin{proof}
One needs to check that this assignment is compatible with composition of mapping classes. This follows since the sequence of flips which takes $\Gamma$ to $F'(\Gamma)$ also takes $F(\Gamma)$ to $F'F(\Gamma)$.
\end{proof}
For example, a flip 
\[ \raisebox{-0.45\height}{\begin{overpic}{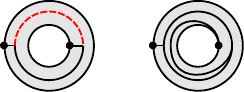}
\put(47, 18){$\leadsto$}
\end{overpic}
} \]
acts by $(\Phi^{\sqrt{a}})(L1, L2, R1)$ on $C^\infty(M_{\mathrm{Ann}, \{v_1, v_2\} }, \mathbb C)(G) \cong C^\infty(G\times G, \mathbb{C})$.\footnote{Superscripts denote left- and right-invariant vector field on the two $G$ factors, which are given by the holonomy along paths going to the left, over and under the inner circle.} Therefore the action of the mapping class twisting the inner boundary of the annulus clockwise is given by such $\Phi$ followed by the diffeomorphism $(g_1, g_2) \mapsto (g_2, \Ad_{g_2^{-1}}g_1)$.
\medskip

Since the twisted and untwisted fusion give isomorphic functors (and therefore deformation quantizations), we can also use the above result to characterize isomorphisms between the original Li-Bland--Ševera \cite{LBS:DQ} quantizations.
\begin{corollary}\label{cor:LBS-isomorphisms}
	There are algebra isomorphisms between each pair of deformation quantizations given by Li-Bland--Ševera spines, and these isomorphisms are closed under composition (as in \eqref{diag:coherent-quantizations}). Explicitly, these isomorphisms are given by 
	\begin{enumerate}
		\item the action of $\nu^{-1}$ for removing a pair of neighboring bivalent vertices,
		\item the action of the  associator $\Phi$ for a reparenthesization (acting as in Theorem \ref{thm:flips-isomorphisms} but the associator is untwisted),
		\item the action of $a^{-1}$ for the move described in \eqref{pic:move-two}.
	\end{enumerate}
\end{corollary}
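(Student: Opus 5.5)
The plan is to transport the coherent family from the twisted setting back to the untwisted one. By Proposition \ref{prop:twisted-untwisted-isomorphic}, the natural automorphism given by the action of $a^{\tfrac 12}$ on the fusion functor is a monoidal isomorphism $T \Rightarrow T^\text{tw}$. For each Li-Bland--Ševera spine $\tilde\Gamma$, the star product $\star_{\tilde\Gamma}$ is built from untwisted fusions and reductions, and $\star^\text{tw}_{\tilde\Gamma}$ from twisted ones. Composing these monoidal isomorphisms along the gluing procedure, and whiskering with reductions and later fusions, gives an explicit algebra isomorphism $\Theta_{\tilde\Gamma}\colon A_{\tilde\Gamma} \to A^\text{tw}_{\tilde\Gamma}$. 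Concretely, $\Theta_{\tilde\Gamma}$ is a product of actions of $a^{\pm 1/2}$, cabled over the half-edges feeding each fusion.

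The twisted algebra $A^\text{tw}_{\tilde\Gamma}$ depends only on the underlying uni-trivalent spine $\Gamma$, by Theorem \ref{thm:unique-quantizations}. We therefore define
\[ \Psi_{\tilde\Gamma \to \tilde\Gamma'} \coloneq \Theta_{\tilde\Gamma'}^{-1}\circ \Psi_{\Gamma\to\Gamma'}\circ \Theta_{\tilde\Gamma}. \]
Closure under composition is then immediate from Theorem \ref{thm:flips-isomorphisms}, since the inner $\Theta$'s cancel. This settles existence and coherence. What remains is to compute these composites for the three elementary moves, which generate all changes of Li-Bland--Ševera spines: the two moves from the proof of Theorem \ref{thm:unique-quantizations} together with flips and reparenthesizations.

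\textbf{Removing a pair of bivalent vertices.} Here $\Psi$ between the twisted algebras is the identity. The untwisted contribution of the pair is $K\cdot K_{-t} = ab = \Delta\nu\cdot\nu^{-1}\otimes\nu^{-1}$ by \eqref{eq:product-of-ab}. By the computation at the end of Section \ref{sec:case-study}, the action of $\nu$ intertwines the two star products, so the isomorphism in the stated direction is the action of $\nu^{-1}$. Equivalently, $\Theta$ contributes $r_2(a^{\pm1/2})$-type factors equal to $\nu^{\mp1/2}$ on each side, and these combine to $\nu^{-1}$.

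\textbf{Reparenthesization.} The conjugate of $\Phi^{\sqrt a}$ by the twists $a^{1/2}$ on the relevant cabled strands is $\Phi$, by the very definition of the twisted associator. The factors $\Delta_2 a^{1/2}\cdot 1\otimes a^{1/2}$ and their inverses are exactly the components of $\Theta$ at the vertex. Compare also Proposition \ref{prop:two-fusions}.

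\textbf{Move \eqref{pic:move-two}.} This is the main obstacle. The twisted isomorphism is trivial, so we must compute $\Theta_{\tilde\Gamma'}^{-1}\Theta_{\tilde\Gamma}$. On the trivalent side $\Theta$ involves two twisted fusions, with $a^{1/2}$ on the inner fusion cabled over the third strand. Reducing over that strand via $r_i$ and using $r_i(a)=\nu^{-1}$ \eqref{eq:ri-of-ab}, the extra factors collapse. On the other side there is a single fusion plus two bivalent vertices, whose $\Theta$ contributes $\nu^{\pm1/2}$ factors. I would track the factors exactly as in the proof of Theorem \ref{thm:unique-quantizations}, namely equations \eqref{eq:m2-lhs-left}, \eqref{eq:m2-lhs-right} and \eqref{eq:move2-rhs}, but without cancelling the $a^{\pm1/2}$ boxes. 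The expectation is that all $\nu$-factors cancel against each other via Proposition \ref{prop:S-nu} and Proposition \ref{prop:insert}, leaving $a^{-1/2}\cdot a^{-1/2}=a^{-1}$ acting on the pair of strands $1,2$.

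The delicate points are keeping the orientation and reflection conventions straight, including which of $a$ and $b$ appears after the $-t$ reflection \eqref{eq:ab-minus-t}, and checking that the leftover element acts via the correct pair of $\g$-actions.
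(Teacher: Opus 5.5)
Your architecture matches the paper's sketch. You conjugate the coherent family $\Psi_{\Gamma\to\Gamma'}$ of Theorem \ref{thm:flips-isomorphisms} by the untwisted-to-twisted isomorphisms $\Theta_{\tilde\Gamma}$, one twist per fused pair. Closure under composition is then automatic, and the explicit isomorphisms come from computing the composites for the elementary moves. Items 1 and 2 are fine, with two small clarifications for item 1:
\begin{itemize}
\item Each of the two bivalent vertices contributes $\nu^{-1/2}$, namely $r_2(a^{1/2})$ at the $+$ vertex and $r_2(b^{1/2})$ at the $-$ vertex, by \eqref{eq:ri-of-ab}. This is why they combine to $\nu^{-1}$ instead of cancelling, so your ``$\nu^{\mp1/2}$'' should read $\nu^{-1/2}$ in both places.
\item Your first, direct argument via $K\cdot K_{-t}=ab$ only exhibits \emph{some} intertwiner. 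It is the $\Theta$-computation that identifies the member of the coherent family.
\end{itemize}

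The genuine gap is item 3. You leave it as an expectation, and the route you indicate would not reach $a^{-1}$ as written. At the stand-alone trivalent vertex the sign is $-$, so the twist there is $(a^{1/2})_{-t}=b^{1/2}$ by \eqref{eq:ab-minus-t}, not $a^{1/2}$. Moreover, Propositions \ref{prop:S-nu} and \ref{prop:insert} cannot convert $b$'s into $a$'s; the essential input is \eqref{eq:product-of-ab}.

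Here is the missing computation.
\begin{itemize}
\item On the side where $1,2$ are leaves of the resolved $+$ vertex, $\Theta_L=a^{1/2}\cdot\nu^{-1/2}\otimes\nu^{-1/2}$. The factor $a^{1/2}$ comes from the single fusion, and each bivalent $-$ vertex gives $r_2(b^{1/2})=\nu^{-1/2}$.
\item On the other side, $\Theta_R=r_3(\Delta_1 b^{1/2}\cdot b^{1/2}\otimes 1)=\Delta\bigl(r_2(b^{1/2})\bigr)\cdot b^{1/2}=\Delta\nu^{-1/2}\cdot b^{1/2}$. This uses that $r_3$ is an algebra morphism and that $r_3\circ\Delta_1=\Delta\circ r_2$ on $\mathcal P_2$ (the commutation of $r_i$ and $\Delta_j$ already used in the proof of Theorem \ref{thm:flips-isomorphisms}).
\item All factors commute, so \eqref{eq:product-of-ab} gives $\Delta\nu^{-1/2}\cdot b^{1/2}=a^{-1/2}\cdot\nu^{-1/2}\otimes\nu^{-1/2}$, which is \eqref{eq:m2-lhs-left}.
\end{itemize}
Hence $\Theta_L^{-1}\Theta_R=a^{-1}$, as an isomorphism from the stand-alone side of \eqref{pic:move-two} to the resolved side. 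This is exactly the paper's formula $(a^{1/2}\cdot\nu^{-1/2}\otimes\nu^{-1/2})^{-1}\cdot r_3(\Delta_1 b^{1/2}\cdot b^{1/2}\otimes 1)$.

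Had you used $a^{1/2}$ at the $-$ vertex, you would get $\nu^{1/2}\otimes\nu^{1/2}\cdot\Delta\nu^{-1/2}=(ab)^{-1/2}$. That equals $a^{-1}$ only for even associators. So the $a$/$b$ bookkeeping you flagged as delicate is precisely where the content of item 3 lies.
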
 
\begin{proof}[Sketch of a proof]
Each Li-Bland--Ševera spine $\tilde{\Gamma}$ determines two algebras quantizing the same moduli space, using twisted and untwisted fusion. The untwisted fusion case is the deformation quantization from \cite{LBS:DQ}, while in the twisted case we get the deformation quantization assigned to the uni-trivalent spine $\Gamma$. These are isomorphic by Proposition \ref{prop:twisted-untwisted-isomorphic}, and the isomorphism from the untwisted to the twisted algebra is given by actions of $a^{1/2}$ for each fused pair of half-edges. Composing these isomorphisms with the isomorphisms $\Psi$ relating different uni-trivalent spines gives the system of isomorphisms with the desired composition property.

Let us sketch the three isomorphism from the statement. When removing a pair of bivalent vertices explicitly, we act on invariants via the isomorphism $a^{1/2}$, which simplifies to
\[ r_2(a^{1/2}) \stackrel{\eqref{eq:ri-of-ab}}{=} \nu^{-1/2}. \]
A pair of such vertices contributes $\nu^{-1}$ since $\nu_{-t} = \nu$. In the second item, the factors $a^{1/2}$ cancel the twisting of the associator $\Phi^{\sqrt{a}}$. In the third item, relating the two sides of \eqref{pic:move-two} through a uni-trivalent spine gives
\[( a^{1/2}\cdot \nu^{-1/2}\otimes \nu^{-1/2} )^{-1} \cdot r_3(\Delta_{1} b^{1/2} \cdot b^{1/2} \otimes 1) , \]
which simplifies to $a^{-1}$.
\end{proof}

\subsection{Loose ends}
The basic building block we used in Proposition \ref{prop:building-block} was the non-deformed algebra $C^\infty(G, \mathbb{C}) \in \Umod{\g \oplus \bar{\g}}$. To relate the deformation quantization via factorization homology to the quantization presented here, we note that the role of $\bar{\g}$ can be understood as taking the opposite braiding; compare with \cite[Prop.~7.19]{KKMP}.\footnote{This interpretation of $a$ was suggested to me by P. Safronov.}
\begin{proposition}
	The identity functor 
	\[ \Umodnp{\bar{\g}} \to {\Umodnp{\g}}^\text{br. op} \]
	is a braided monoidal equivalence, with the monoidal structure given by $a$
\end{proposition}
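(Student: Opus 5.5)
We have to show that the identity functor, together with the monoidal structure $J_{X,Y}$ given by the action of $a$, is a braided monoidal functor $\Umodnp{\bar{\g}} \to (\Umodnp{\g})^{\text{br.\,op}}$. Being the identity on objects and morphisms, it is then automatically an equivalence. We must fix the convention for $a$ acting on $X\otimes Y$: the weight $w_\g(a)\in (U\g)^{\otimes 2}[[\hbar]]$ is taken with respect to $t$. Strand switching $a^{21}=a$ from Proposition \ref{prop:properties-of-a-and-b} shows that no ordering choice is needed. There are three things to check: the hexagon/associativity compatibility, compatibility with units, and compatibility with braidings.

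\textbf{Associators.} In $\Umodnp{\bar{\g}}$ the associator is the weight of $\Phi$ computed with $-t$. By definition of the weight, this equals $w_\g(\Phi_{-t})$, because each chord carries one factor of $\hbar t$. In the target category (the braided opposite has the same monoidal structure), the associator is $w_\g(\Phi)$. Up to the ordering conventions for the composition of monoidal constraints, the coherence condition for $J=a$ reads
\[ \Delta_2 a \cdot (1\otimes a)\cdot \Phi_{-t} = \Phi \cdot (a\otimes 1)\cdot \Delta_1 a , \]
or equivalently $\Phi^{a^{-1}}=\Phi_{-t}$ (or $\Phi^{a}=\Phi_{-t}$, depending on the direction). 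This is exactly Proposition \ref{prop:twist-by-phi}, $\Phi^a=\Phi_{-t}$. Applying $(\cdot)_{-t}$ and using $a_{-t}=b$ together with $ab=\Delta\nu\cdot\nu^{-1}\otimes\nu^{-1}$, so that $b$ and $a^{-1}$ differ by a twist that acts trivially by Proposition \ref{prop:insert}, also gives $\Phi^{a^{-1}}=\Phi_{-t}$. Whichever direction the convention demands, the condition therefore holds. Unitality follows from $\varepsilon_1(a)=\varepsilon_2(a)=1$.

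\textbf{Braiding.} The braiding of $\Umodnp{\bar{\g}}$ is $\sigma\circ e^{-\hbar t/2}$. The braiding of $(\Umodnp{\g})^{\text{br.\,op}}$ is $\beta^{-1}_{Y,X}=e^{-\hbar t/2}\circ\sigma$, which equals $\sigma\circ e^{-\hbar t/2}$ because $t$ is symmetric. The two braidings therefore coincide, and the braided-functor condition $J_{Y,X}\circ\beta=\beta'\circ J_{X,Y}$ reduces to $a^{21}\cdot e^{-t/2}=e^{-t/2}\cdot a$. This holds since $a^{21}=a$ and $t_{12}$ is central in $\mathcal P_2$ (Proposition \ref{prop:centerPn}).

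\textbf{Main obstacle.} The only real content is the associator compatibility. The main risk there is getting the conventions consistent: the order of composition of monoidal constraints (see the footnote in the proof of Proposition \ref{prop:reduction-and-opposite-fusion}), the opposite multiplication in $\mathcal P_n$, and whether $a$ or $a^{-1}$ is the correct constraint. I would first fix conventions by checking the identity in chord degree $\le 2$, and then invoke Proposition \ref{prop:twist-by-phi} for the full statement.
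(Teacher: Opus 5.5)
Your strategy is the paper's own: its proof is just the pointer to Proposition~\ref{prop:twist-by-phi}. Your unit check ($\varepsilon_i(a)=1$) and braiding check (both braidings equal $\sigma\circ e^{-\hbar t/2}$, and $a^{21}=a$ with $t_{12}$ central) are correct.

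The gap is in the associator step, exactly where you hedge. The claim ``whichever direction the convention demands, the condition holds'' is false. Your displayed equation rearranges to $\Phi^{a^{-1}}=\Phi_{-t}$; this uses that $\Delta_2 a$ commutes with $1\otimes a$, and $\Delta_1 a$ with $a\otimes 1$, by Proposition~\ref{prop:insert}. Your derivation of this identity does not work. Since $(\cdot)_{-t}$ is an algebra automorphism commuting with the $\Delta_i$, applying it to $\Phi^a=\Phi_{-t}$ yields $(\Phi_{-t})^{b}=\Phi$. After discarding the central coboundary $\Delta\nu\cdot\nu^{-1}\otimes\nu^{-1}$, this reads $(\Phi_{-t})^{a^{-1}}=\Phi$. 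Undoing the twist gives $\Phi^a=\Phi_{-t}$ again, not a new identity.

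In fact $\Phi^{a^{-1}}=\Phi_{-t}$ fails for non-even associators:
\begin{itemize}
\item Since $\Phi-1$ and $a^{\pm1}-1$ start in chord degree $2$, the degree-$3$ part of $\Phi^{F}$ for $F=a^{\pm1}$ is $\Phi_3+dF_3$, where $df=\Delta_2 f+1\otimes f-f\otimes 1-\Delta_1 f$.
\item Proposition~\ref{prop:twist-by-phi} forces $da_3=-2\Phi_3$.
\item Hence $\Phi^{a^{-1}}$ has degree-$3$ part $\Phi_3-da_3=3\Phi_3$, while that of $\Phi_{-t}$ is $-\Phi_3$. These differ whenever $\Phi_3\neq 0$, e.g.\ for the KZ associator.
\end{itemize}
This also shows your plan of fixing conventions in chord degree $\le 2$ cannot succeed. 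There $da_2=0$, so $\Phi^{a}$, $\Phi^{a^{-1}}$ and $\Phi_{-t}$ all agree.

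So the convention must actually be fixed, after which the proof is immediate. Take the structure map $J_{X,Y}\colon F(X)\otimes F(Y)\to F(X\otimes Y)$ acting by the $t$-weight of $a$. It goes from the tensor product of $\Umodnp{\g}$ (associator $w_\g(\Phi)$) to that of $\Umodnp{\bar\g}$ (associator $w_{\bar\g}(\Phi)=w_\g(\Phi_{-t})$). Read products in $\mathcal P_3$ as compositions, with the left factor applied last. The coherence condition is then
\[ \Phi_{-t}\cdot\Delta_1 a\cdot(a\otimes 1)=\Delta_2 a\cdot(1\otimes a)\cdot\Phi , \]
which is literally $\Phi^a=\Phi_{-t}$.

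Equivalently, you may take the structure map in the opposite direction, acting by the $\bar\g$-weight of $a$, i.e.\ by $w_\g(b)$. This differs from $w_\g(a)^{-1}$ only by the coboundary of $\nu$. The other two combinations (opposite direction with $w_\g(a)$, or this direction with $w_\g(b)$) reduce to $\Phi^{a^{-1}}=\Phi_{-t}$ and therefore fail.
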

\begin{proof}
Follows from Proposition \ref{prop:twist-by-phi}.
\end{proof}
For quantum groups, an analogous statement is that the bar involution \cite[Sec.~3.11]{Lusztig:QG} reverses braiding \cite[Sec.~8.2]{Gaitsgory:KZ}.

For an even associator, some of the above constructions simplify using \eqref{eq:ab-even-associator}; the twisted fusion, however, still differs from the untwisted one. 
\begin{proposition}
Let $\Phi$ be an even associator. Then the deformation quantization of a disk with two vertices marked $+$ is given by $K^{\text{tw}}_{-t} = e^{t/2}$, and for two vertices marked $-$ it is given by $K^{\text{tw}}=e^{-t/2}$.  The isomorphisms from flips are given by the action of $\Phi$.
\end{proposition}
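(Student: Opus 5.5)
The plan is to substitute the even-associator identity \eqref{eq:ab-even-associator} into the formulas already derived for the twisted quantization. I will do this in three separate places: the disk with two $-$ points, the disk with two $+$ points, and the flip isomorphism.

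\emph{Two points marked $-$.} Start from \eqref{eq:K-twisted}, $K^{\text{tw}} = b \cdot e^{-t/2} \cdot \nu^{1/2}\otimes\nu^{1/2}\cdot \Delta\nu^{-1/2}$. For an even associator, \eqref{eq:ab-even-associator} gives $b = \Delta\nu^{1/2}\cdot \nu^{-1/2}\otimes\nu^{-1/2}$. The needed commutation facts are:
\begin{itemize}
\item $\Delta\nu^{\pm1/2}$ is central in $\mathcal P_2$ by Proposition \ref{prop:centerPn};
\item $\nu^{\pm 1/2}\otimes\nu^{\pm1/2}$ is central by the $r=1$ case of Proposition \ref{prop:insert};
\item $e^{-t/2}$ is central by Proposition \ref{prop:centerPn}.
\end{itemize}
So all factors may be rearranged freely, and the $\nu$ terms cancel pairwise, leaving $K^{\text{tw}} = e^{-t/2}$. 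The only bookkeeping to watch is that the real powers \eqref{eq:power} of commuting elements multiply as expected. This holds because all these elements lie in a commutative subalgebra generated by central elements.

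\emph{Two points marked $+$.} The quantization is $K^{\text{tw}}_{-t}$, obtained by applying the minus-chords operation, which is an algebra automorphism. Under it, $\nu$ is fixed (as used in Corollary \ref{cor:LBS-isomorphisms}; $\nu$ can be computed with an even associator), and $e^{-t/2}\mapsto e^{t/2}$. Hence $K^{\text{tw}}_{-t} = e^{t/2}$. Alternatively, I can use the explicit expression $a\cdot e^{t/2}\cdot\Delta\nu^{-1/2}\cdot\nu^{1/2}\otimes\nu^{1/2}$ recorded in the proof of Theorem \ref{thm:unique-quantizations} together with $a=\Delta\nu^{1/2}\cdot\nu^{-1/2}\otimes\nu^{-1/2}$, and obtain the same result.

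\emph{Flips.} By Theorem \ref{thm:flips-isomorphisms}, the flip acts by $\Phi^{\sqrt a}$. For an even associator, $\sqrt a = \Delta\nu^{1/4}\cdot\nu^{-1/4}\otimes\nu^{-1/4}$, which is of the form $\Delta\mu\cdot\mu^{-1}\otimes\mu^{-1}$ with $\mu=\nu^{1/4}$. To justify taking this square root, note that $a$ equals this expression squared and both sides are $1$ plus higher-order terms in commuting central elements. By the second claim of Proposition \ref{prop:twist-by-phi}, a twist of this form leaves the associator unchanged, using Proposition \ref{prop:insert} to slide $\Delta$'s of $\mu$ through $\Phi$. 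Therefore $\Phi^{\sqrt a}=\Phi$.

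The main point needing care is the square-root step: checking that $(\Delta\nu^{1/2}\cdot\nu^{-1/2}\otimes\nu^{-1/2})^{1/2}$ is computed factorwise. This follows from commutativity of the factors and uniqueness of square roots of the form $1+\dots$ in the completed algebra.
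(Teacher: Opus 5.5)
Your proposal is correct and is essentially the paper's own argument. The paper justifies this proposition only by substituting \eqref{eq:ab-even-associator} into \eqref{eq:K-twisted} and its $-t$ version, and by remarking in the proof of Theorem \ref{thm:flips-isomorphisms} that $\sqrt{a}$ is then a trivial twist, so $\Phi^{\sqrt a}=\Phi$.

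One small precision in your twist step: the factors $1\otimes\Delta\mu^{\pm1}$ and $\Delta\mu^{\pm1}\otimes 1$ are not central in $\mathcal P_3$, so they cannot be slid through $\Phi$. Instead they cancel inside $\Delta_2F\cdot(1\otimes F)=\Delta^2\mu\cdot\mu^{-1}\otimes\mu^{-1}\otimes\mu^{-1}$, and similarly on the other side. Only these resulting central products need to pass through $\Phi$.
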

For an even associator, \cite[Last item below Prop.~3]{LBS:DQ} also note that the undeformed product of $C^\infty(G,\mathbb{C})$ is associative also in $\Umod{\g^{\oplus 2}}$ and $\Umod{\bar{\g}^{\oplus 2}}$. This undeformed product is isomorphic to the one given by $e^{t/2}$ by the action of the square root of the ribbon element $e^{t_{11}/4}$.

\section{Traces on moduli spaces.}
A natural definition of a trace of an algebra in a braided monoidal category should use the braiding or twist to formulate the cyclic property of the trace. In the present case, the correct notion turns out to be the so-called half-twist \cite{SnyderTingley:half-twist}
\begin{equation}\label{eq:half-twist}
	\tau_{A, A} \circ \Delta e^{\hbar t_{11}/4} =  \tau_{A, A} \circ e^{\hbar t_{12}/2} \circ  e^{\hbar t_{11}/4} \otimes  e^{\hbar t_{11}/4}  \colon A \otimes A \to A\otimes A
\end{equation}
Here $\tau_{A, A} \colon A \otimes A \to A \otimes A$ is the map $a\otimes a' \mapsto a' \otimes a$.
\begin{definition}
Let $(A, m)\in \Umodnp{\g}$ be an associative algebra. A trace on $A$ is a morphism $\operatorname{tr}\colon A \to \mathbb C$ (i.e.\ to the monoidal unit) in $\Umodnp{\g}$ which satisfies one of the following equivalent conditions
\begin{align}
\operatorname{tr}\circ m \circ \tau_{A, A} \circ \Delta e^{\hbar t_{11}/4}  &=  \operatorname{tr}\circ m  \label{tr:cyclic1}\\
\operatorname{tr}\circ m \circ \beta_{A, A} &= \operatorname{tr}\circ m \circ e^{-\hbar t_{11}/4} \otimes e^{-\hbar t_{11} /4} \label{tr:cyclic2} \\
\operatorname{tr}\circ m \circ \tau_{A,A}&=  \operatorname{tr}\circ m \label{tr:cyclic3}
\end{align}
\end{definition}
\begin{proposition}
	The three properties \eqref{tr:cyclic1}, \eqref{tr:cyclic2} and \eqref{tr:cyclic3} are equivalent.
\end{proposition}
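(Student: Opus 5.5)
The whole argument rests on two facts: $\operatorname{tr}$ is a morphism to the trivial representation, and $m$ is a $\g$-module map. Let $C = t_{11}$ denote the Casimir acting on a single module. The plan is to show that all three conditions are statements about $\operatorname{tr}\circ m$ precomposed with maps that differ only by invertible invariant factors.

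\textbf{Step 1: Casimir factors cancel against the trace.} Since $m\colon A\otimes A\to A$ is a morphism in $\Umodnp{\g}$, it intertwines the diagonal action. Hence $m\circ \Delta(e^{\hbar t_{11}/4}) = e^{\hbar t_{11}/4}\circ m$, where $\Delta(e^{\hbar t_{11}/4})$ is the Casimir exponential of the diagonal action on $A\otimes A$. Since $\operatorname{tr}$ is invariant, $\operatorname{tr}\circ e^{\hbar t_{11}/4} = \operatorname{tr}$: on the target $\mathbb C$ the Casimir acts by zero, and $\operatorname{tr}$ intertwines. Therefore
\[ \operatorname{tr}\circ m\circ \Delta e^{\hbar t_{11}/4} = \operatorname{tr}\circ m. \]

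\textbf{Step 2: (\ref{tr:cyclic1}) is equivalent to (\ref{tr:cyclic3}).} The diagonal Casimir commutes with $\tau_{A,A}$, because $\Delta(C)$ is symmetric under the flip. So $\tau\circ\Delta e^{\hbar t_{11}/4} = \Delta e^{\hbar t_{11}/4}\circ\tau$. Also $m\circ\tau$ is still an intertwiner, since $\tau$ is $\g$-equivariant. Applying Step 1 to $m\circ\tau$ in place of $m$ gives
\[ \operatorname{tr}\circ m\circ\tau\circ\Delta e^{\hbar t_{11}/4} = \operatorname{tr}\circ m\circ\tau, \]
which yields the equivalence.

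\textbf{Step 3: (\ref{tr:cyclic2}) is equivalent to (\ref{tr:cyclic1}).} Write $\beta_{A,A} = \tau\circ e^{\hbar t_{12}/2}$. Combining this with the identity $\Delta(C) = C\otimes 1 + 1\otimes C + 2t_{12}$ in \eqref{eq:half-twist}, all of these commute. This gives
\[ \tau\circ\Delta e^{\hbar t_{11}/4} = \beta_{A,A}\circ (e^{\hbar t_{11}/4}\otimes e^{\hbar t_{11}/4}). \]
So (\ref{tr:cyclic1}) reads
\[ \operatorname{tr}\circ m\circ \beta_{A,A}\circ (e^{\hbar t_{11}/4}\otimes e^{\hbar t_{11}/4}) = \operatorname{tr}\circ m. \]
Composing on the right with the invertible map $e^{-\hbar t_{11}/4}\otimes e^{-\hbar t_{11}/4}$ gives exactly (\ref{tr:cyclic2}), and the step reverses.

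\textbf{Main obstacle.} The only delicate point is justifying the step $\operatorname{tr}\circ e^{\hbar t_{11}/4} = \operatorname{tr}$, that is, that an invariant functional kills the Casimir. This requires $\operatorname{tr}$ to be a genuine morphism to the monoidal unit with trivial action, and the statement should be applied in the $\hbar$-adically completed setting. Beyond that, what remains is checking that the exponentials combine. This holds because $t_{12}$, $t_{11}\otimes 1$ and $1\otimes t_{11}$ commute in $(U\g)^{\otimes 2}$, by Proposition \ref{prop:centerPn} on the chord-diagram level.
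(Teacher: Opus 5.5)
Your proof is correct and is essentially the paper's argument. The equivalence of \eqref{tr:cyclic1} and \eqref{tr:cyclic2} comes from splitting $\Delta e^{\hbar t_{11}/4}$ into commuting factors via $\Delta t_{11} = t_{11}\otimes 1 + 1\otimes t_{11} + 2t_{12}$. The equivalence of \eqref{tr:cyclic1} and \eqref{tr:cyclic3} holds because $e^{\hbar t_{11}/4}$ is compatible with the equivariant map $\operatorname{tr}\circ m\circ\tau_{A,A}$ and acts trivially on the unit; your Steps 1--2 simply unpack the paper's ``natural automorphism of the identity functor'' remark into the equivariance of $m$ and of $\operatorname{tr}$ separately.
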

\begin{proof}
The equivalence of the first two equations comes from 
\[ \Delta{t_{11}} =  t_{11}\otimes 1 + 1\otimes t_{11} + 2 t_{12}.\]
The last equation is equivalent to the first one since $e^{\hbar t_{11}/4}$ is a $\g$-invariant element of $U\g$, i.e.\ it is a natural automorphism of the identity functor on $\Umodnp{\g}$, and thus it commutes with $\operatorname{tr}\circ m \circ \tau_{A, A}$. On the monoidal unit $\mathbb C$ it acts as the identity.
\end{proof}
\begin{example}
Let $G$ be a compact Lie group and $\mu$ a normalized bi-invariant Haar measure. Then 
\[ f \mapsto \int_G f \mu  \]
is a trace on the undeformed algebra $C^{\infty}(G, \mathbb{C})\in \Umod{\g\oplus \bar{\g}}$. We recall that all compact Lie groups are unimodular, i.e.\ they possess a bi-invariant Haar measure.
\end{example}

\begin{proposition} \label{prop:trace-and-fusion}
Let $(A, m)$ be an algebra in $\Umod{\g\oplus \g\oplus \h}$ and let $\operatorname{tr}$ be a trace on $A$. Then the same morphism $\operatorname{tr}\colon T(A) \to \mathbb C$ is a trace on the fusion of $A$. The same holds for the twisted fusion.
\end{proposition}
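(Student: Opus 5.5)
We verify condition \eqref{tr:cyclic3} for the fused product $m_{T(A)} = m\circ J$, where $J$ acts through the two $\g$-actions to be fused (labelled $1,2$) on each tensor factor. Two facts are used throughout: $\operatorname{tr}$ is a morphism to the monoidal unit, so it is $\g\oplus\g\oplus\h$-invariant, and $m$ is a morphism in $\Umod{\g\oplus\g\oplus\h}$. Hence $\operatorname{tr}\circ m$ kills the action of any element $e_{X}+e_{Y}$, for $e$ in either copy of $\g$ acting diagonally on $A\otimes A$. Equivalently, in chord-diagram language, $\operatorname{tr}\circ m$ vanishes on the image of the operation of Proposition \ref{prop:action-on-invariants}. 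We therefore aim to show that
\[ \operatorname{tr}\circ m\circ J\circ \tau_{A,A} = \operatorname{tr}\circ m\circ J. \]
Note that $\operatorname{tr}\circ m\circ\tau_{A,A}=\operatorname{tr}\circ m$ holds by assumption, so $\tau$ may be inserted or removed freely in front of $\operatorname{tr}\circ m$.

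\textbf{Step 1: reduce to an identity on invariants.} Write $J\circ\tau = \tau\circ J^{3412}$. Using the hypothesis, the claim becomes $\operatorname{tr}\circ m\circ J^{3412} = \operatorname{tr}\circ m\circ J$. Since $\operatorname{tr}\circ m$ is $\g$-invariant for both diagonal actions, both sides may be evaluated after composing with the invariant-projection. As in the proof of Proposition \ref{prop:reduction-and-opposite-fusion}, we use the operations $r_i$ (Proposition \ref{prop:operation-r-on-chord-diagrams}) to eliminate the strands $Y_1,Y_2$ in favour of reversed, cabled copies of $X_1,X_2$. This turns $J$ and $J^{3412}$ into tangles closed up by caps.

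\textbf{Step 2: compare the closed tangles.} After this closing, $J$ and $J^{3412}$ should become isotopic tangles, up to normalization factors. To see this, rotate using Proposition \ref{prop:reflections} part 1: we already know $J^{4321}=J$. Combining this with the strand switching symmetry of caps (as in the proof of strand switching for $a,b$ in Proposition \ref{prop:properties-of-a-and-b}), the closure makes the swap $X\leftrightarrow Y$ an isotopy. The leftover contributions are $a$, $b$ or $\nu$ factors, which are central in the relevant sense (Proposition \ref{prop:centerPn}) or symmetric, using \eqref{eq:Sab}. Any crossing $e^{\pm t/2}$ produced by the swap is central and symmetric under $\tau$, so it cancels.

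\textbf{Step 3: the twisted case.} $J^{\mathrm{tw}}$ differs from $J$ by the factors $a^{-1/2}\otimes a^{-1/2}$ below and $(\Delta_1\Delta_2 a^{1/2})^{1324}$ above. The top factor is the action of a doubled element on the fused $\g$, i.e.\ it acts on $A\otimes A$ through $\Delta$ of the diagonal action. Since $\operatorname{tr}$ is invariant, $\operatorname{tr}\circ m$ composed with it gives $\varepsilon(a^{1/2})$-type terms, which equal $1$ by strand removal. The bottom factor $a^{-1/2}\otimes a^{-1/2}$ is symmetric under $\tau$. Using naturality of $\tau$, it commutes past $\tau$ and reduces the twisted claim to the untwisted one.

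The main obstacle is Step 2: verifying that the closed tangle coming from $J^{3412}$ is honestly isotopic to that from $J$, while keeping track of the unnormalized cap/cup contributions. If this fails, the fallback is to work in the semi-strict picture where both sides are natural transformations, and argue directly with the braided-category meaning of $J$ (the monoidal structure on restriction along the diagonal). Fusion of a trace then corresponds to the standard fact that braided-invariant pairings are preserved.
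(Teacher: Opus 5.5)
Your Step 1 matches the paper, and your Step 3 is sound. Step 2, however, carries the entire untwisted case and is only a list of ingredients. Its guiding claim, that after closing $J$ and $J^{3412}$ become isotopic tangles up to normalizations, is left unverified, and it is not quite the right formulation.

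Start by making the elimination precise. Since $\tr$ is invariant and $m$ is a module map, $\tr\circ m\circ(e,0,0,0)=\tr\circ m\circ(0,0,-e,0)$, and likewise for the second copy of $\g$. Hence
\[ \tr\circ m\circ(w_1\otimes w_2\otimes w_3\otimes w_4)=\tr\circ m\circ\bigl(S(w_3)w_1\otimes S(w_4)w_2\otimes 1\otimes 1\bigr). \]
This is the dual of the operations $r_i$, not an application of them. Nothing is projected to invariants; it is the functional that is invariant, so the reversed $Y$-strands are attached \emph{above} the $X$-strands by caps. The consequences are:
\begin{itemize}
\item applied to $J$, this gives a single element $L=S(J_{(3)})J_{(1)}\otimes S(J_{(4)})J_{(2)}$;
\item applied to $J^{3412}$, it gives $(S\otimes S)(L)$, i.e.\ the same closed tangle with both components reversed;
\item applied to $J^{2143}$, which equals $J^{3412}$ because $J^{4321}=J$, it gives $L^{21}$.
\end{itemize}
So the claim reduces to a symmetry of one chord diagram, $S_1S_2(L)=L$ (equivalently $L^{21}=L$). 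No such symmetry can be read off before you know what $L$ is.

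The missing step is that computation. The top endpoints of the fusion tangle are $X_1Y_1X_2Y_2$, so the closure consists of two caps over the crossing of the middle strands. This is isotopic to a doubled cap with one crossing, which gives $L=R^{-1}a$. From there you can finish in either of two ways:
\begin{itemize}
\item $S_1S_2(L)=S_1S_2(a)\,S_1S_2(R^{-1})=aR^{-1}=R^{-1}a$, by \eqref{eq:Sab} and centrality of $R$; this is the paper's route;
\item $L^{21}=R^{-1}a^{21}=R^{-1}a$, by strand switching in Proposition~\ref{prop:properties-of-a-and-b}; this is the route your mention of $J^{4321}=J$ and strand switching points to.
\end{itemize}
Note that the crossing does not ``cancel'': it survives in $L$, and its centrality is needed only to reorder $aR^{-1}$.

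Once the untwisted identity is established as an equality of maps $A\otimes A\to\mathbb C$, your Step 3 is correct. It is also more economical than the paper's recomputation of the twisted closure as $L^{\mathrm{tw}}=R^{-1}$. The top factor of $J^{\mathrm{tw}}$ is absorbed because $\tr$ is invariant, $m$ is a module map and $a^{1/2}$ has constant term $1$. The bottom factor $a^{-1/2}\otimes a^{-1/2}$ commutes with $\tau_{A,A}$. The twisted identity therefore follows by precomposing the untwisted one.
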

\begin{proof}Let us start with the untwisted fusion. We want to show that
	\[ \tr \circ m \circ J_A \circ \tau_{A, A}  = \tr \circ m \circ J_A. \]
	Exchanging the permutation and $J_A$ on the LHS, we get
	\[\tr \circ m \circ J_A \circ \tau_{A, A} = \tr \circ m \circ \tau_{A, A} \circ (J^{3, 4, 1,2})_A  = \tr \circ m \circ (J^{3,4,1,2})_A\,. \]
	The second equality comes from the fact that $\tr$ is a trace. Thus, we need to prove that\footnote{This is not true without the trace and the multiplication, as $J = 1 + \hbar t_{23} + \dots$, while $J^{3,4,1,2} = 1 + \hbar t_{14} + \dots$.}
	\[\tr \circ m \circ (J^{3,4,1,2})_A =  \tr \circ m \circ J_A.\]
	Since $\tr$ is a morphism from a $\g\oplus \g$-representation $A$ to the trivial representation, it satisfies $\tr \circ (e, e') = 0$, where $(e,e')$ denotes the action of $e, e'\in \g$. 
	The product $m \colon A\otimes A \to A$ is a $U(\g \oplus \g)$-map, i.e. 
	\[ (e, e') \circ m = m \circ (e, e', e, e') .\] 
	Thus, we can move the actions between the first and third slot as
	\[ \tr \circ m \circ (e, 0, 0, 0) =  \underbrace{\tr \circ (e, 0) \circ m}_{=0}  - \tr \circ m \circ (0, 0, e, 0) =\tr \circ m \circ (0, 0, -e, 0), \] 
	and similarly
	\[ \tr \circ m \circ (0, e, 0, 0)= \tr \circ m \circ (0, 0, 0, -e). \]
	Applying this repeatedly, one obtains, for $w_i \in U\g$,  	
	\[ \tr \circ m \circ (w_1\otimes w_2 \otimes w_3 \otimes w_4) = \tr \circ m \circ (S(w_3)w_1 \otimes S(w_4)w_2 \otimes 1 \otimes 1),  \]
	where $S$ is the antipode of $U\g$.
	Applying this to $J= J_{(1)}\otimes J_{(2)} \otimes J_{(3)} \otimes J_{(4)}$, one gets the action of $L := S(J_{(3)})J_{(1)} \otimes S(J_{(4)})J_{(2)}$ on the first and second ``slot'' of $A\otimes A$. For $J^{3, 4, 1, 2}= J_{(3)}\otimes J_{(4)} \otimes J_{(1)} \otimes J_{(2)}$, one obtains the action of $S(J_{(1)})J_{(3)} \otimes S(J_{(2)})J_{(4)}$ at the same slots. Thus, we need that $L \in U\g \otimes U\g$ satisfies $L = S\otimes S(L)$. 
	
	Fortunately, we can compute $L$ easily using the Kontsevich integral: it comes from the following chord diagram
	\[
	Z \left( \raisebox{-0.45\height}{\begin{overpic}{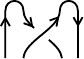} 
	\end{overpic}
	} \right)= Z \left( \raisebox{-0.42\height}{\begin{overpic}{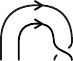} 
	\end{overpic}
	}  \right) = R^{-1}a,
	\]
	where $R^{-1}$ is for the undercrossing and $a$ for the doubled cup. Both of these elements are invariant under $S_{1}S_{2}$ \eqref{eq:Sab}, which is the chord-diagrammatic analogue of $S\otimes S$ . Finally,  $R^{-1}$ is central and we can finish the proof in the untwisted case
	\[ S_{1}S_{2}(L) = S_{1}S_{2}(R^{{-1}}a) = aR^{-1}  = L.\]
	\smallskip

	For the case of the twisted fusion, the term $a^{1/2}$ on the top vanishes due to Proposition \ref{prop:action-on-invariants}, while the two $a^{-1/2}$ combine to $a^{-1}$, i.e.\ the twisted version of the chord diagram $L$ is equal to $L^\text{tw}= R^{-1}$.
	
\end{proof}
Explicitly, applying Proposition \ref{prop:trace-and-fusion} repeatedly tells us that the morphism $\tr \circ m$ can be simplified to the action of a cabling of $a$ according to the number and parenthesization of half-edges at a vertex, followed by $e^{\hbar \sum_{i<j}t_{ij}/2}$, the braiding which reverses the order of strands. In the case of twisted fusion, only the braiding remains. 
\smallskip

\begin{proposition}
	Let $(A, m)$ be an algebra in $\Umod{\g\oplus \h}$ and let $\operatorname{tr}$ be a trace on $A$. Then the same morphism is a trace on the algebra $A^\g$ in $\Umodnp{\h}$.
	\end{proposition}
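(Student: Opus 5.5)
We have to show that the restriction of $\tr$ to $A^\g$ is a morphism in $\Umodnp{\h}$ and that it satisfies one of the equivalent cyclicity conditions. The plan is to use \eqref{tr:cyclic3}, since it involves only the flip $\tau$ and no Casimir factors.

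First we check that $\tr|_{A^\g}\colon A^\g\to\mathbb C$ is a morphism. It is $\h$-equivariant because $\tr$ is $\g\oplus\h$-equivariant and the $\h$-action on $A^\g$ is the restriction of the one on $A$. It is also compatible with the $\hbar$-linear structure trivially.

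Next we identify the product. By Definition~\ref{def:fusionreduction}, the multiplication on $A^\g$ is $m$ precomposed with the lax monoidal structure, i.e.\ the inclusion $\iota\colon A^\g\otimes A^\g\hookrightarrow (A\otimes A)^\g$. So $m_{A^\g}=m\circ\iota$. The flip $\tau_{A,A}$ restricts along $\iota$ to $\tau_{A^\g,A^\g}$, because $\tau$ is just the linear swap and preserves the subspace $A^\g\otimes A^\g$. Hence
\[\tr\circ m_{A^\g}\circ\tau_{A^\g,A^\g}=\tr\circ m\circ\tau_{A,A}\circ\iota=\tr\circ m\circ\iota=\tr\circ m_{A^\g},\]
where the middle equality is \eqref{tr:cyclic3} for $A$.

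The only point requiring care is that the equivalence of \eqref{tr:cyclic1}--\eqref{tr:cyclic3} is being applied in two different categories: $\Umod{\g\oplus\h}$ for $A$, where the half-twist involves the Casimir of $\g\oplus\h$, and $\Umodnp{\h}$ for $A^\g$, where it involves only that of $\h$. We avoid comparing these directly by working throughout with \eqref{tr:cyclic3}. Its equivalence with the other two conditions was proven for any Casimir Lie algebra, so it applies in each category separately. As a consistency check, the $\g$-part of $e^{\hbar t_{11}/4}$ acts trivially on $\g$-invariants, which is compatible with this route. I do not expect a genuine obstacle beyond making this bookkeeping explicit.
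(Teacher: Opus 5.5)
Your proof is correct and follows the same route as the paper, which simply notes that the cyclic property \eqref{tr:cyclic3} for $A$ restricts to $A^\g$. Your extra checks make explicit what the paper leaves implicit: the $\h$-equivariance of the restricted trace, the identification of the product on $A^\g$ as $m\circ\iota$, and the fact that the flip preserves $A^\g\otimes A^\g$.
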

	\begin{proof}
	Follows immediately from the from the cyclic property \eqref{tr:cyclic3}: if it holds on $A$, it also holds when restricted to $A^\g$.
	\end{proof}

\begin{proposition}
	The isomorphisms $\Psi_{\Gamma \to \Gamma'}$ from Theorem \ref{thm:flips-isomorphisms} 
	intertwine traces. The same is true for the isomorphisms between the untwisted quantizations described in Corollary \ref{cor:LBS-isomorphisms}.
\end{proposition}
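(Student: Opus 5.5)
The statement reduces to showing that each elementary isomorphism acts trivially after composing with the trace, i.e.\ $\tr\circ\Psi=\tr$. Indeed, each $\Psi_{\Gamma\to\Gamma'}$ is a composite of flips, and each isomorphism in Corollary \ref{cor:LBS-isomorphisms} is a composite of the three listed elementary moves. All of these are given by the action of the weight of some $\g$-invariant chord diagram $D\in\mathcal P_k$: $\Phi^{\sqrt a}$, $\Phi$, $\nu^{-1}$ or $a^{-1}$. The action is taken on $k$ of the $\g$-actions at half-edges meeting at one vertex. I would therefore prove a single lemma: the trace on $A_\Gamma$ vanishes on the action of $D-\varepsilon(D)$, where $\varepsilon$ removes all strands. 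Since every $D$ above has constant term $1$, this yields $\tr\circ D=\tr$.

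To prove the lemma, I would use that the trace $\tr$ on $A_\Gamma$ is the same underlying functional as the Haar trace on the building block $C^\infty(G^N)$, by Proposition \ref{prop:trace-and-fusion} and the subsequent reduction statement. That functional is $\int_{G^N}$ with respect to the bi-invariant Haar measure. It is therefore annihilated by every left- and right-invariant vector field, i.e.\ $\tr\circ(e)_{v}=0$ for any single action $(e)_v$ at a half-edge $v$. A chord diagram $D$ with at least one chord acts through a sum of products $w_1\otimes\dots\otimes w_k$, where each $w_i$ is a product of Lie algebra elements acting at half-edge $i$ and at least one $w_i$ has positive degree. Each such term is a composition of single vector field actions; the composition order is irrelevant since actions at different half-edges commute. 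Hence the leftmost factor is a vector field killed by the integral, and every term with at least one chord is annihilated by $\tr$. This gives the lemma, and with it the compatibility of $\tr$ with $\Phi$, $\Phi^{\sqrt a}$, $a^{-1}$ and $\nu^{-1}$.

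The main obstacle is bookkeeping. One must check that the vector fields by which $\Psi$ acts really are left/right invariant vector fields on the building block $G^N$, rather than some combination that only makes sense on invariants. This holds because each half-edge action is the $L$ or $R$ action on one copy of $G$, and diagonal actions at a fused vertex are sums of these. One must also check that, after the reductions, the identification $C^\infty(M_{\Sigma,V})\cong C^\infty(G^N)^{\g^{\mathrm{red}}}$ is compatible with integration. This is clear because restricting the trace to invariants does not change the functional. For the untwisted Corollary \ref{cor:LBS-isomorphisms}, the same lemma applies to $\nu^{-1}$ and $a^{-1}$ acting at the relevant strands; the twisting isomorphisms $a^{1/2}$ are likewise trace-preserving, so the composite systems intertwine traces as well.
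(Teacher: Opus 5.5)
Your lemma is correct, and it is the first step of the paper's proof. Every isomorphism acts by a chord diagram $1+(\text{terms with chords})$ through left/right-invariant vector fields, which the bi-invariant Haar integral annihilates, so $\tr\circ\Psi=\tr\circ\Psi_{\hbar=0}$.

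The gap is that you then treat $\Psi_{\hbar=0}$ as the identity on a single algebra with a single trace. For a Li-Bland--Ševera spine $\tilde\Gamma$, the trace is the Haar integral on $G^N$, restricted to $C^\infty(G^N)^{\g^{\mathrm{red}}}$ and transported to $C^\infty(M_{\Sigma,V}(G))$ by the identification that $\tilde\Gamma$ determines. Two moves in Corollary \ref{cor:LBS-isomorphisms} change the number $N$ of building blocks and this identification: removing two bivalent vertices, and the move \eqref{pic:move-two}. So $\Psi$ maps between algebras whose traces are a priori different functionals, and what you must show is $\tr_{\tilde\Gamma'}\circ\Psi=\tr_{\tilde\Gamma}$. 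Your remark that restricting to invariants does not change the functional is true, but it does not compare these two functionals.

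The same issue is hidden in the flips. Theorem \ref{thm:flips-isomorphisms} realizes a flip as a reparenthesization inside a tree of a suitably chosen lift $\tilde\Gamma$. So to speak of ``the'' trace on $A_\Gamma$ for a uni-trivalent $\Gamma$ at all, you need it to be independent of the lift.

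The missing step is the classical one the paper carries out: the Haar traces are independent of the Li-Bland--Ševera spine, checked on the two moves of Theorem \ref{thm:unique-quantizations}. Adding two bivalent vertices adds two $G$-factors, and the invariant functions are pulled back along a product of holonomies. Integrating out the new factors returns the old trace only because $\int_G\mu=1$. The move \eqref{pic:move-two} is analogous.

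You can see that your argument is incomplete because it never uses this normalization. With a Haar measure of total volume $\mathrm{vol}(G)\neq 1$, every step of your proof goes through verbatim. Yet the traces attached to two spines differing by a pair of bivalent vertices would then differ by a factor $\mathrm{vol}(G)^{2}$, and $\Psi$ would not intertwine them.
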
	
\begin{proof}
	Since the isomorphisms between star products always act by chord diagrams of the form \[1+\text{terms with chords},\] the composition $\operatorname{tr}\circ \Psi$ is equal to $\operatorname{tr}\circ \Psi_{\hbar = 0}$. Thus, we only need to check that the trace is compatible with the isomorphisms $G^{\text{edges}{(\Gamma)}}/G^{\text{internal vertices}(\Gamma)} \cong M_{\Sigma, V}(G)$, i.e.\ that the Haar measures induce the same trace on $C^{\infty}(M_{\Sigma, V}(G), \mathbb C)$, independently of $\Gamma$. 
	
Let us show that the trace doesn't depend on the Li-Bland--Ševera spine. We can again check the two moves from Theorem \ref{thm:unique-quantizations}; which change the number of internal edges and vertices. For example, adding two bivalent vertices increases the number of $G$-factors by $2$; the integration over these factors gives $1$ thanks to the normalization of the Haar measure. A similar argument holds for the move \eqref{pic:move-two}.
\end{proof}
To summarize, we get a well-defined trace on the deformation quantization of the moduli space of flat connections, in the sense that it's independent of the Li-Bland--Ševera or uni-trivalent spine.
We expect these traces to be closely related to the traces of Roche and Szenes \cite{RocheSzenes02}.

\printbibliography

@Article{Goldman86,
  author   = {Goldman, William M.},
  journal  = {Inventiones mathematicae},
  title    = {Invariant functions on {Lie} groups and {Hamiltonian} flows of surface group representations},
  year     = {1986},
  issn     = {1432-1297},
  month    = jun,
  number   = {2},
  pages    = {263--302},
  volume   = {85},
  doi      = {10.1007/BF01389091},
}

@misc{Gaiotto:Kq-algebras,
      title={$K_{\mathfrak{q}}$-algebras}, 
      author={Davide Gaiotto},
      year={2026},
      eprint={2609.38702},
      archivePrefix={arXiv},
      primaryClass={math.QA},
      url={https://arxiv.org/abs/2609.38702}, 
}

@article{SnyderTingley:half-twist,
  title = {The half-twist for
                    <i>U</i>
                    <sub>
                      <i>q</i>
                    </sub>
                    (g) representations},
  volume = {3},
  ISSN = {1937-0652},
  url = {http://dx.doi.org/10.2140/ant.2009.3.809},
  DOI = {10.2140/ant.2009.3.809},
  number = {7},
  journal = {Algebra \& Number Theory},
  publisher = {Mathematical Sciences Publishers},
  author = {Snyder,  Noah and Tingley,  Peter},
  year = {2009},
  month = Nov,
  pages = {809–834},
      eprint={0810.0084},
      archivePrefix={arXiv},
      primaryClass={math.QA},
}

@incollection{BarNatan:braids,
  booktitle = {The Interface of Knots and Physics},
  title = {Vassiliev and Quantum Invariants of Braids},
  author = {Bar-Natan, Dror},
  ISBN = {9780821892664},
  ISSN = {0160-7634},
  DOI = {10.1090/psapm/051},
  journal = {Proceedings of Symposia in Applied Mathematics},
  publisher = {American Mathematical Society},
  pages = {129-144}, 
  volume = {51},
  year = {1996}
}

@Article{EnriquezEtingof,
  author        = {Enriquez, Benjamin and Etingof, Pavel},
  journal       = {American Mathematical Society Translations: Series 2},
  title         = {Quantization of Alekseev-Meinrenken dynamical $r$-matrices},
  year          = {2003},
  issn          = {2472-3193},
  month         = {Sep},
  pages         = {81–98},
  archiveprefix = {arXiv},
  doi           = {10.1090/trans2/210/06},
  eprint        = {math/0302067},
  isbn          = {9781470434212},
  primaryclass  = {math.QA},
  publisher     = {American Mathematical Society},
}

@phdthesis{Jan:thesis,
  doi = {10.13097/ARCHIVE-OUVERTE/UNIGE:154218},
  author = {Pulmann,  Ján},
  title = {On Quantization of Moduli Spaces and Poisson-Hopf Algebras},
  publisher = {Université de Genève},
  year = {2021},
  copyright = {Free access}
}

@article{KKMP,
  title = {Deformation quantization via categorical factorization homology},
  volume = {484},
  ISSN = {0001-8708},
  url = {http://dx.doi.org/10.1016/j.aim.2025.110708},
  DOI = {10.1016/j.aim.2025.110708},
  journal = {Advances in Mathematics},
  publisher = {Elsevier BV},
  author = {Karlsson,  Eilind and Keller,  Corina and M\"{u}ller,  Lukas and Pulmann,  Ján},
  year = {2026},
  month = Jan,
  pages = {110708},
      eprint={2410.12516},
      archivePrefix={arXiv},
      primaryClass={math.QA},
}

@Article{LBS:coherent,
  author        = {Li-Bland, David and \v{S}evera, Pavol},
  journal       = {Mathematical Research Letters},
  title         = {Coherent quantization using coloured surfaces},
  year          = {2019},
  issn          = {1945-001X},
  number        = {3},
  pages         = {805–850},
  volume        = {26},
  archiveprefix = {arXiv},
  doi           = {10.4310/mrl.2019.v26.n3.a7},
  eprint        = {1510.05615},
  primaryclass  = {math.SG},
  publisher     = {International Press of Boston},
}

@Book{CDM,
  author    = {Chmutov, Sergei and Duzhin, Sergei and Mostovoy, Jacob},
  publisher = {Cambridge University Press},
  title     = {Introduction to {Vassiliev} {Knot} {Invariants}},
  year      = {2012},
  isbn      = {9781107020832},
  month     = jul,
}

@Article{LeMurakami:parallel,
  author  = {Le, Thang T. Q. and Murakami, Jun},
  journal = {Journal of Pure and Applied Algebra},
  title   = {Parallel version of the universal {Vassiliev}-{Kontsevich} invariant},
  year    = {1997},
  issn    = {0022-4049},
  month   = oct,
  number  = {3},
  pages   = {271--291},
  volume  = {121},
  doi     = {10.1016/S0022-4049(96)00054-0},
}

@Article{BarNatan:vassiliev,
  author  = {Bar-Natan, Dror},
  journal = {Topology},
  title   = {On the {Vassiliev} knot invariants},
  year    = {1995},
  issn    = {0040-9383},
  month   = apr,
  number  = {2},
  pages   = {423--472},
  volume  = {34},
  doi     = {10.1016/0040-9383(95)93237-2},
  url     = {https://www.math.toronto.edu/~drorbn/papers/OnVassiliev/},
}

@Article{DrinfeldGal,
  author   = {Drinfeld, Vladimir G.},
  journal  = {Algebra i Analiz},
  title    = {On quasitriangular quasi-{Hopf} algebras and on a group that is closely connected with {$\mathrm{Gal}(\overline{\mathbb {Q}}/\mathbb{Q})$}},
  year     = {1990},
  issn     = {0234-0852},
  note     = {(\textit{translation in} Leningrad Mathematical Journal, 1991, 2:4, 829–860)},
  number   = {4},
  pages    = {149--181},
  volume   = {2},
  mrnumber = {1080203},
}

@Article{BarNatanOnAssociators,
  author   = {Bar-Natan, Dror},
  journal  = {Selecta Mathematica},
  title    = {On associators and the {Grothendieck}-{Teichmuller} group, {I}},
  year     = {1998},
  issn     = {1420-9020},
  month    = jun,
  number   = {2},
  pages    = {183},
  volume   = {4},
  doi      = {10.1007/s000290050029},
}

@Article{LeMurakami:universal,
  author  = {Le, Thang T. Q. and Murakami, Jun},
  journal = {Compositio Mathematica},
  title   = {The universal {Vassiliev}-{Kontsevich} invariant for framed oriented links},
  year    = {1996},
  number  = {1},
  pages   = {41--64},
  volume  = {102},
}

@Book{Ohtsuki2001,
  author    = {Ohtsuki, Tomotada},
  publisher = {WORLD SCIENTIFIC},
  title     = {Quantum Invariants},
  year      = {2001},
  doi       = {10.1142/4746},
}

@article{HabiroMassuyeau:handlebodies,
  title = {The Kontsevich integral for bottom tangles in handlebodies},
  volume = {12},
  ISSN = {1664-073X},
  DOI = {10.4171/qt/155},
  number = {4},
  journal = {Quantum Topology},
  publisher = {European Mathematical Society - EMS - Publishing House GmbH},
  author = {Habiro,  Kazuo and Massuyeau,  Gwénaël},
  year = {2021},
  month = Sept,
  pages = {593–703},
      eprint={1702.00830},
      archivePrefix={arXiv},
      primaryClass={math.GT},
}

@article{Gaitsgory:KZ,
  title = {A Conjectural Extension of the Kazhdan–Lusztig Equivalence},
  volume = {57},
  ISSN = {1663-4926},
  DOI = {10.4171/prims/57-3-14},
  number = {3},
  journal = {Publications of the Research Institute for Mathematical Sciences},
  publisher = {European Mathematical Society - EMS - Publishing House GmbH},
  author = {Gaitsgory,  Dennis},
  year = {2021},
  month = Oct,
  pages = {1227–1376},
      eprint={1810.09054},
      archivePrefix={arXiv},
      primaryClass={math.RT},
}

@book{Lusztig:QG,
  title = {Introduction to Quantum Groups},
  ISBN = {9780817647179},
  url = {http://dx.doi.org/10.1007/978-0-8176-4717-9},
  DOI = {10.1007/978-0-8176-4717-9},
  publisher = {Birkh\"{a}user Boston},
  author = {Lusztig,  George},
  year = {2010}
}

@article{Dancso:trivalent,
  title = {On the Kontsevich integral for knotted trivalent graphs},
  volume = {10},
  ISSN = {1472-2747},
  DOI = {10.2140/agt.2010.10.1317},
  number = {3},
  journal = {Algebraic \& Geometric Topology},
  publisher = {Mathematical Sciences Publishers},
  author = {Dancso,  Zsuzsanna},
  year = {2010},
  month = June,
  pages = {1317–1365},
      eprint={0811.4615},
      archivePrefix={arXiv},
      primaryClass={math.GT},
}

@Article{Andersen1998,
  author    = {Andersen, Jørgen E. and Mattes, Josef and Reshetikhin, Nicolai},
  journal   = {Mathematical Proceedings of the Cambridge Philosophical Society},
  title     = {Quantization of the algebra of chord diagrams},
  year      = {1998},
  issn      = {1469-8064, 0305-0041},
  month     = nov,
  number    = {3},
  pages     = {451--467},
  volume    = {124},
  doi       = {10.1017/S0305004198002813},
  publisher = {Cambridge University Press},
}

@Article{Drinfeld:quasi-hopf-algebras,
  author   = {Drinfeld, Vladimir G.},
  journal  = {Algebra i Analiz},
  title    = {Quasi-{Hopf} algebras},
  year     = {1989},
  issn     = {0234-0852},
  note     = {(\textit{translation in} Leningrad Mathematical Journal, 1990, 1:6, 1419–1457)},
  number   = {6},
  pages    = {114--148},
  volume   = {1},
  mrnumber = {1047964},
  url      = {https://mathscinet.ams.org/mathscinet-getitem?mr=1047964},
}

@Article{LMMO,
  author    = {Le, Thang T. Q. and Murakami, Hitoshi and Murakami, Jun and Ohtsuki, Tomotada},
  journal   = {Osaka Journal of Mathematics},
  title     = {A three-manifold invariant via the {Kontsevich} integral},
  year      = {1999},
  issn      = {0030-6126},
  month     = jan,
  number    = {2},
  pages     = {365--395},
  volume    = {36},
  publisher = {Osaka University and Osaka City University, Departments of Mathematics},
}

@Book{Penner:DTT,
  author    = {Penner, Robert C.},
  publisher = {European Mathematical Society},
  title     = {Decorated Teichm{\"u}ller theory},
  year      = {2012},
  isbn      = {978-3-03719-075-3},
  series    = {The QGM Master Class Series},
  volume    = {1},
}

@Article{FockRosly,
  author    = {Fock, Vladimir V. and Rosly, Alexei A.},
  journal   = {Translations of the American Mathematical Society-Series 2},
  title     = {Poisson structure on moduli of flat connections on {Riemann} surfaces and r-matrix},
  year      = {1999},
  month     = feb,
  note      = {arXiv: math/9802054},
  pages     = {67--86},
  volume    = {191},
  publisher = {Providence [etc.] American Mathematical Society, 1949-},
}

@Article{AKSM,
  author    = {Alekseev, Anton and Kosmann-Schwarzbach, Yvette. and Meinrenken, Eckhard},
  journal   = {Canadian Journal of Mathematics},
  title     = {Quasi-{Poisson} {Manifolds}},
  year      = {2002},
  issn      = {0008-414X, 1496-4279},
  month     = feb,
  number    = {1},
  pages     = {3--29},
  volume    = {54},
  doi       = {10.4153/CJM-2002-001-5},
  publisher = {Cambridge University Press},
}

@Article{AKS,
  author    = {Alekseev, Anton and Kosmann-Schwarzbach, Yvette},
  journal   = {Journal of Differential geometry},
  title     = {Manin pairs and moment maps},
  year      = {2000},
  number    = {1},
  pages     = {133--165},
  volume    = {56},
  publisher = {Lehigh University},
}

@Article{AtiyahBott,
  author    = {Atiyah, Michael F. and Bott, Raoul},
  journal   = {Philosophical Transactions of the Royal Society of London. Series A, Mathematical and Physical Sciences},
  title     = {The {Yang}-{Mills} {Equations} over {Riemann} {Surfaces}},
  year      = {1983},
  issn      = {0080-4614},
  number    = {1505},
  pages     = {523--615},
  volume    = {308},
  publisher = {The Royal Society},
  url       = {https://www.jstor.org/stable/37156},
}

@inbook{LBS:symplectic-poisson,
  title = {Symplectic and Poisson Geometry of the Moduli Spaces of Flat Connections Over Quilted Surfaces},
  ISBN = {9783319099491},
  ISSN = {2352-3905},
  DOI = {10.1007/978-3-319-09949-1_11},
  booktitle = {Mathematical Aspects of Quantum Field Theories},
  publisher = {Springer International Publishing},
  author = {Li-Bland,  David and Ševera,  Pavol},
  year = {2015},
  pages = {343–411},
      archivePrefix={arXiv},
      primaryClass={math.DG},
      url={https://arxiv.org/abs/1304.0737}, 
}

@Article{LBS:poisson,
  author        = {Li-Bland, David and {\v{S}}evera, Pavol},
  journal       = {Documenta Mathematica},
  title         = {Moduli Spaces for Quilted Surfaces and Poisson Structures},
  year          = {2015},
  pages         = {1071--1135},
  volume        = {20},
  archiveprefix = {arXiv},
  eprint        = {1212.2097},
  primaryclass  = {math.DG},
}

@Article{Cartier,
  author   = {Cartier, Pierre},
  journal  = {Les rencontres physiciens-mathématiciens de Strasbourg -RCP25},
  title    = {Construction combinatoire des invariants de {Vassiliev}-{Kontsevich} des nœuds},
  year     = {1993},
  pages    = {1--10},
  volume   = {45},
  language = {fr},
  url      = {http://www.numdam.org/item/RCP25_1993__45__1_0/},
}

@Article{LBS:DQ,
  author        = {Li-Bland, David and \v{S}evera, Pavol},
  journal       = {International Mathematics Research Notices},
  title         = {On Deformation Quantization of Poisson–Lie Groups and Moduli Spaces of Flat Connections},
  year          = {2014},
  issn          = {1687-0247},
  month         = {Sep},
  number        = {15},
  pages         = {6734 - 6751},
  volume        = {2015},
  archiveprefix = {arXiv},
  doi           = {10.1093/imrn/rnu154},
  eprint        = {1307.2047},
  primaryclass  = {math.QA},
  publisher     = {Oxford University Press},
}

@Article{KontsevichVassiliev,
  author  = {Kontsevich, Maxim},
  journal = {Adv. in Sov. Math},
  title   = {Vassiliev’s knot invariants},
  year    = {1993},
  number  = {2},
  pages   = {137--150},
  volume  = {16},
}

@Article{AMM,
  author        = {Alekseev, Anton and Malkin, Anton and Meinrenken, Eckhard},
  journal       = {Journal of Differential Geometry},
  title         = {Lie group valued moment maps},
  year          = {1998},
  issn          = {0022-040X},
  month         = {Jan},
  number        = {3},
  volume        = {48},
  archiveprefix = {arXiv},
  doi           = {10.4310/jdg/1214460860},
  eprint        = {dg-ga/9707021},
  primaryclass  = {dg-ga},
  publisher     = {International Press of Boston},
}

@Article{BZBJ:integrating,
  author        = {Ben‐Zvi, David and Brochier, Adrien and Jordan, David},
  journal       = {Journal of Topology},
  title         = {Integrating quantum groups over surfaces},
  year          = {2018},
  issn          = {1753-8424},
  month         = {Aug},
  number        = {4},
  pages         = {874–917},
  volume        = {11},
  archiveprefix = {arXiv},
  doi           = {10.1112/topo.12072},
  eprint        = {1501.04652},
  primaryclass  = {math.QA},
  publisher     = {Wiley},
}

@Article{AGS95,
  author    = {Alekseev, Anton and Grosse, Harald and Schomerus, Volker},
  journal   = {Communications in Mathematical Physics},
  title     = {Combinatorial quantization of the Hamiltonian Chern-Simons theory I},
  year      = {1995},
  number    = {2},
  pages     = {317--358},
  volume    = {172},
  publisher = {Springer},
}

@Article{RocheSzenes02,
  author    = {Roche, Philippe and Szenes, András},
  journal   = {Advances in Mathematics},
  title     = {Trace Functionals on Noncommutative Deformations of Moduli Spaces of Flat Connections},
  year      = {2002},
  issn      = {0001-8708},
  month     = {Jun},
  number    = {2},
  pages     = {133–192},
  volume    = {168},
  doi       = {10.1006/aima.2001.2045},
  publisher = {Elsevier BV},
}

\end{document}